\newtheorem{theo}{Theorem}[section]
\newtheorem{coro}[theo]{Corollary}
\newtheorem{lemm}[theo]{Lemma}
\newtheorem{prop}[theo]{Proposition}
\newtheorem{defi}[theo]{Definition}
\newtheorem{rema}[theo]{Remark}
\newtheorem{exam}[theo]{Example}
\newenvironment{proof}{\noindent \textbf{{Proof.}} \sf}
\def\qed{\hfill $\diamond$ \bigskip}
\newcommand\CC{{\mathbb{C}}}
\def\C{{\mathcal C}}
\def\lim{\mathop{\rm lim}\nolimits}
\def\End{\mathop{\sf End}\nolimits}
\def\JJ{\mathsf J}
\def\CC{\mathsf C}
\def\DD{\mathsf D}
\def\HH{\mathsf H}
\def\AA{\mathbb A}
\def\MM{\mathbb M}
\def\KK{\mathsf K}
\def\ZZ{\mathsf Z}
\def\dd{\mathsf d}
\def\Ext{\mathsf{Ext}}
\def\Hom{\mathsf{Hom}}
\def\Tor{\mathsf{Tor}}
\def\Ker{\mathsf{Ker}}
\def\Coker{\mathsf{Coker}}
\def\coker{\mathop{\rm Coker}\nolimits}
\begin{document}
\sf

\title{Hochschild cohomology of algebras arising from categories and from bounded quivers}
\author{Claude Cibils,  Marcelo Lanzilotta, Eduardo N. Marcos,\\and Andrea Solotar
\thanks{\footnotesize This work has been supported by the projects  UBACYT 20020130100533BA, PIP-CONICET 11220150100483CO, USP-COFECUB, and MATHAMSUD-REPHOMOL.
The third mentioned author was supported by the thematic project of FAPESP 2014/09310-5. The fourth mentioned author is a research member of CONICET (Argentina).}}

\date{}
\maketitle
\begin{abstract}
The main objective of this paper is to provide a theory for computing the Hochschild cohomology of algebras arising from a linear category with finitely many objects and zero compositions. For this purpose, we consider such a category using an ad hoc quiver $Q$,  with an algebra associated to each vertex and a bimodule to each arrow. The computation relies on cohomological functors that we introduce, and on the combinatorics of the quiver. One point extensions are occurrences of this situation, and Happel's long exact sequence is a particular case of the long exact sequence of cohomology that we obtain via the study of trajectories of the quiver. We introduce cohomology along paths, and we compute it under suitable Tor vanishing hypotheses. The cup product on Hochschild cohomology enables us to describe the connecting homomorphism of the long exact sequence.

Algebras arising from a linear category where the quiver is the round trip one, provide square matrix algebras which have two algebras on the diagonal and two bimodules on the corners.
If the bimodules are projective, we show that five-terms exact sequences arise. If the bimodules are free of rank one, we provide a complete computation of the Hochschild cohomology. On the other hand, if the corner bimodules are projective without producing new cycles, Hochschild cohomology in large enough degrees is that of the product of the algebras on the diagonal.

As a by-product, we obtain some families of bound quiver algebras which are of infinite global dimension, and have
Hochschild cohomology zero in large enough degrees.

\end{abstract}

\noindent 2010 MSC: 16E40, 16E30, 18G15

\noindent \textbf{Keywords:} Hochschild, cohomology, square algebras, quiver, five-term exact sequence.

\normalsize

\section{\sf Introduction}

Hochschild cohomology of an algebra over a field is an interesting and not fully understood tool, see for instance \cite{AVRAMOVBUCHWEITZERDMANNLODAYWHITHERSPPON}. It has been defined in 1945 by Hochschild in \cite{HOCHSCHILD1945}, it provides the theory of infinitesimal deformations and the deformation theory on the variety of algebras of a fixed dimension, see \cite{GERSTENHABER1964}.
Moreover it is a Gerstenhaber algebra, and it is related with the representation theory of the given algebra, see for example \cite{ASSEMREDONDOSCHIFFLER,BUSTAMANTE}. Rephrasing the introduction of \cite{KOENIGNAGASE}, observe that Hochschild cohomology is not functorial, there is no natural way to relate the Hochschild cohomology of an algebra to that of its quotient algebras or of its subalgebras. In exchange, the idea is to find a way of relating the cohomology of an algebra to that of an easier or smaller algebra. Several articles go in this direction, starting with the work of Happel in \cite{HAPPEL}, see also \cite{BELIGIANNISJPAA, BUCHW, CMRS2002, GREENMARCOSSNASHALL, GREENSOLBERG, HERMANN, KOENIGNAGASE, MICHELENAPLATZEC}.

In this paper we mainly consider associative algebras over a field $k$, not necessarily finite dimensional, which arise from a $k$-category with finitely many objects and zero compositions.   These algebras are cleft singular extensions $\Lambda = A\oplus M$ - see  \cite[p. 284]{MACLANE},  in addition they are provided with a complete set $E\subset A$ of orthogonal idempotents which are not necessarily primitive and $M$ is an $A$-bimodule verifying $M^2=0$. This setting provides an ad hoc quiver as follows: the vertices are the idempotents of $E$, and there is an arrow from a vertex $x$ to a different one $y$ in case $yMx$ is not zero. This quiver coincides with the Peirce $E$-quiver of the algebra $\Lambda$, see Definition \ref{Peircequiver}.

We develop tools to compute the Hochschild cohomology of $\Lambda$, in relation to cohomological functors that we introduce. They are related to this specific quiver, to the algebras and to the bimodules involved.

The structure that we sketch below is equivalent to a $k$-category, nevertheless our work is based on the combinatorics and the information which it conveys. Recall that a quiver $Q$ is simply laced (see for instance \cite{KIRICHENKOPLA}) if it has neither double arrows nor loops. For a simply laced quiver $Q$, we consider a $Q$-set $\Delta$, which consists on a collection of algebras $\{A_x\}$ associated to each vertex $x$, a set of bimodules $\{M_a\}$ associated to each arrow $a$, and a family of bimodule maps $\alpha$ verifying  associativity constraints. To such a $Q$-set, we associate a $k$-algebra $\Lambda_\Delta$, which is in fact isomorphic to the $k$-algebra arising from the $k$-category determined by the $Q$-set.  If the bimodule maps of $\alpha$ are zero, then $\Lambda_\Delta$ is called a $k$-algebra arising from a $k$-category with zero compositions. In this case, $\Lambda_\Delta$ is a cleft singular extension as before.

A one point extension is an occurrence of a $k$-algebra arising from a $k$-category with zero compositions, it is built on the simply laced quiver which has just an arrow.  In Section \ref{section arising} we  make precise the definitions and we provide other examples of algebras arising from categories with zero compositions.

In Section \ref{HHarising} we consider a $k$-algebra arising from a $k$-category, built on a simply laced quiver $Q$ and a $Q$-set. We analyze  the complex of cochains computing the Hochschild cohomology of the algebra, relative to the subalgebra given by the vertices of $Q$. The main tool that we introduce are the trajectories over $Q$: a trajectory is an oriented path of $Q$ provided with non negative integers at each vertex, that we call waiting times. The duration of the trajectory is the sum of the waiting times plus the length of the path. The cochains of the complex decompose along the trajectories, and we describe the coboundary with respect to this decomposition.

In Section \ref{alongexactseq}, we focalize on algebras arising from $k$-categories with zero compositions. Each non cycle $\delta$ of $Q$ provides a subcomplex of the previous complex given in each degree by the trajectories over $\delta$ whose duration equals the degree. We obtain this way a short exact sequence of complexes, such that  the quotient complex decomposes as a direct sum along cycles of $Q$; moreover this decomposition is based on trajectories over cycles of $Q$.
The above sketched analysis shows that for any path $\omega$ of $Q$, it is natural to define a cohomology theory $\HH^\bullet_\omega(\Delta)$ along $\omega$, based on the $Q$-set $\Delta$. We infer a cohomology long exact sequence from the short exact sequence, which makes use of the cohomology theory along paths of $Q$.

Section \ref{HHalongpaths} mostly concerns the computation of the cohomology along paths of a $Q$-set. If the path is a vertex $x$, trajectories over it are just waiting times on $x$, and the cohomology along $x$ is the Hochschild cohomology of the algebra $A_x$. A main result is that the cohomology along an arrow $a$ is isomorphic to the self extension of $M_a$ with itself, with a shift of one in the degrees. This already shows that the long exact sequence that we  obtain coincides with the long exact sequence of Happel for one point extensions \cite{HAPPEL}, as well as with its generalization for corner algebras obtained independently in \cite{CIBILS2000}, \cite{MICHELENAPLATZEC} and \cite{GREENSOLBERG}. In order to go further in the computation of the cohomology  along paths, Tor vanishing hypotheses are needed. More precisely, let $\omega$ be a path of length two: if Tor between the bimodules of the $Q$-set is zero in positive degrees, then $\HH^\bullet_\omega(\Delta)$ is an Ext functor, shifted by two in the degree. We provide a generalization of this result for paths of higher length. Observe that the Tor vanishing conditions that we require in order to compute the cohomology along paths resemble the ones required for an ideal to be stratified \cite{CLINEPARSHALLSCOTT,KOENIGNAGASE}, as well as the hypotheses used recently in \cite{HERMANN}.

In Section \ref{multiplicative} we describe the connecting homomorphism $\nabla$ of the long exact sequence using the multiplicative structure which is involved. We show that cohomology along paths has a cup product which is compatible with composition of paths. There is a canonical element in the cohomology along paths, that is the sum of the identity maps as endomorphisms of each bimodule, which provides a $1$-cocycle in the sum of the cohomologies along arrows. This enables to describe $\nabla$ as the graded commutator with this canonical element - this way we recover a result in \cite{CMRS2002} as a particular case. In this process we reobtain part of the results of \cite{GREENMARCOSSNASHALL}, for instance that for one point extensions the connecting morphism is a graded algebra map.

In Section \ref{square} we specialize our results to square algebras, namely algebras built on the round trip quiver $Q=\cdot\rightleftarrows\cdot$  provided with a $Q$-set. If the  bimodules associated to the arrows - that is the corner bimodules of the $2\times 2$ matrix algebra - are projective, and if the bimodule maps are zero, the algebra is called a null-square projective algebra. For these algebras we show that the cohomology long exact sequence  splits into five-term exact sequences. Using this fact, we provide explicit formulas for the Hochschild cohomology of a null-square projective algebra $\Lambda$, in terms of the Hochschild cohomology of the algebras at the vertices of $Q$ - that is the algebras on the diagonal - and the kernel and cokernel in even degrees of  the non trivial part $\nabla'$ of $\nabla$. As a consequence, the Hochschild cohomology of the algebras of the $Q$-set is included in the Hochschild cohomology of $\Lambda$.  Moreover the inclusion is canonical  in even degrees while  in odd degrees it is obtained by choosing a splitting of a canonical short exact sequence.

Next we prove in Section \ref{square} that a square algebra which corner bimodules are free of rank one is necessarily a null-square algebra. In other words, the family of bimodule maps is zero for a $Q$-set on the round trip quiver $Q$,  if the bimodules associated to the arrows are free of rank one. We  show that for such a $k$-algebra, $\nabla'$ is injective in even positive degrees. In the finite dimensional case, this leads to explicit formulas for the dimension of the Hochschild cohomology. We also describe a $k$-algebra of this sort as a bound quiver algebra, whenever the algebras on the diagonal are provided as bound quiver algebras.

In Section \ref{Peirce} we consider once more null-square algebras, but focussing on an opposite family to the one studied in Section  \ref{square}. We consider corner projective bimodules provided by some pairs of idempotents belonging to complete systems of orthogonal idempotents of the algebras which are on the diagonal. This leads to a combinatorial data encoded in the Peirce quiver, which conveys enough information for Hochschild cohomology computations.  We prove that if the corner projective bimodules do not produce new oriented cycles in the Peirce quiver then, in large enough degrees, the Hochschild cohomology of the null-square algebra coincides with that of the diagonal algebra. This is different from the results of Section  \ref{square}, indeed the free rank one corner bimodules produce plenty of new cycles in the Peirce quiver.

In Section \ref{examples} we specialize our results to consider examples such as toupie algebras. We also provide
families of
 finite dimensional algebras of infinite global dimension which Hochschild cohomology vector spaces are zero
for large enough degrees. The first example of such a $k$-algebra is provided
 in \cite{BUCHWGREENMADSENSOLBERG}, which gives an answer to a remark by D. Happel in \cite{HAPPEL}.

To end this Introduction, we point out a possible follow-up to our work. In \cite{SNASHALLSOLBERG} the authors introduced a theory of support varieties for modules over any Artin algebra $A$. They consider the Hochschild cohomology ring divided by the ideal generated by the homogeneous nilpotent elements. Observe that the odd degree elements have square zero, so they are nilpotent. This ring is commutative. Whenever it is finitely generated, its spectrum  defines  a variety. The support variety of a module over it is defined. We believe that our results can be used to compare the associated rings  of the algebras involved in our work.
For instance  Theorem \ref{imagem} shows that for a $k$-algebra arising from a $k$-category, the image of the considered map
is included in the ideal generated by the nilpotent elements of the Hochschild cohomology. Also, from Corollary \ref{prodofcoho} we  infer that for $\Lambda=\left(
                                                                                                          \begin{array}{cc}
                                                                                                            A & N \\
                                                                                                            M & B \\
                                                                                                          \end{array}
                                                                                                        \right)$
a null-square projective algebra, with  $\AA=A\times B$ and $\MM=M\oplus N$,  if  there is a positive integer $h$ such that $\MM^{\otimes_\AA h}=0$,
then the previous commutative rings associated to $\Lambda$ and to $\AA$ are isomorphic in large enough degrees.

\section{\sf Algebras arising from $k$-categories with finitely many objects}\label{section arising}

A \emph{$k$-category} is a small category $\C$ enhanced over the category of $k$-vector spaces. In more detail, let $\C_0$ be the set of objects of $\C$; the set of morphisms from $x\in \C_0$ to $y\in \C_0$ is a $k$-vector space  denoted by $_y\C_x$. The composition of $\C$ is  $k$-bilinear, and for all $x\in \C_0$ the image of the canonical inclusion $k \hookrightarrow  {}_x\C_x$ is central in $_x\C_x$. The vector space ${}_x\C_x$ is endorsed with a $k$-algebra structure, while for $x,y\in \C_0$ the vector space $_y\C_x$ is a ${}_y\C_y - {}_x\C_x$ bimodule. In \cite{MITCHELL} a $k$-category is called an ``algebra with several objects".

\begin{defi}
Let $\C$ be a $k$-category with finitely many objects. The \emph{$k$-algebra arising from $\C$} is $a(\C) = \displaystyle{\oplus_{x,y \in \C_0}\  _y\C_x}$, with product given by the composition of $\C$ combined with the matrix multiplication.
\end{defi}

We observe that the unit of $a(\C)$ is the sum of the identities at each object. Moreover $A= \displaystyle{\oplus_{x \in \C_0}\  _x\C_x}$ is a subalgebra, and $M=\displaystyle{\oplus_{x\neq y \in \C_0}\  _y\C_x}$ is an $A$-bimodule.

\begin{defi}
A $k$-category $\C$ has \emph{zero compositions} if for any three different objects $x$, $y$ and $z$ we have $_z\C_y\  _y\C_x=0$.
\end{defi}

Observe in this case $M$ is a two-sided ideal of $a(\C)$.

\begin{defi}
A \emph{system} $E$ of a $k$-algebra $\Lambda$ is a finite set of orthogonal idempotents of $\Lambda$ which is \emph{complete}, namely $\sum_{x\in E}x=1$.
\end{defi}

Note  that for a $k$-category $\C$, the set of identities $E=\{1_x \mid x\in\C_0\}$ is a system of the $k$-algebra $a(\C)$.

\begin{lemm}\label{oneone}
There is a one-to-one correspondence between $k$-categories with finitely many objects and $k$-algebras equipped with a system.
\end{lemm}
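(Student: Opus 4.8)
The plan is to exhibit two constructions, one in each direction, and check they are mutually inverse. Given a $k$-category $\C$ with finitely many objects, Lemma~\ref{oneone} should send it to the pair $(a(\C), E)$ where $E = \{1_x \mid x \in \C_0\}$; the preceding remark already observes that $E$ is a system of $a(\C)$. Conversely, given a $k$-algebra $\Lambda$ equipped with a system $E$, I would build a $k$-category $\C = \C(\Lambda, E)$ whose set of objects is $\C_0 = E$, and whose morphism space from $x$ to $y$ is the Peirce component ${}_y\C_x := y\Lambda x$. Composition is inherited from the multiplication of $\Lambda$, restricted to the appropriate Peirce components: the product of $y\Lambda x$ and $x\Lambda z$ lands in $y\Lambda z$ because the idempotents are orthogonal. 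One must check the axioms of a $k$-category: bilinearity of composition is immediate from bilinearity of multiplication in $\Lambda$; the identity morphism $1_x \in {}_x\C_x = x\Lambda x$ is the idempotent $x$ itself, which acts as a two-sided unit on each Peirce component since $y(yux)=yux$ and $(yux)x = yux$; centrality of the image of $k \hookrightarrow x\Lambda x$ holds because scalars are central in $\Lambda$, hence in any subalgebra such as $x\Lambda x$.

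**The two constructions are mutually inverse.**
Starting from $\C$, forming $a(\C)$, choosing the system $E = \{1_x\}$, and then forming $\C(a(\C), E)$: the objects are $\{1_x\}$, in natural bijection with $\C_0$, and the morphism space $1_y\, a(\C)\, 1_x$ is exactly the summand ${}_y\C_x$ of $a(\C) = \oplus_{x,y} {}_y\C_x$, with composition the restriction of the matrix-combined product, which is precisely the original composition of $\C$. Conversely, starting from $(\Lambda, E)$, forming $\C(\Lambda, E)$, and then forming $a(\C(\Lambda, E)) = \oplus_{x,y \in E} y\Lambda x$: the Peirce decomposition $\Lambda = \oplus_{x,y \in E} y\Lambda x$ associated to the complete set $E$ of orthogonal idempotents shows this direct sum is canonically identified with $\Lambda$ as a vector space, and the product of $a(\C(\Lambda,E))$ — composition combined with matrix multiplication — coincides under this identification with the multiplication of $\Lambda$, since $(\sum_{x,y} y\lambda x)(\sum_{z,w} w\mu z) = \sum \lambda\mu$ reassembled Peirce-component-wise. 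The system $E$ is recovered as the set of identity morphisms. One should also remark that morphisms of $k$-categories correspond to morphisms of $k$-algebras respecting the systems, so the correspondence is actually functorial, although the statement as given only claims a bijection on objects.

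**Main obstacle and bookkeeping.**
The constructions themselves are routine; the only real care needed is the Peirce decomposition bookkeeping, i.e.\ verifying that for a complete set $E$ of orthogonal idempotents one has the internal direct sum decomposition $\Lambda = \bigoplus_{x,y \in E} y\Lambda x$ and that multiplication respects the grading in the expected way (the product $y\Lambda x \cdot x'\Lambda z$ vanishes unless $x = x'$, in which case it lands in $y\Lambda z$). This is where orthogonality and completeness are both used: completeness gives $\lambda = 1 \cdot \lambda \cdot 1 = \sum_{x,y} y\lambda x$, so the components span; orthogonality gives that the components with distinct index pairs are linearly independent and that cross-products vanish. Once this is in place, checking that the round-trips are the identity is a matter of tracking which Peirce component each element lives in, and the $k$-category axioms for $\C(\Lambda,E)$ reduce to the corresponding algebra identities in $\Lambda$. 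I do not expect any genuine difficulty; the lemma is essentially a reformulation lemma, and the proof is a careful unwinding of definitions. \qed
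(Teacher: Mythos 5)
Your proposal is correct and follows exactly the same construction as the paper's own (much terser) proof: send $\C$ to $(a(\C), \{1_x\})$ and send $(\Lambda,E)$ to the category with objects $E$ and morphism spaces $y\Lambda x$, with composition given by multiplication. The additional verifications you carry out (the $k$-category axioms and the Peirce-decomposition bookkeeping showing the two constructions are mutually inverse) are precisely the routine details the paper leaves implicit.
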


\begin{proof}
To a $k$-category $\C$, we associate its algebra $a(\C)$ with the above mentioned system. Conversely, to a $k$-algebra $\Lambda$ with a system $E$, we associate the $k$-category which set of objects is $E$ and which vector space of morphisms from $x$ to $y$ is $y\Lambda x$. The composition of the $k$-category is given by the product of $\Lambda$.\qed
\end{proof}

We recall that a finite \emph{quiver} $Q$ is a finite set $Q_0$ of vertices, a finite set $Q_1$ of arrows, and two maps $s$ and $t$ from $Q_1$ to $Q_0$ called source and target. In this paper we will only deal with finite quivers. It will be useful to consider the following quiver of a $k$-algebra equipped with a system.

\begin{defi}\label{Peircequiver}
Let $\Lambda$ be a $k$-algebra with a system $E$. The \emph{Peirce $E$-quiver} of $\Lambda$ has set of vertices $E$. For $x\neq y\in E$, there is an arrow from $x$ to $y$ if  $y\Lambda x\neq 0$.
\end{defi}

\begin{rema}
The Peirce $E$-quiver of a $k$-algebra is \emph{simply laced}, meaning that there is at most one arrow from a vertex to another one, and that it contains no loops (see for instance \cite[p. 112]{KIRICHENKOPLA}).
\end{rema}

\begin{defi}
Let $\C$ be a $k$-category with finitely many objects. The \emph{Peirce quiver of $\C$} has set of vertices $\C_0$. For different objects $x$ and $y$, there is an arrow from $x$ to $y$ if $_y\C_x\neq 0$.
\end{defi}

Observe that the Peirce quiver of a $k$-category with finitely many objects is also simply laced.  Moreover, Peirce quivers remain unchanged under the  one-to-one correspondence of Lemma \ref{oneone}.

 The next definition provides ultimately the same information as a $k$-category with finitely many objects, or than as a $k$-algebra equipped with a system. Nevertheless the combinatorics behind will be quite useful in the sequel.

\begin{defi}
 Let $Q$ be a simply laced quiver. A \emph{$Q$-set} $\Delta$  is a set which is the union of the following:

\begin{itemize}

\item a set of algebras $\{A_x\}_{x\in Q_0}$,

\item a set of non zero bimodules $\{{}_{t(a)}M_{s(a)}\}_{ a\in Q_1}$, where  ${}_{t(a)}M_{s(a)}$ is an $A_{t(a)}-A_{s(a)}$-bimodule. We will denote this bimodule also by $M_a$. We agree that ${}_yM_x=0$ if there is no arrow from $x$ to $y$. We also write sometimes ${}_xM_x$ instead of $A_x$.

\item a set $\alpha$ of maps as follows: for vertices $x,y,z$ such that $x\neq y$ and $y\neq z$,  an $A_z-A_x$-bimodule map $$\alpha_{z,y,x}: {}_zM_y\otimes_{A_y} {}_yM_x\to {}_zM_x$$
    such that the natural associativity constraints hold,
 insuring that $\C_\Delta$ defined below is  a category.

\end{itemize}
\end{defi}

To a $k$-category with finitely many objects $\C$ with Peirce quiver $Q$, we associate the evident corresponding $Q$-set. Conversely, we set the following:

\begin{defi}
Let $Q$ be a simply laced quiver and let $\Delta$ be a $Q$-set.
\begin{itemize}

\item The \emph{$k$-category $\C_\Delta$} has set of objects $Q_0$, while $_y\left(\C_\Delta\right)_x = {}_yM_x$ and composition of $\C_\Delta$ is determined by the family $\alpha$. Observe that the Peirce quiver of $\C_\Delta$ is $Q$.

    \item The \emph{$k$-algebra $\Lambda_\Delta$} is defined as the algebra arising from $\C_\Delta$.

    In more detail, $\Lambda_\Delta = A\oplus M$, where
    $A=\times_{x\in Q_0} A_x$ and  $M=\oplus_{a\in Q_1} M_a$. For $a\in Q_1$, note that $M_a$ is an $A$-bimodule by extending its original actions by zero. The product in $M$ is determined by the family $\alpha$.

    \item The $k$-algebra $\Lambda_\Delta$ is  \emph{a $k$-algebra arising from a $k$-category with zero compositions} if the maps of the family $\alpha$ are zero.
\end{itemize}
\end{defi}

\begin{exam}

Recall that if $A_y$ is a $k$-algebra and $M_a$ is a left $A_y$-module, then the one point extension $A_y[M_a]$ is the algebra $\left(
                                                                                                       \begin{array}{cc}
                                                                                                         k & 0 \\
                                                                                                         M_a & A_y \\
                                                                                                       \end{array}
                                                                                                     \right)= \left(\begin{array}{cc}
                                                                                                         A_y & M_a \\
                                                                                                         0 & k \\
                                                                                                       \end{array}
                                                                                                     \right)$.
This is an instance of a $k$-algebra arising from a $k$-category with zero compositions, where $Q=x\cdot \stackrel{a}{\longrightarrow} \cdot y $.
\end{exam}
\begin{exam}
More generally, let $A_x$ and $A_y$ be algebras, let $M_a$ be an $A_y - A_x$-bimodule and let $M_b$ be an $A_x-A_y$-bimodule. The null-square algebra (see \cite{CIBILSREDONDOSOLOTAR2017} or \cite{BUCHW}) is $\left(
                                                                                                          \begin{array}{cc}
                                                                                                            A_x & M_b \\
                                                                                                            M_a & A_y \\
                                                                                                          \end{array}
                                                                                                        \right)$
with matrix multiplication given by the bimodule structures of $M_a$ and $M_b$, and setting $m_am_b=0=m_bm_a$ for all $m_a\in M_a$ and $m_b\in M_b$. This is also an instance of a $k$-algebra arising from a $k$-category with zero compositions. In this case the quiver $Q$ is ${}_x\cdot \rightleftarrows \cdot_{y} $.

\end{exam}

Next we provide a link with bound quiver algebras.
Let $Q$ be a  quiver and let $kQ$ be the path algebra. Let $\langle Q_1\rangle$ be the two-sided ideal of $kQ$ generated by the arrows. A two-sided ideal $I$ of $kQ$ is admissible if $I\subset \langle Q_1 \rangle^2$ and if there exists a positive integer $n$ such that $\langle Q_1\rangle^n\subset I$. The quiver $Q$ is equal to the Gabriel quiver - also called Ext-quiver - of $\Lambda=kQ/I$. Its vertices are the isomorphism classes of simple $\Lambda$-modules; let $S$ and $S'$ be simple $\Lambda$-modules, the number of arrows from $S$ to $S'$ is $\dim_k\Ext^1_\Lambda(S',S)$.

\begin{rema} Let $\Lambda=kQ/I$ be a $k$-algebra as above. The Peirce $Q_0$-quiver of $\Lambda$ is obtained from $Q$ as follows:
\begin{enumerate}
\item
delete all loops,
\item
for $x\neq y$, add an arrow from $x$ to $y$ if there exists a path from $x$ to $y$ in $Q$ which is not in $I$ - that is a path which is not zero in $\Lambda$,
\item
replace any set of arrows sharing the same source $x$, and sharing the same target $y$, by a single arrow from $x$ to $y$.
\end{enumerate}
\end{rema}

Recall that a \emph{cleft singular extension algebra} (see \cite[p. 284]{MACLANE}) is  a $k$-algebra $\Lambda$ with a decomposition $\Lambda = A \oplus M,$ where $A$ is a subalgebra and $M$ is a two-sided ideal of $\Lambda$ verifying  $M^2=0.$ These algebras are also called trivial extensions, see for instance \cite{ASSEMREDONDOSCHIFFLER} and \cite{BELIGIANNISCA, BELIGIANNISJPAA} where the natural generalization for abelian categories is considered.

A $k$-algebra arising from a $k$-category with zero compositions is thus a cleft singular extension $A\oplus M$, which is moreover equipped with a system.

\begin{exam}
Let $R$ be a quiver given by two quivers  $Q_1$ (which we view horizontally upstairs) and $Q_2$  (horizontally downstairs),  and  a finite set of  \emph{vertical down} arrows from some vertices of $Q_1$ to vertices of $Q_2$, as well as a set of \emph{vertical up} arrows. Let $I$ be the two-sided ideal of $kR$ generated by the paths which contain two vertical arrows.

The algebra $\Lambda= kR/I$ arises from a $k$-category with zero compositions. Indeed, for $i=1,2$, let $x_i$ be the sum of  the vertices of $Q_i$. The set $E=\{x_1, x_2\}$ is a system of $\Lambda$. The Peirce $E$-quiver is $Q=\ {}_{x_1}\cdot \rightleftarrows \cdot_{x_2} $, where the arrow from $x_1$ to $x_2$ is called $a$ and the reverse one is called $b$. The algebras of the $Q$-set are $A_{x_1} =kQ_1$ and $A_{x_2} =kQ_2$. Notice that the bimodules $M_a$ and $M_b$ are projective bimodules.
\end{exam}

\section{\sf Hochschild cohomology of algebras arising from categories}\label{HHarising}

In this section we provide tools for computing the Hochschild cohomology of a $k$-algebra arising from a $k$-category with finitely many objects.

Let $\Lambda$ be a $k$-algebra and let $Z$ be a $\Lambda$-bimodule. By definition, the Hochschild cohomology of $\Lambda$ with coefficients in $Z$ is
 $$\HH ^n(\Lambda, Z) = \Ext^n_{\Lambda\otimes \Lambda^{\mathsf{op}}}(\Lambda, Z).$$
For $Z=\Lambda$ it is usual to write $\HH\HH^n(\Lambda)$ instead of $\HH^n(\Lambda, \Lambda).$

The following result is well-known, the proof is analogous to the one sketched in \cite{CIBILSREDONDOSOLOTAR2017} for Hochschild homology:

 \begin{lemm}\label{byseparable}
Let $\Lambda$ be a $k$-algebra, let $D$ be a separable subalgebra of $\Lambda$ and let $Z$ be a $\Lambda$-bimodule. The cohomology
of the  complex $\JJ^{\bullet}(Z)$
\small
$$0\to Z^D\stackrel{d}{\to}\Hom_{D-D}(\Lambda,Z)      \stackrel{d}{\to}   \Hom_{D-D}(\Lambda\otimes_D\Lambda,Z)\cdots       \stackrel{d}{\to} \Hom_{D-D}\left(\Lambda^{\otimes_D n},Z\right)\stackrel{d}{\to} \cdots$$
\normalsize
is $\HH^*(\Lambda, Z)$,
where $Z^D=\{z\in Z \mid dz=zd \mbox{ for all  } d\in D\}$ and $\Hom_{D-D}$ stands for $\Hom_{{D\otimes D^{\mathsf{op}}}}$. The definition of the maps $d$ is provided by the same formulas  that those for computing Hochschild cohomology:

\begin{itemize}
\item
for $n>0$
\begin{align*}
(d f)(x_1\otimes x_2\otimes \dots\otimes x_{n+1}) &= x_1f(x_2\otimes x_3\otimes  \dots \otimes x_{n+1}) \\
&+\sum_{i=1}^{n}(-1)^i f(x_1\otimes \dots\otimes x_ix_{i+1}\otimes \dots\otimes x_{n+1}) \\
&+(-1)^{n+1}f(x_1\otimes x_2\otimes \dots \otimes x_{n})x_{n+1},
\end{align*}
\item
for $n=0$, $z\in Z^D$, and $x\in \Lambda$ we have $(d z)(x)= xz-zx$.
\end{itemize}
\end{lemm}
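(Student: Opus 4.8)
The plan is to reduce the statement to the classical bar-resolution computation of Hochschild cohomology, using the separability of $D$ to replace the $k$-relative bar complex by the $D$-relative one. First I would recall that, since $D$ is a separable $k$-algebra, $\Lambda$ is projective as a $D$-bimodule (equivalently, as a $D\otimes D^{\mathsf{op}}$-module), and more generally every $\Lambda$-bimodule is relatively $(D,D)$-injective in the appropriate sense; concretely, the normalized $D$-relative bar resolution
$$\cdots \to \Lambda\otimes_D\Lambda\otimes_D\Lambda \to \Lambda\otimes_D\Lambda \to \Lambda \to 0$$
is a resolution of $\Lambda$ by projective $\Lambda\otimes\Lambda^{\mathsf{op}}$-modules, because each term $\Lambda^{\otimes_D(n+2)}$ is a direct summand of $\Lambda^{\otimes(n+2)}$ (insert the separability idempotent of $D$), hence projective over $\Lambda^e=\Lambda\otimes\Lambda^{\mathsf{op}}$. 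The contracting homotopy is the usual one $s(x_0\otimes\cdots\otimes x_{n+1}) = 1\otimes x_0\otimes\cdots\otimes x_{n+1}$, which is well defined over $D$ because $1\in\Lambda$ is $D$-central on the nose.

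Next I would apply $\Hom_{\Lambda^e}(-,Z)$ to this resolution and identify the resulting complex with $\JJ^\bullet(Z)$. The adjunction $\Hom_{\Lambda^e}(\Lambda\otimes_D\Lambda^{\otimes_D n}\otimes_D\Lambda, Z)\cong \Hom_{D\otimes D^{\mathsf{op}}}(\Lambda^{\otimes_D n}, Z)$ is the standard hom-tensor adjunction for the restriction-of-scalars along $D^e\to\Lambda^e$, and for $n=0$ it gives $\Hom_{\Lambda^e}(\Lambda\otimes_D\Lambda,Z)\cong \Hom_{D^e}(D,Z)\cong Z^D$; I would spell out just enough of this to fix signs. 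Under these identifications the differential of the complex $\Hom_{\Lambda^e}(\text{bar}_\bullet,Z)$ transports precisely to the formulas for $d$ displayed in the statement — the alternating sum coming from the faces of the bar resolution, with the outer terms $x_1 f(\cdots)$ and $(-1)^{n+1} f(\cdots) x_{n+1}$ coming from the first and last face maps, and the degree-zero case $z\mapsto (x\mapsto xz-zx)$ coming from the single face $\Lambda\otimes_D\Lambda\to\Lambda$. Since the resolution consists of projective $\Lambda^e$-modules and resolves $\Lambda$, the cohomology of this complex is by definition $\Ext^\bullet_{\Lambda^e}(\Lambda,Z)=\HH^\bullet(\Lambda,Z)$.

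The main obstacle — really the only non-formal point — is verifying that each $\Lambda^{\otimes_D(n+2)}$ is genuinely projective over $\Lambda^e$ and that the $D$-relative bar complex is exact; both rest on separability of $D$. For projectivity I would use the separability idempotent $e=\sum e'_i\otimes e''_i\in D\otimes D^{\mathsf{op}}$ with $\sum e'_i e''_i = 1$ and $de = ed$ for all $d\in D$, to write the canonical surjection $\Lambda^{\otimes(n+2)}\twoheadrightarrow \Lambda^{\otimes_D(n+2)}$ as a split epimorphism of $\Lambda^e$-modules (the splitting inserts $e$ at each of the $n+1$ tensor-over-$D$ slots), exhibiting the target as a summand of a free $\Lambda^e$-module. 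For exactness of the complex I would note that the same homotopy $s$ above is $D$-linear and contracts the augmented complex of $k$-modules, so the complex is even $k$-split exact. Everything else is the routine translation of face maps into the coboundary formula, which I would not carry out in detail. \qed
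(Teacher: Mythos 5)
Your proof is correct and is the standard argument; the paper itself gives no proof of this lemma, merely noting that it is well-known and analogous to the Hochschild homology version sketched in \cite{CIBILSREDONDOSOLOTAR2017}, and that sketch is exactly the route you take (the $D$-relative bar resolution, projectivity of its terms over $\Lambda\otimes\Lambda^{\mathsf{op}}$ via the separability idempotent, contractibility via $s(x_0\otimes\cdots\otimes x_{n+1})=1\otimes x_0\otimes\cdots\otimes x_{n+1}$, and the hom-tensor adjunction identifying $\Hom_{\Lambda^e}(\Lambda\otimes_D\Lambda^{\otimes_D n}\otimes_D\Lambda,Z)$ with $\Hom_{D-D}(\Lambda^{\otimes_D n},Z)$ and, in degree zero, with $Z^D$). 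All the key points are in place, so nothing further is needed.
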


Let $Q$ be a simply laced quiver with a $Q$-set $\Delta$, and let $\Lambda_\Delta$ be the corresponding algebra, see Section \ref{section arising}. Let $D=\times_{x\in Q_0}k$ be the separable subalgebra of $A=\times_{x\in Q_0} A_x$ given by the inclusions $k\subset A_x$. Next we provide a canonical decomposition of the space of $n$-cochains $\JJ^n(\Lambda_\Delta)$.

For any quiver $Q$ and $m>0$, a \emph{path of length $m$} is a sequence of arrows $\omega = a_m\dots a_1$ which are \emph{concatenated}, that is
$t(a_i)=s(a_{i+1})$ for all $i$. The set of vertices $Q_0$ is the set of paths of length $0$; if $x\in Q_0$, then $s(x)=t(x)=x$. The set of  paths of length $m$ is denoted $Q_m$.
The set of paths of length less or equal to $m$ is denoted $Q_{\leq m}$.
The maps $s$ and $t$ are extended to the set of paths by $s(\omega)=s(a_1)$ and $t(\omega)=t(a_m)$.

A path $\omega$ is a \emph{cycle} if $s(\omega)= t(\omega)$. We denote $CQ_m$ the set of  \emph{cycles} of length $m$, note that  $CQ_0=Q_0$. Its complement in $Q_m$ is the set of \emph{non cycles} of length $m$ that we denote $DQ_m$.

\normalsize

\begin{defi}
Let $Q$ be a simply laced quiver, let $m>0$ and let $\omega=a_m\cdots a_1\in Q_m$.
The set $T_n(\omega)$ of trajectories of duration $n$ over $\omega$ is the set of sequences
$$\tau= t(a_m)^{p_{m+1}},a_m,s(a_m)^{p_m},\cdots,s(a_2)^{p_2},a_1,s(a_1)^{p_1}$$
where each $p_i$ is a non negative integer and $n=m+\sum_1^{m+1}p_i$. The integer $n-m$ is the \emph{total waiting time} of the trajectory. For $p> 0$ and $x\in Q_0$, the symbol $x^p$  denotes the sequence $(x,x,\dots, x)$ where $x$ is repeated $p$ times.
The \emph{waiting time} of the trajectory $x^p$ over $x$ is $p$. If $p=0$, the symbol $x^0$ is the empty sequence,  it corresponds to a $0$ waiting time at $x$.

\end{defi}
We record the following facts:

\begin{itemize}
\item
If $x$ is  a vertex, then $T_n(x)=\{x^n\}$ for $n\geq 0$.
\item
If $\omega\in Q_m$ and $n<m$, then $T_n(\omega)=\emptyset$.
\item
If $\omega\in Q_m$, then $T_m(\omega)$  has a
 unique element  $$t(a_m)^0,a_m,s(a_m)^0,\dots,s(a_2)^0,a_1,s(a_1)^0$$ of total waiting time zero and duration $m$.
\end{itemize}

\begin{rema}
By definition, a trajectory $\tau$ over a path $\omega$ is a sequence of vertices and arrows which are concatenated, that is the product of two successive entries of $\tau$ is not zero in the path algebra $kQ$. Moreover, the  product of all successive entries of $\tau$ is  equal to $\omega$ in $kQ$.
\end{rema}

\begin{defi}
Let $Q$ be a simply laced quiver with a $Q$-set $\Delta$ and let $\omega=a_m\dots a_1\in Q_m$ for $m>0$. For $n\geq m$, let $$\tau= t(a_m)^{p_{m+1}},a_m,s(a_m)^{p_m},\cdots,s(a_2)^{p_2},a_1,s(a_1)^{p_1} \in T_n(\omega).$$
The \emph{evaluation of $\tau$ at $\Delta$}  is the vector space
$$\tau_\Delta= A_{t(a_m)}^{\otimes p_{m+1}}\otimes M_{a_m} \otimes A_{s(a_m)}^{\otimes p_{m}} \otimes\cdots\otimes A_{s(a_2)}^{\otimes p_{2}}\otimes M_{a_1}\otimes A_{s(a_1)}^{\otimes p_{1}}$$
where all the tensor products are over $k$. If $x\in Q_0$ and $\tau\in T_n(x)$, then $\tau_\Delta= A_x^{\otimes n}.$ By definition, if $A$ is a a $k$-algebra, then $A^{\otimes 0}=k$.

\end{defi}

\begin{prop}\label{decompose}
Let $Q$ be a simply laced quiver  with a $Q$-set $\Delta$,  let $\Lambda_\Delta$ be the corresponding algebra, and let $Z$ be a $\Lambda_\Delta$-bimodule. For $n>0$ the following decompositions hold:
\begin{equation}\label{Lambda^n}
(\Lambda_\Delta)^{\otimes_D n} = \bigoplus_{\omega\in Q_{\leq n}}\left[\bigoplus_{\tau \in T_n(\omega)} \tau_\Delta\right]
\end{equation}
\medskip
\begin{equation}\label{HomDLambda^n}
\JJ^n(Z) = \bigoplus_{\omega\in Q_{\leq n}}\left[\bigoplus_{\tau \in T_n(\omega)} \Hom_k\left( \tau_\Delta,\  t(\omega)Z s(\omega)\right) \right].
\end{equation}
Moreover
$$\JJ^0(\Lambda)=\Lambda^D=\times_{x\in Q_0}A_x= \bigoplus_{x\in Q_0}\bigoplus_{\tau\in T_0(x)} \tau_\Delta.$$
\end{prop}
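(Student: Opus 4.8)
The plan is to establish the decompositions by tracking how the separable subalgebra $D=\times_{x\in Q_0}k$ acts and using the explicit additive structure of $\Lambda_\Delta = A\oplus M$ with $A=\times_{x\in Q_0}A_x$ and $M=\oplus_{a\in Q_1}M_a$. First I would record the Peirce-type decomposition of $\Lambda_\Delta$ with respect to the idempotents $e_x\in D$: writing $e_y\,\Lambda_\Delta\,e_x$ one gets $A_x$ when $y=x$ and $M_a$ when there is an arrow $a\colon x\to y$ (and $0$ otherwise, since $Q$ is simply laced), so that $\Lambda_\Delta = \bigoplus_{\omega\in Q_{\le 1}} (\Lambda_\Delta)_\omega$ where $(\Lambda_\Delta)_\omega = t(\omega)\Lambda_\Delta s(\omega)$ is $A_x$ for $\omega=x$ and $M_a$ for $\omega=a$. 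This is exactly the case $n=1$ of \eqref{Lambda^n}, since $T_1(x)=\{x^1\}$ with $x^1_\Delta=A_x$ and $T_1(a)$ is the single trajectory $t(a)^0,a,s(a)^0$ with evaluation $M_a$.

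Next I would compute the iterated tensor product over $D$. Since $D$ is a product of copies of $k$ indexed by $Q_0$, a tensor product $X\otimes_D Y$ of $D$-bimodules decomposes along matching middle indices: $(X\otimes_D Y) = \bigoplus_{x\in Q_0}(\,X e_x)\otimes_k (e_x Y)$. Applying this inductively to $(\Lambda_\Delta)^{\otimes_D n}$ and using the $n=1$ decomposition in each factor, I obtain
\[
(\Lambda_\Delta)^{\otimes_D n} = \bigoplus (\Lambda_\Delta)_{b_n}\otimes_k (\Lambda_\Delta)_{b_{n-1}}\otimes_k\cdots\otimes_k (\Lambda_\Delta)_{b_1},
\]
where the sum runs over all sequences $(b_n,\dots,b_1)$ with each $b_i\in Q_{\le 1}$ and $s(b_{i+1})=t(b_i)$ (concatenation; otherwise the corresponding Peirce product vanishes). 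The key bijective bookkeeping step is then to identify such concatenated sequences of vertices and arrows of total length $n$ with the set of pairs $(\omega,\tau)$ where $\omega\in Q_{\le n}$ is the path obtained by deleting the vertex-entries (equivalently, the product of all entries in $kQ$), and $\tau\in T_n(\omega)$ records where the vertices sit, via the waiting times $p_i$ = number of consecutive vertex-entries in the $i$-th slot. Under this bijection the tensor factor $(\Lambda_\Delta)_{b_n}\otimes_k\cdots\otimes_k(\Lambda_\Delta)_{b_1}$ is precisely $\tau_\Delta$ by definition of the evaluation, and the constraint $t(\omega)=t(b_n)$, $s(\omega)=s(b_1)$ matches. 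This proves \eqref{Lambda^n}.

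For \eqref{HomDLambda^n}, I would apply $\Hom_{D\text{-}D}(-,Z)$ to \eqref{Lambda^n}. A $D$-bimodule map out of a direct sum is the product of the maps on each summand, and for a summand $\tau_\Delta$, which as a $D$-bimodule is concentrated in the $(t(\omega),s(\omega))$ Peirce component (all inner tensor factors are over $k$ and carry no residual $D$-action because each $A_x$ is a $k$-algebra with the left/right $e_x$ acting as identity), one has $\Hom_{D\text{-}D}(\tau_\Delta,Z)=\Hom_k(\tau_\Delta,\ t(\omega)Z\,s(\omega))$. Since each $\tau_\Delta$ is finitely generated over $k$ only in the sense needed — actually no finiteness is needed, $\Hom$ turns the direct sum in the source into a direct product, but because for fixed $n$ the index set $\{(\omega,\tau): \omega\in Q_{\le n},\ \tau\in T_n(\omega)\}$ is \emph{finite} ($Q$ is finite and durations are bounded by $n$), the product is a finite direct sum — this yields \eqref{HomDLambda^n}. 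Finally, the degree-zero statement is immediate: $\JJ^0(\Lambda_\Delta)=\Lambda_\Delta^D=\bigoplus_{x\in Q_0} e_x\Lambda_\Delta e_x = \times_{x\in Q_0}A_x$, and $T_0(x)=\{x^0\}$ with $x^0_\Delta = A_x^{\otimes 0}$ — wait, one must be careful: for $n=0$ the evaluation $\tau_\Delta$ for $\tau\in T_0(x)$ is $A_x^{\otimes 0}=k$ by convention, so the stated identity $\bigoplus_{x}\bigoplus_{\tau\in T_0(x)}\tau_\Delta = \bigoplus_x k$ should be read as the Peirce decomposition of $\Lambda_\Delta^D$ with the understanding that at degree $0$ the relevant object is $\Lambda^D$ itself; I would simply note $\Lambda_\Delta^D=\times_x A_x$ directly and observe this matches $T_0$. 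The main obstacle is purely the combinatorial identification in the second paragraph — setting up the bijection between concatenated length-$n$ sequences from $Q_{\le 1}$ and trajectories of duration $n$ cleanly, and checking it respects source/target and the evaluation functor — the homological algebra itself (commuting $\otimes_D$ and $\Hom_{D\text{-}D}$ past direct sums, Peirce decompositions over a separable commutative $D$) is entirely routine.
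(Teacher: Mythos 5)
Your proposal is correct and follows essentially the same route as the paper: a Peirce decomposition of $\Lambda_\Delta$ over the idempotents $e_x\in D$, an induction on $n$ matching middle indices in $\otimes_D$ to identify summands with trajectories, and a Schur-type argument to convert $\Hom_{D\text{-}D}$ into $\Hom_k$ on isotypic components. The paper only spells out the low-degree cases of the induction, so your explicit bijection between concatenated sequences in $Q_{\le 1}$ and trajectories, and your remark on the degree-zero convention, are just a more detailed rendering of the same argument.
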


\begin{proof} The proof of (\ref{Lambda^n}) is by induction on $n$, we only describe in detail the low degree cases.
Recall that $\Lambda_\Delta = A\oplus M$, hence
$$(\Lambda_\Delta)\otimes_D (\Lambda_\Delta) = \left(A\otimes_D A\right) \oplus \left(A\otimes_D M\right) \oplus \left(M\otimes_D A\right) \oplus \left(M\otimes_D M\right).$$
For  $x\in Q_0$, let $e_x$ be the idempotent of $D$ with value $1$ at $x$ and $0$ at other vertices. Note that $\{e_x\}_{x\in Q_0}$ is a complete set of central orthogonal idempotents of $A$. Actually $$Ae_x=A_x=e_xA.$$  Observe that if
$x\neq y$, then
$$ae_x\otimes e_ya'=ae_x^2\otimes e_ya'=ae_x\otimes e_xe_ya'=0,$$
so $A_x\otimes_D A_y=0$. Moreover $A_x\otimes_D A_x= A_x\otimes A_x$. Hence
$$A\otimes_D A= \bigoplus_{x\in Q_0} A_x\otimes A_x.$$
The direct summand $A_x\otimes A_x$ corresponds to the trajectory $x^2$ of total waiting time $2$ and duration $2$ at the vertex $x$.

Observe that if $a\in Q_1$, then $M_a= e_{t(a)}Me_{s(a)}=e_{t(a)}(\Lambda_\Delta) e_{s(a)}$, while $e_yMe_x=0$ if there is no arrow  from $x$ to $y$ in $Q$.
$$M=\bigoplus_{a\in Q_1}M_a = \bigoplus_{x,y\in Q_0}e_yMe_x.$$
If $z\neq t(a)$, then $A_z\otimes_D M_a=0$, while $A_{t(a)}\otimes_D M_a= A_{t(a)}\otimes M_a$.
Hence
$$A\otimes_D M = \bigoplus_{a\in Q_1} A_{t(a)}\otimes M_a.$$
Similarly
$$M\otimes_D A = \bigoplus_{a\in Q_1} M_a\otimes A_{s(a)}.$$
Each  direct summand above corresponds to a path of length $1$ - that is an arrow $a$ - and the trajectories $(t(a)^1,a,s(a)^0)$ or $(t(a)^0,a,s(a)^1)$ over $a$, which are of  total waiting time $1$ and duration $2$.
Analogously, we obtain the decomposition
$$M\otimes_D M= \bigoplus_{\{\omega=a_2a_1\in Q_2\}} M_{a_2}\otimes M_{a_1}.$$
Each direct summand corresponds to the unique trajectory over $a_2a_1$ of total waiting time $0$ and duration $2$.

The next observations will prove (\ref{HomDLambda^n}). Recall that $D=\times_{x\in Q_0}ke_x$ is a semisimple algebra.  Hence a $D$-bimodule $U$ has a canonical decomposition into its isotypic components $U=\oplus_{x,y\in Q_0}e_yUe_x$. Observe that each direct summand of (\ref{Lambda^n}) is a $D$-bimodule. More precisely for $\tau\in T_n(\omega)$, we have that $\tau_\Delta$ is a direct summand of the isotypic component $$e_{t(\omega)}\left[(\Lambda_\Delta)^{\otimes_D n}\right]e_{s(\omega)}.$$

Let $U$ and $V$ be $D$-bimodules.
The following is immediate - and it is an instance of Schur's Lemma: if $y\neq t$ or $x\neq u$, then $$\Hom_D(e_yUe_x, e_tVe_u)=0,$$ while $$\Hom_D(e_yUe_x, e_yVe_x)=\Hom_k(e_yUe_x, e_yVe_x).$$
\qed
\end{proof}
Our next aim is to use the decomposition of cochains that we have obtained in Proposition \ref{decompose} in order to describe the coboundary $d$ of Lemma \ref{byseparable}.

\begin{defi}\label{plustrajectories}
Let
$$\tau= t(a_m)^{p_{m+1}},a_m,s(a_m)^{p_m},\cdots,s(a_2)^{p_2},a_1,s(a_1)^{p_1}$$
be a $n$-trajectory over a path $\omega=a_m\dots a_1$. The set $\tau^+$ is the union of:
\begin{itemize}
\item $\tau_0^+$, the set of $n+1$-trajectories obtained by increasing a waiting time of $\tau$ by one.
\item $\tau_1^+$, the set of $n+1$-trajectories
$$t(c)^0, c,s(c)^{p_{m+1}},a_m,s(a_m)^{p_m},\cdots,s(a_2)^{p_2},a_1,s(a_1)^{p_1}$$
where $c\in Q_1$ is any arrow after $\omega$, that is verifying $s(c)=t(a_m)$.

Similarly, $\tau_1^+$ contains also the trajectories obtained by adding any arrow $c$ before $\omega$, that is verifying $t(c)=s(a_1).$

\item
$\tau_2^+$,  the set of $n+1$-trajectories obtained

\begin{itemize}
\item
either by replacing any arrow $a_i$ by some  path $a''_i a'_i$  which is parallel to $a_i$,
\item
or by replacing any waiting time  by some  path $a''_i a'_i$  which is parallel to it.
\end{itemize}

    \end{itemize}
\end{defi}

\begin{defi}\label{counterpart}
Let $\omega$ be a path of $Q$. We denote by $\Delta_\omega$ the $A_{t(\omega)}-A_{s(\omega)}$-bimodule $$t(\omega)(\Lambda_\Delta)s(\omega).$$
\end{defi}

Observe that if there is no arrow parallel to $\omega$ - that is, there is no arrow $a$ such that $s(a)=s(\omega)$ and $t(a)=t(\omega)$ - and  $\omega$ is a non cycle, then $\Delta_\omega = 0$.  On the other hand if such an arrow $a$ exists, then $\Delta_\omega = M_a\neq 0$. If $\omega$ is a cycle, then $\Delta_\omega= A_{s(\omega)}= A_{t(\omega)}$.

\begin{defi}\label{J}
The space of \emph{$n$-cochains} over a trajectory $\tau\in T_n(\omega)$ is
$$\JJ_\tau= \Hom_k\left( \tau_\Delta,\  \Delta_\omega\right).$$
\end{defi}

\begin{prop}\label{preserve}Let $d$ be one of the coboundary maps of Lemma \ref{byseparable}. The following holds
$$d \JJ_\tau \subset \bigoplus_{\sigma \in\tau^+} \JJ_{\sigma}.$$

\end{prop}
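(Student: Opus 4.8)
The strategy is to take a cochain $f \in \JJ_\tau$, where $\tau \in T_n(\omega)$, regard it as a cochain in $\JJ^n(Z)$ (with $Z = \Lambda_\Delta$) via the decomposition of Proposition~\ref{decompose}, apply the explicit coboundary formula of Lemma~\ref{byseparable}, and then re-read $df$ through the same decomposition to identify which trajectory-summands can receive a nonzero contribution. Concretely, $f$ vanishes on every direct summand $\sigma_\Delta$ of $(\Lambda_\Delta)^{\otimes_D n}$ with $\sigma \neq \tau$, so $df$ is determined by its values on tensors $x_1\otimes\cdots\otimes x_{n+1}$ whose ``type'' (the word of vertices and arrows read off from the factors, as in the Remark after Definition~\ref{plustrajectories}) is such that after the coboundary operations at least one resulting tensor has type $\tau$. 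The point is to enumerate these types and check that each one is exactly an element of $\tau^+$.

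The bookkeeping goes term by term in the coboundary formula. The outer terms $x_1 f(x_2\otimes\cdots\otimes x_{n+1})$ and $(-1)^{n+1}f(x_1\otimes\cdots\otimes x_n)x_{n+1}$ contribute only when the $n$-tuple obtained by deleting $x_1$ (resp. $x_{n+1}$) has type $\tau$; this forces $x_1$ (resp. $x_{n+1}$) to be either an algebra element sitting at the appropriate end of $\omega$ — which produces a trajectory in $\tau_0^+$, obtained by raising the extremal waiting time by one — or an arrow $c$ concatenable with $\omega$ at that end, which produces a trajectory in $\tau_1^+$. (Here one uses that $\Lambda_\Delta = A \oplus M$ and that $A = \times_x A_x$, $M = \oplus_a M_a$, so a single tensor factor is either a pure algebra element $A_x$ or a pure bimodule element $M_a$; mixed elements split into these pieces by linearity, which is why the analysis reduces to the pure types.) The internal terms $(-1)^i f(x_1\otimes\cdots\otimes x_i x_{i+1}\otimes\cdots\otimes x_{n+1})$ contribute only when multiplying two adjacent factors $x_i, x_{i+1}$ yields a tuple of type $\tau$; since multiplication in $\Lambda_\Delta$ sends $A_x \otimes A_x \to A_x$, $A_{t(a)}\otimes M_a \to M_a$, $M_a \otimes A_{s(a)} \to M_a$, and (because compositions need not be zero) $M_{a''}\otimes M_{a'} \to M_a$ only when $a$ is parallel to $a''a'$, the pre-image tuple is obtained from $\tau$ either by splitting a waiting time $A_x$ into $A_x\otimes A_x$ (a trajectory in $\tau_0^+$), by splitting an arrow $M_a$ into $A_{t(a)}\otimes M_a$ or $M_a\otimes A_{s(a)}$ (again in $\tau_0^+$, via the adjacent waiting time — or in $\tau_1^+$ at the ends, matching the outer-term analysis), or by replacing an arrow $a_i$ or a waiting time by a parallel length-two path $a_i''a_i'$ (a trajectory in $\tau_2^+$). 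Collecting all these cases shows $df$ is supported on $\bigsqcup_{\sigma\in\tau^+}\JJ_\sigma$, which is the claim.

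The main obstacle — and the only genuinely delicate point — is the case analysis for the internal terms when a product of two adjacent \emph{bimodule} factors is nonzero: one must argue that the only way $M_{a''}\otimes_k M_{a'}$ maps nontrivially into a summand of $\Delta_\omega$ is via a bimodule map $\alpha_{t(a''),t(a'),s(a')}$ with $a''a'$ parallel to some arrow, and that this records precisely the ``replace an arrow (or waiting time) by a parallel path of length two'' move defining $\tau_2^+$; one also has to make sure the degenerate sub-cases (an arrow split against a neighbouring waiting time, or a bimodule factor absorbed at the extreme ends) are correctly assigned to $\tau_0^+$ or $\tau_1^+$ rather than double-counted. The rest is the routine verification that the vertex-matching and concatenation conditions built into the definitions of $T_{n+1}$ and of $\tau^+$ are exactly the conditions that make the relevant summands nonzero, together with the remark that $D$-linearity of $f$ — equivalently, working over $\otimes_D$ rather than $\otimes_k$ — kills all the ``type-mismatched'' contributions so that nothing outside $\tau^+$ survives.
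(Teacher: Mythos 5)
Your proposal is correct and follows essentially the same route as the paper's proof: decompose $df_\tau$ along trajectories via Proposition \ref{decompose}, run through the three kinds of terms in the coboundary formula, and match each way a term can be nonzero with membership of the target trajectory in $\tau_0^+$, $\tau_1^+$ or $\tau_2^+$. The only cosmetic quibble is that the internal terms never produce $\tau_1^+$ (splitting an extremal arrow against an adjacent algebra factor just raises a waiting time, hence lands in $\tau_0^+$); $\tau_1^+$ arises solely from the two outer terms, exactly as in the paper.
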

\begin{proof}
Let $T_{n}$ be the set of trajectories of duration $n$ and let $\tau\in T_n$. Let $f_\tau \in \JJ_\tau$, write $d f_\tau= \sum_{\sigma \in T_{n+1}} \left(d f_\tau \right)_\sigma$ the decomposition of $d f_\tau$ according to (\ref{HomDLambda^n}). We assert that if $\sigma \notin \tau^+$, then $\left(d f_\tau \right)_\sigma =0.$ Recall that
\begin{align*}
(d f_\tau)(x_1\otimes x_2\otimes \dots\otimes x_{n+1}) &= x_1f_\tau(x_2\otimes x_3\otimes  \dots \otimes x_{n+1}) \\
&+\sum_{i=1}^{n}(-1)^i f_\tau(x_1\otimes \dots\otimes x_ix_{i+1}\otimes \dots\otimes x_{n+1}) \\
&+(-1)^{n+1}f_\tau(x_1\otimes x_2\otimes \dots \otimes x_{n})x_{n+1}.
\end{align*}

Let $x_1\otimes x_2\otimes \dots\otimes x_{n+1}\in \sigma_\Delta$ where $\sigma \notin \tau^+$. We will prove that each summand above is zero.

\begin{itemize}

 \item[-]
 If $x_1f_\tau(x_2\otimes x_3\otimes  \dots \otimes x_{n+1})\neq 0$, then
 $f_\tau(x_2\otimes x_3\otimes  \dots \otimes x_{n+1})\neq 0,$ hence $0\neq x_2\otimes x_3\otimes  \dots \otimes x_{n+1}\in \tau_\Delta$. Moreover, $x_1$ belongs either to $A_{t(\omega)}$ or to a bimodule $M_c$ for $c\in Q_1$ with $s(c)=t(\omega)$.  This is equivalent respectively to $\sigma\in\tau_0^+$ or $\sigma\in\tau_1^+$.
\item[-]
If $f_\tau(x_1\otimes \dots\otimes x_ix_{i+1}\otimes \dots\otimes x_{n+1})\neq 0$, then $0\neq x_1\otimes \dots\otimes x_ix_{i+1}\otimes \dots\otimes x_{n+1}\in \tau_\Delta.$

\begin{itemize}
  \item

If $x_ix_{i+1}$ belongs to a $k$-algebra $A_z$, then

\begin{itemize}
\item
either $x_i\in A_z$ and $x_{i+1}\in A_z$, hence $\sigma$ is obtained from $\tau$ by increasing by one the waiting time at the vertex $z$, that is $\sigma\in\tau_0^+$,
\item
or $x_i\in M_{a''}$ and $x_{i+1}\in M_{a'}$ for a path $a''a'$ which is parallel to the vertex $z$, that is $\sigma\in\tau_2^+$.

\end{itemize}

\item
If $x_ix_{i+1}$ belongs to a bimodule $M_{a} $ for some arrow  $a$ of $\omega$, then
\begin{itemize}
\item
either $x_i\in A_{t(a)}$ and $x_{i+1}\in M_{a}$,  or $x_i\in M_{a}$  and $x_{i+1}\in A_{s(a)}$ that is, $\sigma\in \tau_0^+$
\item

or $x_i\in M_{a''}$ and $x_{i+1}\in M_{a'}$ for a path of  $a''a'$ which is parallel to $a$ that is, $\sigma\in\tau_2^+$.
\end{itemize}

\end{itemize}
\item[-]
If the last summand is non zero, the proof is analogous to the first case.
\end{itemize}
\qed
\end{proof}

\section{\sf  A long exact sequence for algebras arising from categories with zero compositions}\label{alongexactseq}

The result of Proposition \ref{preserve} can be made more precise for algebras arising from $k$-categories with finitely many objects and zero compositions.  Let $Q$ be a simply laced quiver provided with a $Q$-set $\Delta=(A,M)$, where $A=\times_{x\in Q_0}A_x$ and $M=\oplus_{a\in Q_1} M_a$. Let $\Lambda_\Delta$ be the corresponding algebra.

\begin{defi}
Let $\omega\in Q_m$. The vector space of \emph{$n$-cochains along $\omega$} is
$$\JJ^n_\omega= \bigoplus_{\tau\in T_n(\omega)}\JJ_\tau = \bigoplus_{\tau\in T_n(\omega)}\Hom_k\left(\tau_\Delta, \Delta_\omega\right).$$
\end{defi}
\begin{rema}
Let $\omega\in Q_m$. If $n<m$, then $\JJ^n_\omega=0$. If $n=m$ and  $\omega=a_m\dots a_1$, then $$\JJ^m_\omega= \Hom_k(M_{a_m}\otimes\dots\otimes M_{a_1}, \Delta_\omega).$$
\end{rema}

\begin{prop}\label{subcomplex}
If $\delta\in DQ_m$, then $\JJ^\bullet_\delta$ is a subcomplex of $\JJ^{\bullet}(\Lambda_\Delta)$.
\end{prop}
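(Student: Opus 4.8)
The plan is to reduce everything to Proposition~\ref{preserve}, which gives, for a trajectory $\tau\in T_n(\omega)$, the inclusion $d\,\JJ_\tau\subseteq\bigoplus_{\sigma\in\tau^+}\JJ_\sigma$ with $\tau^+=\tau_0^+\cup\tau_1^+\cup\tau_2^+$. Since $\JJ^n_\delta=\bigoplus_{\tau\in T_n(\delta)}\JJ_\tau$, it is enough to check that for $\tau\in T_n(\delta)$ the only components of $d f_\tau$ that can be nonzero are indexed by trajectories that again lie over $\delta$, i.e.\ by elements of $T_{n+1}(\delta)$. The trajectories in $\tau_0^+$ are obtained from $\tau$ only by raising one waiting time, so they lie over the same path $\delta$ and are harmless; the real task is to show that the $\tau_1^+$- and $\tau_2^+$-components of $d f_\tau$, which would change the underlying path, all vanish in the zero-composition setting.

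For the $\tau_2^+$-components: inspecting the proof of Proposition~\ref{preserve}, such a component can only arise from a summand $f_\tau(x_1\otimes\cdots\otimes x_ix_{i+1}\otimes\cdots\otimes x_{n+1})$ in which $x_i\in M_{a''}$ and $x_{i+1}\in M_{a'}$ for a path $a''a'$ of length two. But $\Lambda_\Delta=A\oplus M$ with $M^2=0$ in the zero-composition case, so $x_ix_{i+1}=0$ and every $\tau_2^+$-component of $d f_\tau$ is zero. For the $\tau_1^+$-components I would use that $\delta$ is a non cycle, hence $s(\delta)\neq t(\delta)$; therefore $\Delta_\delta=t(\delta)\,\Lambda_\Delta\,s(\delta)=e_{t(\delta)}(A\oplus M)e_{s(\delta)}=e_{t(\delta)}Me_{s(\delta)}\subseteq M$, since $e_{t(\delta)}Ae_{s(\delta)}=0$ for distinct vertices. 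Again by the proof of Proposition~\ref{preserve}, a nonzero $\tau_1^+$-component of $d f_\tau$ comes only from the summand $x_1 f_\tau(x_2\otimes\cdots\otimes x_{n+1})$ with $x_1\in M_c$, $s(c)=t(\delta)$, or symmetrically from $f_\tau(x_1\otimes\cdots\otimes x_n)\,x_{n+1}$ with $x_{n+1}\in M_c$, $t(c)=s(\delta)$; but $f_\tau$ takes values in $\Delta_\delta\subseteq M$, so $x_1 f_\tau(\cdots)\in M\cdot M=0$, and likewise $f_\tau(\cdots)x_{n+1}\in M\cdot M=0$. Thus all $\tau_1^+$-components vanish as well.

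Putting the two steps together, $d f_\tau$ has only $\tau_0^+$-components, each lying in $\JJ^{n+1}_\delta$; summing over $\tau\in T_n(\delta)$ yields $d\,\JJ^n_\delta\subseteq\JJ^{n+1}_\delta$ for all $n$, which is exactly the assertion. I expect the $\tau_1^+$ step to be the only delicate point, and it is precisely there that the hypothesis that $\delta$ is a non cycle is needed: for a cycle $\delta$ one has $\Delta_\delta=A_{s(\delta)}$, which contains the identity and is not contained in $M$, so $M\cdot\Delta_\delta$ and $\Delta_\delta\cdot M$ need not be zero, and $\JJ^\bullet_\delta$ is in general only a subquotient — matching the fact that the pieces along cycles appear in the quotient complex rather than in a subcomplex.
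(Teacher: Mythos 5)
Your proposal is correct and follows essentially the same route as the paper: reduce to Proposition~\ref{preserve}, then kill the $\tau_1^+$-components using $M_c\cdot\Delta_\delta\subseteq M\cdot M=0$ (which is where non-cyclicity of $\delta$, via $\Delta_\delta\subseteq M$, enters) and the $\tau_2^+$-components using $M^2=0$. The only cosmetic difference is that you delegate the ``wrong direct summand'' vanishing of the remaining terms to the case analysis inside the proof of Proposition~\ref{preserve}, whereas the paper re-runs that analysis explicitly summand by summand; your closing observation about cycles matches the paper's Remark~\ref{notsub}.
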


\begin{proof}
We suppose $\Delta_\delta\neq 0$ since otherwise $\JJ_\delta^\bullet=0$.  The path $\delta$ is not a cycle,  so there exists $a\in Q_1$  parallel to $\delta$, and $\Delta_\delta = M_a$.

Let $\delta=a_m\dots a_1$, let $\tau=t(a_m)^{p_{m+1}},a_m,s(a_m)^{p_m},\cdots,s(a_2)^{p_2},a_1,s(a_1)^{p_1}$ be a trajectory of duration $n$ over $\delta$, and let $f_\tau \in \JJ_\tau$.  If $\sigma$ is a trajectory of duration $n+1$ which is not over $\delta$, we will prove that $(d f_\tau)_\sigma=0$.
We  already know from Proposition \ref{preserve} that if $\sigma\notin\tau^+$, then $(d f_\tau)_\sigma =0$.

\begin{itemize}

\item
If $\sigma\in\tau^+_0$, there is nothing to prove since $\sigma$ is over $\delta$, as $\sigma$ is obtained by increasing some waiting time of $\tau$ by one.
\item
If $\sigma\in\tau^+_1$, then $\sigma$ is not over $\delta$ and we will show that $(d f_\tau)_\sigma =0$. Suppose firstly that
$$\sigma= t(c)^0, c,s(c)^{p_{m+1}},a_m,s(a_m)^{p_m},\cdots,s(a_2)^{p_2},a_1,s(a_1)^{p_1}$$
for some arrow $c$ such that $s(c)=t(a_m)$.
   Let $x_1\otimes x_2\otimes \dots\otimes x_{n+1}\in \sigma_\Delta$, we recall that
\begin{align*}
(d f_\tau)(x_1\otimes x_2\otimes \dots\otimes x_{n+1}) &= x_1f_\tau(x_2\otimes x_3\otimes  \dots \otimes x_{n+1}) \\
&+\sum_{i=1}^{n}(-1)^i f_\tau(x_1\otimes \dots\otimes x_ix_{i+1}\otimes \dots\otimes x_{n+1}) \\
&+(-1)^{n+1}f_\tau(x_1\otimes x_2\otimes \dots \otimes x_{n})x_{n+1}.
\end{align*}
We will show that each of the previous summands is zero.
For the first one,  $x_1\in M_c$ and $f_\tau(x_2\otimes x_3\otimes  \dots \otimes x_{n+1})\in M_a$. Hence this  summand belongs to the product $M_c M_a$, which is zero because $\Lambda_\Delta$ is a a $k$-algebra arising from a $k$-category with zero compositions.

We consider now $f_\tau(x_1\otimes \dots\otimes x_ix_{i+1}\otimes \dots\otimes x_{n+1})$ for $i=1,\dots, n$.
 Notice that $x_1\otimes \dots\otimes x_ix_{i+1}\otimes \dots\otimes x_{n+1}\in \tau'_\Delta$ where $\tau'$ is a trajectory over $c\delta$, hence $\tau'\neq\tau$. Then this summand is $0$.

For the last summand,  note that $x_1\otimes x_2\otimes \dots \otimes x_{n}\in \tau'_\Delta$, where $\tau'$ is a trajectory over a path which last arrow is $c$. Note that $c\neq a_m$, since there are no loops in $Q$. Hence $\tau'\neq \tau$ and the summand is $0$.

The case where $\sigma\in \tau_1^+$ is obtained from $\tau$ by adding at the beginning an arrow $c$ such that $t(c)=s(a_1)$ is analogous.
\item
If $\sigma\in\tau_2^+$, suppose
$$\sigma= t(a_m)^{p_{m+1}},a_m,s(a_m)^{p_m},\cdots,a''_j,a_j',\cdots,s(a_2)^{p_2},a_1,s(a_1)^{p_1}$$
and let $x_1\otimes x_2\otimes \dots\otimes x_{n+1}\in \sigma_\Delta.$

The first summand of $(d f_\tau)(x_1\otimes x_2\otimes \dots\otimes x_{n+1})$ is zero since $x_2\otimes \dots\otimes x_{n+1}$ belongs to $\tau'_\Delta$ for $\tau'\neq \tau$ beacuse $\tau'$ is a trajectory over a path different from $\delta$. Then
$$f_\tau(x_2\otimes x_3\otimes  \dots \otimes x_{n+1})=0.$$
The proof that the last summand is zero is analogous.

The other summands are also zero: if $x_{i+1}\in M_{a''_j}$ and $x_i\in M_{a'_j}$, then $x_{i+1}x_i=0$ because the products of elements of the bimodules are zero in $\Lambda_\Delta$. Otherwise, observe that $x_1\otimes \dots\otimes x_ix_{i+1}\otimes \dots\otimes x_{n+1}\in \tau'_\Delta$ where $\tau'$ is a trajectory over a path different from $\delta$, then $$f_\tau(x_1\otimes \dots\otimes x_ix_{i+1}\otimes \dots\otimes x_{n+1})=0.$$
\end{itemize}
\qed
\end{proof}
\begin{defi}\label{deltacoho}
Let $\Lambda_\Delta$ be a $k$-algebra arising from a $k$-category with zero compositions, with respect to a $Q$-set $\Delta=(A,M)$. Let $\delta\in DQ_m$.

For $n\geq m$, the \emph{cohomology of $\Delta$ along $\delta$} is denoted $\HH_\delta^n(\Delta)$, and it is the cohomology of the complex of cochains $\JJ_\delta^\bullet$.
\end{defi}

From the above proof, we record the next result for the coboundary of $\JJ_\delta^\bullet$.

\begin{prop}\label{dftau}
Let $\delta\in DQ_m$ and let $\tau\in T_n(\delta)$. Let $f_\tau\in \JJ_\tau.$ The coboundary $d : \JJ_\delta^n\to \JJ_\delta^{n+1}$ is given by
$$df_\tau = \sum_{\sigma\in \tau^+_0} (df_\tau)_\sigma.$$

\end{prop}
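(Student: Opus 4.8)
The plan is to revisit the proof of Proposition \ref{subcomplex} and extract from it the precise description of which summands of $df_\tau$ survive. Recall that Proposition \ref{preserve} already tells us that $df_\tau \in \bigoplus_{\sigma\in\tau^+}\JJ_\sigma$, where $\tau^+ = \tau_0^+\cup\tau_1^+\cup\tau_2^+$. So it suffices to show that for $\delta$ a non cycle and $\tau\in T_n(\delta)$, every component $(df_\tau)_\sigma$ with $\sigma\in\tau_1^+\cup\tau_2^+$ vanishes, leaving only the contributions from $\sigma\in\tau_0^+$ (the trajectories obtained by increasing one waiting time, which are again trajectories over $\delta$).

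First I would note that since the statement is vacuous when $\Delta_\delta = 0$, we may assume $\Delta_\delta = M_a\neq 0$ for the arrow $a$ parallel to $\delta$, exactly as in the proof of Proposition \ref{subcomplex}. Then I would simply invoke that proof: it was shown there, case by case, that for $\sigma\in\tau_1^+$ every summand of the Hochschild coboundary formula evaluated on an element of $\sigma_\Delta$ is zero — the ``outer'' terms $x_1 f_\tau(\cdots)$ and $f_\tau(\cdots)x_{n+1}$ land in a product $M_cM_a$ or $M_aM_c$ which is zero because $\Lambda_\Delta$ arises from a category with zero compositions, and the ``inner'' terms $f_\tau(\cdots x_ix_{i+1}\cdots)$ vanish because the argument then lies in $\tau'_\Delta$ for a trajectory $\tau'\neq\tau$ over a path other than $\delta$. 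Likewise for $\sigma\in\tau_2^+$, the inner multiplication either produces a product of two bimodule elements (zero by $M^2=0$, i.e. zero compositions) or again feeds $f_\tau$ an element outside $\tau_\Delta$, while the outer terms again evaluate $f_\tau$ outside $\tau_\Delta$. Hence $(df_\tau)_\sigma = 0$ for all $\sigma\notin\tau_0^+$.

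Consequently $df_\tau = \sum_{\sigma\in\tau_0^+}(df_\tau)_\sigma$, which is the asserted formula. I do not anticipate any real obstacle here, since the proposition is essentially a bookkeeping corollary of the computation already carried out for Proposition \ref{subcomplex}; the only thing to be careful about is making explicit that the set $\tau_0^+$ indexing the surviving summands consists precisely of trajectories over $\delta$ (so that the right-hand side indeed lies in $\JJ_\delta^{n+1}$), which is immediate from Definition \ref{plustrajectories} since increasing a waiting time does not alter the underlying path.

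\begin{proof}
If $\Delta_\delta = 0$ then $\JJ^\bullet_\delta = 0$ and there is nothing to prove, so assume $\Delta_\delta = M_a$ for the arrow $a$ parallel to $\delta$. By Proposition \ref{preserve} we have $df_\tau = \sum_{\sigma\in\tau^+}(df_\tau)_\sigma$ with $\tau^+ = \tau_0^+\cup\tau_1^+\cup\tau_2^+$. In the proof of Proposition \ref{subcomplex} it is shown that $(df_\tau)_\sigma = 0$ whenever $\sigma\in\tau_1^+$ or $\sigma\in\tau_2^+$: indeed, evaluating the Hochschild coboundary formula on any element $x_1\otimes\cdots\otimes x_{n+1}\in\sigma_\Delta$, each outer summand lies in a product of the form $M_cM_a$ or $M_aM_c$, which vanishes since $\Lambda_\Delta$ arises from a $k$-category with zero compositions, while each inner summand either contains a product $x_ix_{i+1}$ of two elements of bimodules (hence zero, as $M^2=0$) or feeds $f_\tau$ an element of $\tau'_\Delta$ for a trajectory $\tau'\neq\tau$ lying over a path different from $\delta$, hence is again zero. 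Therefore the only possibly nonzero components of $df_\tau$ are those indexed by $\sigma\in\tau_0^+$. Finally, by Definition \ref{plustrajectories} every $\sigma\in\tau_0^+$ is obtained from $\tau$ by increasing one waiting time by one, so $\sigma$ is again a trajectory of duration $n+1$ over $\delta$ and $(df_\tau)_\sigma\in\JJ^{n+1}_\delta$. This proves $df_\tau = \sum_{\sigma\in\tau_0^+}(df_\tau)_\sigma$.
\qed
\end{proof}
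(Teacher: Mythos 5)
Your proof is correct and follows exactly the paper's own argument: decompose $df_\tau$ via Proposition \ref{preserve} and then invoke the case analysis already carried out in the proof of Proposition \ref{subcomplex} to kill the $\tau_1^+$ and $\tau_2^+$ components when $\delta$ is a non cycle. (The only cosmetic quibble is that not every ``outer'' summand vanishes via a zero product $M_cM_a$ --- for some of them the vanishing comes from $f_\tau$ being evaluated on $\tau'_\Delta$ with $\tau'\neq\tau$ --- but since you are citing the earlier proof, which handles each case correctly, this does not affect the argument.)
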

\begin{proof}
We know from Proposition \ref{preserve} that
\begin{equation}\label{d}
df_\tau = \sum_{\sigma\in \tau^+} (df_\tau)_\sigma= \sum_{\sigma\in \tau_0^+} (df_\tau)_\sigma\ +\ \sum_{\sigma\in \tau_1^+} (df_\tau)_\sigma +\ \sum_{\sigma\in \tau_2^+} (df_\tau)_\sigma.
\end{equation}
We have just shown in the previous proof that if $\delta$ is not a cycle, and if $\sigma\in\tau_1^+\cup \tau_2^+$, then $(df_\tau)_\sigma=0$.

 \qed
\end{proof}

\begin{defi}\label{Momega} Let $\omega=a_m \dots a_1\in Q_m$, where $a_i\in Q_1$ for all $i$. The \emph{bimodule along $\omega$} is
$$M_\omega= M_{a_m}\otimes_{A_{s(a_m)}}\cdots \otimes_{A_{s(a_2)}} M_{a_1}.$$
\end{defi}

Note that $M_\omega$ is an $A_{t(\omega)}-A_{s(\omega)}$-bimodule.

\begin{lemm}\label{HHdelta1}
Let $\Delta$ be a $Q$-set, and let $\delta\in DQ_m$. The following holds:
$$\HH_\delta^m(\Delta)= \Hom_{A_{t(\delta)}-A_{s(\delta)}} \left( M_\delta, \Delta_\delta\right).$$
In particular if $a\in Q_1$,
$$\HH_a^1(\Delta)= \End_{A_{t(a)}-A_{s(a)}} M_a.$$
\end{lemm}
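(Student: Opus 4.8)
The claim is that the cohomology along a non-cycle $\delta = a_m\cdots a_1$ in degree $m$ (the lowest nonvanishing degree) is exactly the bimodule homomorphisms from $M_\delta$ to $\Delta_\delta$. The strategy is to compute the kernel of $d\colon \JJ^m_\delta \to \JJ^{m+1}_\delta$ directly, using the description of the coboundary provided by Proposition \ref{dftau}. First I would recall from the Remark after Definition \ref{Momega}'s predecessor that $T_m(\delta)$ has a unique element $\tau_0$ of total waiting time zero, namely $t(a_m)^0,a_m,s(a_m)^0,\dots,a_1,s(a_1)^0$, so that $\JJ^m_\delta = \JJ_{\tau_0} = \Hom_k(M_{a_m}\otimes_k\cdots\otimes_k M_{a_1},\ \Delta_\delta)$. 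Since $(\tau_0)^+_0$ consists of the $m+1$ trajectories obtained from $\tau_0$ by bumping exactly one of the waiting times $p_1,\dots,p_{m+1}$ from $0$ to $1$, the coboundary $d f_{\tau_0}$ lives in the direct sum of the corresponding $m+1$ spaces $\JJ_\sigma$, and $f_{\tau_0}\in\ker d$ iff each of these $m+1$ components vanishes.

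The key step is to unwind what each of these $m+1$ vanishing conditions says. A trajectory $\sigma$ obtained by raising the waiting time at the source of $a_j$ (for $1<j\le m$, i.e. an ``interior'' vertex) has $\sigma_\Delta = M_{a_m}\otimes\cdots\otimes M_{a_j}\otimes A_{s(a_j)}\otimes M_{a_{j-1}}\otimes\cdots\otimes M_{a_1}$; the Hochschild differential formula applied to $f_{\tau_0}$ on such an element produces only two surviving terms, $\pm\bigl(f_{\tau_0}(x_m\otimes\cdots\otimes x_j x_{j+1/2}\otimes\cdots) - f_{\tau_0}(\cdots\otimes x_{j-1/2}x_{j-1}\otimes\cdots)\bigr)$ — all other terms land in trajectories not in $(\tau_0)^+$ and are killed, and the ``leading'' and ``trailing'' terms $x_1 f(\cdots)$ and $f(\cdots)x_{n+1}$ only survive when the bumped vertex is an endpoint. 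Hence the vanishing of these $m-1$ interior components says precisely that $f_{\tau_0}$ is $A_{s(a_j)}$-balanced at each interior slot, i.e. it factors through the tensor products over the interior algebras, yielding a well-defined $k$-linear map $\widetilde f\colon M_\delta = M_{a_m}\otimes_{A_{s(a_m)}}\cdots\otimes_{A_{s(a_2)}}M_{a_1}\to \Delta_\delta$. The two remaining components — bumping the waiting time at $t(a_m)$ and at $s(a_1)$ — by the same computation contribute $x_1 f(\cdots)$ at the top and $f(\cdots)x_{n+1}$ at the bottom, so their vanishing says exactly that $\widetilde f$ is left $A_{t(\delta)}$-linear and right $A_{s(\delta)}$-linear. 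This identifies $\ker d$ with $\Hom_{A_{t(\delta)}-A_{s(\delta)}}(M_\delta,\Delta_\delta)$, and since $\JJ^{m-1}_\delta = 0$ there is nothing to quotient by, giving $\HH^m_\delta(\Delta)=\ker d$.

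The main obstacle, and the part demanding the most care, is the bookkeeping in the coboundary formula: one must verify rigorously that for an element of $\sigma_\Delta$ with $\sigma\in(\tau_0)^+_0$, every term $f_{\tau_0}(x_1\otimes\cdots\otimes x_ix_{i+1}\otimes\cdots)$ in the alternating sum lies outside $\tau_0{}_\Delta$ (hence vanishes) unless the multiplication $x_ix_{i+1}$ happens exactly at the bumped slot — this is where the fact that $\delta$ is a non-cycle and the zero-composition hypothesis are implicitly used, via Proposition \ref{dftau}, to discard the $\tau_1^+$ and $\tau_2^+$ contributions. Once that is in hand, the signs work out so that the interior conditions are genuine balancing (tensor-over-$A_{s(a_j)}$) relations rather than something weaker. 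The ``in particular'' statement for $a\in Q_1$ is then immediate: $m=1$, $M_a = M_{a_1}$ has no interior tensor slots, $\Delta_a = M_a$ since $a$ is parallel to itself, and the two endpoint conditions say $f$ is an $A_{t(a)}-A_{s(a)}$-bimodule endomorphism of $M_a$, so $\HH^1_a(\Delta)=\End_{A_{t(a)}-A_{s(a)}}M_a$.
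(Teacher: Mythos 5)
Your proposal is correct and follows essentially the same route as the paper: identify $\JJ^m_\delta$ with $\Hom_k(M_{a_m}\otimes\cdots\otimes M_{a_1},\Delta_\delta)$ via the unique zero-waiting-time trajectory, note that the differential only hits the $m+1$ trajectories in $\tau_0^+$ obtained by bumping one waiting time, and read off balancedness at the $m-1$ interior vertices and bimodule-linearity at the two endpoints. The paper's proof is exactly this computation (and, like you, it relies on Proposition \ref{dftau} to discard the $\tau_1^+$ and $\tau_2^+$ contributions), so no further comparison is needed.
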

\begin{proof}
The complex of cochains $\JJ_\delta^\bullet(\Lambda)$ begins as follows:
\small
$$0\to \Hom_k(M_{a_m}\otimes\dots\otimes M_{a_1},\ \Delta_\delta ) \stackrel{d}{\to} \bigoplus_{\tau\in T_{m+1}(\delta)}\Hom_k(\tau_\Delta, \ \Delta_\delta)\stackrel{d}{\to} \cdots$$
\normalsize
Observe that $T_{m+1}(\delta)=\{\tau_i\}_{i=0,\dots, m+1}$, where $\tau_i$ for $i=0,\dots, m$ is obtained from the trajectory with total waiting time $0$ by increasing the waiting time at $s(a_i)$ by one, while $\tau_{m+1}$ is obtained by increasing by one $t(a_m)$. Hence, for $i=0\dots,m$:
$$(\tau_i)_\Delta=M_{a_m}\otimes\dots\otimes M_{a_i}\otimes A_{s(a_i)}\otimes M_{a_{i-1}}\dots\otimes M_{a_1}$$and
$$(\tau_{m+1})_\Delta = A_{t(a_m)}\otimes M_{a_m}\otimes\dots\otimes M_{a_1}.$$
Let $f\in \Hom_k(M_{a_m}\otimes\dots\otimes M_{a_1},\ \Delta_\delta)$ be such that $d f=0$, that is $(d f)_{\tau_i}=0$ for all $i$.
For $0<i<m+1$,  this shows that $f$ is $A_{s(i)}$-balanced, while the cases $i=0$ and $i=m+1$  prove that $f$ is a morphism of bimodules.\qed

\end{proof}

\begin{rema}
For $r\geq 0$ and $\delta\in DQ_m$,  we will prove in the next section that assuming adequate hypotheses, $$\HH_\delta^{m+r}(\Delta)= \Ext^r_{A_{t(\delta)}-A_{s(\delta)}} \left( M_\delta, \Delta_\delta\right).$$
\end{rema}

We have proven in Proposition \ref{subcomplex} that each non cycle $\delta$ provides a subcomplex $\JJ^\bullet_\delta$ of the complex $\JJ^\bullet(\Lambda_\Delta)$. This enables to consider their direct sum as follows.

\begin{defi}\label{non cyclesubcomplex}
Let $\Lambda_\Delta$ be a a $k$-algebra arising from a $k$-category with zero compositions, determined by a simply laced quiver $Q$ and a $Q$-set $\Delta=(A,M)$. The \emph{non cycle  subcomplex} of cochains $\DD^\bullet(\Lambda_\Delta)$ of $\JJ^\bullet(\Lambda_\Delta)$ is
$$\DD^n(\Lambda_\Delta)=\bigoplus_{\delta\in DQ} \JJ_\delta^n.$$

\end{defi}

\begin{rema}
We already know that if $\omega\in Q_m$ and $n<m$, then  $T_n(\omega)=\emptyset.$ For such $m$ and $n$, $$\DD^n(\Lambda_\Delta)=\bigoplus_{\delta\in DQ_{\leq n}} \JJ_\delta^n.$$ In particular $\DD^0(\Lambda_\Delta)=0$.
Note also that $$\HH^n(\DD^\bullet(\Lambda_\Delta))=\bigoplus_{\delta\in DQ_{\leq n}}\HH^n_\delta (\Delta).$$
\end{rema}

 We consider now the exact sequence of complexes of cochains, where $\Lambda_\Delta$ is omitted in the notation:
$$0\to\DD^\bullet \to \JJ^\bullet \to (\JJ/\DD)^\bullet\to 0.$$

Our next purpose is to describe the cohomology of the quotient complex $(\JJ/\DD)^\bullet$. Let $CQ$ be the set of cycles of $Q$ and let
$$\CC^n=\bigoplus_{\gamma\in CQ}\JJ_\gamma^n.$$
Recall that for $\gamma\in CQ$,
$$\JJ_\gamma^n= \bigoplus_{\tau\in T_n(\gamma)}\JJ_\tau^n=\bigoplus_{\tau\in T_n(\gamma)}\Hom_k(\tau_\Delta, \ \Delta_\gamma)$$
where $\Delta_\gamma = A_{s(\gamma)} = A_{t(\gamma)}$.
The set of  paths of $Q$ is the disjoint union of $DQ$ and $CQ$, hence at each degree there is a vector space decomposition
$$\JJ^n=\DD^n\oplus \CC^n$$
which provides a vector space identification between $\CC^n$ and $(\JJ/\DD)^n$.

\begin{rema}\label{notsub}
\
\begin{itemize}
\item
The vector spaces $\{\CC^n\}_{n\geq 0}$ are not in general a subcomplex of $\JJ^\bullet$. Indeed, let $\tau$ be a trajectory over a cycle $\gamma$, and let $f_\tau\in \JJ_\tau$. By Proposition \ref{preserve},  $df_\tau\in\bigoplus_{\sigma\in\tau^+}\JJ_\sigma^{n+1}$, where $\tau^+=\tau_0^+\cup \tau_1^+ \cup \tau_2^+$. Observe that  a trajectory  $\sigma\in\tau_1^+$ is not anymore over a cycle since it is obtained by adding an arrow either after the end of $\tau$ or before its beginning  - recall that there are no loops in $Q$.
Moreover, in general $(df_\tau)_\sigma$ is not zero:  the image of $f_\tau$ lies in $A_{s(\omega)}=A_{t(\omega)}$, hence the first and the last summands may be not zero.

\item
Notice that if $\sigma\in\tau_2^+$, then  $(df_\tau)_\sigma=0$. Indeed, the proof of the last item of Proposition \ref{subcomplex} is also valid for cycles.
\item
On the other hand, the trajectories in $\tau_0^+$ are over the same cycle $\gamma$, since they are obtained by increasing by one a waiting time of $\tau$.

\end{itemize}
\end{rema}

Let $\tau$ be a trajectory over a path $\omega$, and let $f_\tau\in\Hom_k\left(\tau_\Delta, \ \Delta_\omega\right)$. Recall that
$$df_\tau = \sum_{\sigma\in \tau^+_0} \left(df_\tau\right)_\sigma + \sum_{\sigma\in \tau^+_1} \left(df_\tau\right)_\sigma + \sum_{\sigma\in \tau^+_2} \left(df_\tau\right)_\sigma.$$

Summarizing, we have proven that the last summand is zero for algebras arising from categories with zero compositions, from now on we will omit it.

\begin{defi}
Let $\gamma\in CQ_m$, and let $\tau$ be a trajectory over $\gamma$ of duration $n$ - hence $n\geq m$.
Let $$d':\CC^{n}\to \CC^{n+1}$$ be the map defined by:
$$d'f_\tau= \sum_{\sigma\in\tau_0^+}(df_\tau)_\sigma.$$
\end{defi}
Notice that the image of $d'$ is indeed contained in $\CC^{n+1}$ since the trajectories of $\tau_0^+$ are over the cycle $\gamma$.
\begin{theo}
The complex of cochains $(\JJ/\DD)^\bullet$ with the induced differential $\overline{d}$ is isomorphic to $(\CC^\bullet, d')$.
\end{theo}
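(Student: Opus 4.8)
The plan is to make explicit the vector-space identification between $\CC^n$ and $(\JJ/\DD)^n$ already noted in the text, and then to check that the induced differential corresponds to $d'$. Recall that at each degree $n$ we have the direct sum decomposition $\JJ^n=\DD^n\oplus\CC^n$, where $\DD^n=\bigoplus_{\delta\in DQ}\JJ_\delta^n$ and $\CC^n=\bigoplus_{\gamma\in CQ}\JJ_\gamma^n$; this comes from the fact that the set of paths $Q_{\leq n}$ is the disjoint union $DQ_{\leq n}\sqcup CQ_{\leq n}$ together with the decomposition of $\JJ^n$ along trajectories (Proposition \ref{decompose}). Since $\DD^\bullet$ is a subcomplex (Proposition \ref{subcomplex} applied summand by summand), the composite $\CC^n\hookrightarrow\JJ^n\twoheadrightarrow(\JJ/\DD)^n$ is a linear isomorphism in every degree. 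So it remains only to prove that this family of isomorphisms is a chain map, i.e. that the diagram relating $d'$ on $\CC^\bullet$ and $\overline d$ on $(\JJ/\DD)^\bullet$ commutes.

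For this, first I would fix $\gamma\in CQ$, a trajectory $\tau\in T_n(\gamma)$ and $f_\tau\in\JJ_\tau\subset\CC^n$, and compute $df_\tau$ in $\JJ^{n+1}$ using Proposition \ref{preserve}: $df_\tau=\sum_{\sigma\in\tau_0^+}(df_\tau)_\sigma+\sum_{\sigma\in\tau_1^+}(df_\tau)_\sigma+\sum_{\sigma\in\tau_2^+}(df_\tau)_\sigma$. Now I invoke the three bullet points of Remark \ref{notsub}: the summand over $\tau_2^+$ vanishes (the argument of the last item of the proof of Proposition \ref{subcomplex} is valid for cycles as well, since it only uses $M^2=0$); every $\sigma\in\tau_0^+$ is again a trajectory over the same cycle $\gamma$, so $\sum_{\sigma\in\tau_0^+}(df_\tau)_\sigma=d'f_\tau\in\CC^{n+1}$; and every $\sigma\in\tau_1^+$ is a trajectory over a \emph{non cycle} (one gets it by prepending or appending an arrow to $\gamma$, and $Q$ has no loops), so $\sum_{\sigma\in\tau_1^+}(df_\tau)_\sigma\in\DD^{n+1}$. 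Hence $df_\tau=d'f_\tau+(\text{something in }\DD^{n+1})$, which says precisely that modulo $\DD^{n+1}$ the class of $df_\tau$ equals the class of $d'f_\tau$; i.e. $\overline d$ applied to the image of $f_\tau$ is the image of $d'f_\tau$. Extending by linearity over all $\gamma$ and all $\tau$ gives the commutativity of the square for arbitrary cochains in $\CC^n$.

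Finally I would note the boundary cases: for $n=0$ we have $\DD^0(\Lambda_\Delta)=0$ and $\CC^0=\bigoplus_{x\in Q_0}\JJ_x^0=\JJ^0$, so the identification and the chain-map property are trivial there; one should also observe that $d'$ as defined does land in $\CC^{n+1}$, which is exactly the content of the sentence immediately preceding the theorem. Putting these together, $(\CC^\bullet,d')\to(\JJ/\DD)^\bullet$ is an isomorphism of complexes. I do not anticipate a serious obstacle: the only mildly delicate point is bookkeeping — being careful that ``$\sigma$ over a cycle'' versus ``$\sigma$ over a non cycle'' correctly separates the $\tau_0^+$ and $\tau_1^+$ contributions — but this is already fully justified by Remark \ref{notsub}, so the proof is essentially an assembly of facts proved earlier.
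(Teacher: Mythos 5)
Your proof is correct and follows essentially the same route as the paper: it is a fully spelled-out version of the paper's one-line argument, using the splitting $\JJ^n=\DD^n\oplus\CC^n$ and the three observations of Remark \ref{notsub} to show that $df_\tau=d'f_\tau$ plus a term in $\DD^{n+1}$, hence $\overline d$ corresponds to $d'$ under the identification.
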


\begin{proof}
Remark \ref{notsub} shows that through the mentioned identification between $(\JJ/\DD)^\bullet$ and $\CC^\bullet$, the coboundary $\overline{d}$ becomes $d'$. \qed
\end{proof}

The subcomplex  $\DD^\bullet$ of $\JJ^{\bullet}$ is a direct sum of subcomplexes indexed by non cycles.  For $\delta\in DQ$ we have defined  its cohomology $\HH^\bullet_\delta$.

We observe that a similar situation is in force for the quotient complex of cochains $((\JJ/\DD)^\bullet, \overline{d})=(\CC^\bullet, d')$, namely this quotient is the direct sum of subcomplexes indexed by $CQ$.

Moreover the differential $d'$ has the same description than the one given in Proposition \ref{dftau} for $d$. This enables us to define globally the cohomology along  an arbitrary path as follows.

\begin{defi}\label{omegacoho}
Let $Q$ be a simply laced quiver with a $Q$-set $\Delta=(A,M)$, and let $\omega\in Q_m$. For $n\geq m$, the \emph{cohomology along $\omega$ of $\Delta$} in degree $n$ is denoted $\HH^n_\omega(\Delta)$ and it is the cohomology of the following complex of cochains $(\KK_\omega^\bullet, \dd)$

$$0\to \Hom_k(M_{a_m}\otimes\dots\otimes M_{a_1},\ \Delta_\omega ) \stackrel{\dd}{\to}\cdots$$$$\stackrel{\dd}{\to}\bigoplus_{\tau \in T_n(\omega)} \Hom_k(\tau_\Delta, \Delta_\omega) \stackrel{\dd}{\to}\bigoplus_{\tau \in T_{n+1}(\omega)} \Hom_k(\tau_\Delta, \Delta_\omega)\stackrel{\dd}{\to}\cdots$$

where $$\dd f_\tau=\sum_{\sigma\in \tau^+_0} (df_\tau)_\sigma.$$
\end{defi}

\begin{rema} Let $Q$ be a simply laced quiver with $Q$-set $\Delta=(A,M)$.
\begin{itemize}

\item If $\omega=\delta$ is a non cycle, then  $(\KK^\bullet_\delta, \dd) = (\JJ^\bullet_\delta, d)$ and the previous definition agrees with Definition \ref{deltacoho}.

\item If $\omega=\gamma$ is a cycle,  then we have seen just before that $(\KK_\gamma^\bullet, \dd)$ is a direct summand of the quotient complex $((\JJ/\DD)^\bullet,\overline{d})=(\CC^\bullet, d')$.

\item If $\omega=x$ is a vertex, that is a cycle of length $0$, then $\KK_x^\bullet$ is the usual complex which computes the Hochschild cohomology of $A_x$. Hence
$$\HH_x^n(\Delta)=\HH\HH^n(A_x).$$

\end{itemize}
\end{rema}

The proof of the following result is clear.

\begin{prop}
Let $\Lambda$ be a $k$-algebra arising from a $k$-category with zero compositions, with simply laced quiver $Q$ and $Q$-set $\Delta=(A,M)$. Let $\JJ^\bullet$ be the complex of cochains of Lemma \ref{byseparable}, which computes $\HH\HH^\bullet(\Lambda_\Delta)$. Let $\DD^\bullet$ be the non cycle subcomplex, let $(\JJ/\DD)^\bullet$ be the quotient, and let $\HH^*(\DD^\bullet)$ and $\HH^*((\JJ/\DD)^\bullet)$ be their respective cohomologies.

The following decompositions hold:

$$\HH^n(\DD^\bullet)=\bigoplus_{\delta\in DQ_{\leq n} } \HH^n_\delta(\Delta),$$

$$\HH^n((\JJ/\DD)^\bullet)=\bigoplus_{\gamma\in CQ_{\leq n} } \HH^n_\gamma(\Delta).$$

\end{prop}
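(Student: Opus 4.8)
The plan is to observe that essentially all the necessary work has already been done, and the proof amounts to assembling the pieces. Recall that by Lemma \ref{byseparable}, $\JJ^\bullet$ computes $\HH\HH^\bullet(\Lambda_\Delta)$, and by Proposition \ref{subcomplex} each non cycle $\delta$ gives a subcomplex $\JJ_\delta^\bullet$, so that $\DD^\bullet=\bigoplus_{\delta\in DQ}\JJ_\delta^\bullet$ is genuinely a subcomplex of $\JJ^\bullet$. Since cohomology commutes with finite direct sums of complexes, and since $\JJ_\delta^n=0$ whenever $n<m$ for $\delta\in DQ_m$, we immediately get
$$\HH^n(\DD^\bullet)=\bigoplus_{\delta\in DQ}\HH^n(\JJ_\delta^\bullet)=\bigoplus_{\delta\in DQ_{\leq n}}\HH^n_\delta(\Delta),$$
the last equality being the definition of $\HH^n_\delta(\Delta)$ together with the vanishing just noted. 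This handles the first displayed decomposition.

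For the second decomposition, the key point is the Theorem preceding Definition \ref{omegacoho}, which identifies the quotient complex $((\JJ/\DD)^\bullet,\overline{d})$ with $(\CC^\bullet,d')$, where $\CC^n=\bigoplus_{\gamma\in CQ}\JJ_\gamma^n$. The next thing to record is that $(\CC^\bullet,d')$ itself decomposes as a direct sum of complexes indexed by $CQ$: this is exactly the content of Remark \ref{notsub}, which shows that for a trajectory $\tau$ over a cycle $\gamma$, the differential $d'f_\tau=\sum_{\sigma\in\tau_0^+}(df_\tau)_\sigma$ only involves trajectories over the same cycle $\gamma$ (since increasing a waiting time does not change the underlying path). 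Hence $(\CC^\bullet,d')=\bigoplus_{\gamma\in CQ}(\KK_\gamma^\bullet,\dd)$ as complexes, and passing to cohomology, again using that cohomology commutes with finite direct sums together with the vanishing $T_n(\gamma)=\emptyset$ for $n<m$ when $\gamma\in CQ_m$, we obtain
$$\HH^n((\JJ/\DD)^\bullet)=\HH^n(\CC^\bullet,d')=\bigoplus_{\gamma\in CQ}\HH^n(\KK_\gamma^\bullet)=\bigoplus_{\gamma\in CQ_{\leq n}}\HH^n_\gamma(\Delta),$$
the last step being Definition \ref{omegacoho} of the cohomology along a cycle.

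The order of the steps is thus: first invoke Lemma \ref{byseparable} and Proposition \ref{subcomplex} to set up $\DD^\bullet\subset\JJ^\bullet$; then use additivity of cohomology over finite direct sums plus the trajectory-vanishing remarks to get the $\DD^\bullet$ formula; then invoke the Theorem before Definition \ref{omegacoho} to replace $(\JJ/\DD)^\bullet$ by $(\CC^\bullet,d')$; then use Remark \ref{notsub} to split $(\CC^\bullet,d')$ along cycles; and finally read off the $(\JJ/\DD)^\bullet$ formula from Definition \ref{omegacoho}. I do not anticipate any real obstacle here: the statement is labelled as a proposition whose proof is ``clear'' precisely because every nontrivial ingredient — the subcomplex property, the identification of the quotient, the cycle-wise decomposition — has been established in the preceding pages. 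The only mild subtlety worth a sentence is the bookkeeping that $DQ$ and $CQ$ are both finite (the quiver is finite) so that the direct sums are finite and cohomology distributes over them, and that in degree $n$ only paths of length $\leq n$ contribute, which is what turns $DQ$ and $CQ$ into $DQ_{\leq n}$ and $CQ_{\leq n}$ in the final formulas.
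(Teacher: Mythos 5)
Your proof is correct and is exactly the assembly the paper intends (it declares the proof ``clear'' precisely because Proposition \ref{subcomplex}, the identification of $((\JJ/\DD)^\bullet,\overline{d})$ with $(\CC^\bullet,d')$, and the observation that $d'$ preserves the summand indexed by each cycle do all the work). One small correction to your final bookkeeping sentence: when $Q$ has oriented cycles the index sets $DQ$ and $CQ$ are in general \emph{infinite} even though $Q$ is finite, so finiteness of the direct sums is not the right justification; what actually saves the step is either that cohomology commutes with arbitrary direct sums of cochain complexes, or that in each fixed degree $n$ only the finitely many summands indexed by paths of length at most $n$ are nonzero.
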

\begin{coro}
For a $k$-algebra arising from a $k$-category with zero compositions as before,  $\HH\HH^*(A)$ is a direct summand of $\HH^*((\JJ/\DD)^\bullet)$.
\end{coro}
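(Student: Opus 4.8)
The plan is to combine the previous proposition with the observation that the set of cycles $CQ$ always contains the vertices $Q_0$, which are precisely the cycles of length zero. First I would recall from the immediately preceding proposition that
$$\HH^n\big((\JJ/\DD)^\bullet\big)=\bigoplus_{\gamma\in CQ_{\leq n}}\HH^n_\gamma(\Delta),$$
and that, by the last item of the remark following Definition \ref{omegacoho}, for a vertex $x\in Q_0$ one has $\HH^n_x(\Delta)=\HH\HH^n(A_x)$. Since $Q_0=CQ_0\subset CQ_{\leq n}$ for every $n\geq 0$, the vertices contribute a direct summand $\bigoplus_{x\in Q_0}\HH\HH^n(A_x)$ to the right-hand side.

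Next I would identify this summand with $\HH\HH^n(A)$. Recall that $A=\times_{x\in Q_0}A_x$ is a finite product of algebras, and Hochschild cohomology of a finite product decomposes as the product (equivalently, direct sum, since the index set is finite) of the Hochschild cohomologies of the factors: $\HH\HH^n(A)\cong\bigoplus_{x\in Q_0}\HH\HH^n(A_x)$. This is standard — it follows, for instance, from the fact that $A\otimes A^{\op}=\prod_{x,y}A_x\otimes A_y^{\op}$ and that the bimodule $A$ is supported only on the diagonal blocks $A_x\otimes A_x^{\op}$, so $\Ext^n_{A\otimes A^{\op}}(A,A)$ splits accordingly. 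Putting these together, $\HH\HH^n(A)\cong\bigoplus_{x\in Q_0}\HH^n_x(\Delta)$ is a direct summand of $\bigoplus_{\gamma\in CQ_{\leq n}}\HH^n_\gamma(\Delta)=\HH^n\big((\JJ/\DD)^\bullet\big)$, the complement being $\bigoplus_{\gamma\in CQ_{\leq n},\ \gamma\notin Q_0}\HH^n_\gamma(\Delta)$.

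Since this holds in every degree $n$ and the decomposition is visibly compatible with the grading, we obtain that $\HH\HH^\bullet(A)$ is a direct summand of $\HH^\bullet\big((\JJ/\DD)^\bullet\big)$ as graded vector spaces, which is the claim. There is essentially no obstacle here: the only point requiring a word of justification is the product decomposition of Hochschild cohomology for $A=\times_{x\in Q_0}A_x$, and even this is already implicit in the machinery of the paper, since the complex $\JJ^\bullet(\Lambda_\Delta)$ restricted to the length-zero cycles is exactly $\bigoplus_{x\in Q_0}\KK_x^\bullet$, the direct sum of the standard Hochschild complexes of the $A_x$. Hence the corollary is an immediate formal consequence of the preceding proposition, which is why the statement can legitimately be asserted to have a clear proof.
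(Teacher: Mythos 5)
Your argument is correct and is exactly the one the paper intends: the corollary is an immediate consequence of the preceding proposition together with the remark that $\HH^n_x(\Delta)=\HH\HH^n(A_x)$ for a vertex $x$, plus the standard identification $\HH\HH^n(A)\cong\bigoplus_{x\in Q_0}\HH\HH^n(A_x)$ for the product algebra $A=\times_{x\in Q_0}A_x$ (which the paper itself uses in the low-degree computations that follow). Nothing further is needed.
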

As particular cases of the previous proposition, we obtain the following  results in low degrees:

\begin{itemize}
\item $\HH^0(\DD^\bullet)=0,$ since there are no non cycles of length less or equal to zero.

\item
$\HH^1(\DD^\bullet)=\bigoplus_{a\in Q_1}\HH_a^1(\Delta)=\bigoplus_{a\in Q_1}\End_{A_{t(a)}-A_{s(a)}} M_a = \End_{A-A}M,$
by Lemma \ref{HHdelta1}.

\item $\HH^0((\JJ/\DD)^\bullet)= \times_{x\in Q_0} \ZZ A_x = \ZZ A,$  that is the center of $A$. The last item of the previous remark provides the equality.

\item $\HH^1((\JJ/\DD)^\bullet)= \bigoplus_{x\in Q_0}\HH\HH^1(A_x) = \HH\HH^1(A),$
since there are no loops in $Q$, that is $CQ_{\leq 1}= Q_0$, and as a consequence of the last item of the previous remark.

\end{itemize}

\normalsize

\begin{theo}\label{les}
Let $\Lambda_\Delta$ be a $k$-algebra arising from a $k$-category with zero compositions with simply laced quiver $Q$ and $Q$-set $\Delta=(A,M)$. There is a cohomology long exact sequence as follows:
\small
$$
\arraycolsep=2mm\def\arraystretch{1,4}
\begin{array}{lllllllccc}
0 &\to  &0 &\to &\HH\HH^0(\Lambda_\Delta) &\to &\HH\HH^0(A) &\to\\
&&\End_{A-A}M &\to &\HH\HH^1(\Lambda_\Delta) &\to &\HH\HH^1(A) &\to\\
&&\bigoplus_{\delta\in DQ_{\leq 2}}\HH^2_\delta(\Delta) &\to &\HH\HH^2(\Lambda_\Delta) &\to &\HH\HH^2(A)\oplus\bigoplus_{\gamma\in CQ_{2}}\HH^2_\gamma(\Delta)&\to\\
&&\dots\\
&&\bigoplus_{\delta\in DQ_{\leq n}}\HH^n_\delta(\Delta) &\to &\HH\HH^n(\Lambda_\Delta) &\to &\HH\HH^n(A)\oplus\bigoplus_{\substack{\gamma\in CQ_{\leq n}\\\gamma\notin Q_0}}\HH^n_\gamma(\Delta)&\to\\
&&\dots\\

\end{array}$$

\normalsize
\end{theo}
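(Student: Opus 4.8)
The plan is to extract the long exact sequence from the short exact sequence of complexes
$$0\to\DD^\bullet(\Lambda_\Delta)\to\JJ^\bullet(\Lambda_\Delta)\to(\JJ/\DD)^\bullet(\Lambda_\Delta)\to 0$$
established in Section \ref{alongexactseq}, by simply writing out its associated long exact cohomology sequence and then identifying each of the three terms in degree $n$ with the expressions appearing in the statement. First I would invoke the standard homological algebra fact that a short exact sequence of cochain complexes yields a long exact sequence in cohomology, with connecting homomorphisms $\HH^n((\JJ/\DD)^\bullet)\to\HH^{n+1}(\DD^\bullet)$. This is legitimate because, as proven in Proposition \ref{subcomplex}, $\DD^\bullet$ is genuinely a subcomplex of $\JJ^\bullet$, and the quotient is the complex $(\JJ/\DD)^\bullet$.

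Next I would substitute the identifications already available. By Lemma \ref{byseparable} applied to the separable subalgebra $D=\times_{x\in Q_0}k$, the cohomology of $\JJ^\bullet(\Lambda_\Delta)$ is $\HH\HH^\bullet(\Lambda_\Delta)$, giving the middle column. For the left column, the previous proposition in the excerpt gives
$$\HH^n(\DD^\bullet)=\bigoplus_{\delta\in DQ_{\leq n}}\HH^n_\delta(\Delta),$$
and for the right column it gives
$$\HH^n((\JJ/\DD)^\bullet)=\bigoplus_{\gamma\in CQ_{\leq n}}\HH^n_\gamma(\Delta).$$
Then I would split off the cycles of length $0$ in the last display: since $CQ_0=Q_0$ and $\HH^n_x(\Delta)=\HH\HH^n(A_x)$ for a vertex $x$ (last item of the remark preceding Definition \ref{omegacoho}), while $\HH\HH^n(A)=\times_{x\in Q_0}\HH\HH^n(A_x)=\bigoplus_{x\in Q_0}\HH\HH^n(A_x)$, we obtain
$$\HH^n((\JJ/\DD)^\bullet)=\HH\HH^n(A)\oplus\bigoplus_{\substack{\gamma\in CQ_{\leq n}\\\gamma\notin Q_0}}\HH^n_\gamma(\Delta),$$
which is exactly the right column of the displayed array.

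Finally I would verify the low-degree entries against the list of special cases computed just before the theorem: $\HH^0(\DD^\bullet)=0$ because $DQ_{\leq 0}=\emptyset$; $\HH^1(\DD^\bullet)=\bigoplus_{a\in Q_1}\End_{A_{t(a)}-A_{s(a)}}M_a=\End_{A-A}M$ by Lemma \ref{HHdelta1}; $\HH^0((\JJ/\DD)^\bullet)=\ZZ A$; and $\HH^1((\JJ/\DD)^\bullet)=\HH\HH^1(A)$ since $CQ_{\leq 1}=Q_0$ (no loops). Substituting these produces the first three rows of the array verbatim, and the general row is the degree-$n$ segment of the long exact sequence after the above substitutions. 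There is essentially no obstacle here: all the genuine work — showing $\DD^\bullet$ is a subcomplex, computing the cohomologies of $\DD^\bullet$ and $(\JJ/\DD)^\bullet$ in terms of cohomologies along paths, and the vertex computation $\HH^n_x(\Delta)=\HH\HH^n(A_x)$ — has been carried out in the preceding results; the only mild care needed is bookkeeping, namely checking that the boundary cases ($n=0,1$, and the appearance of $DQ_{\leq n}$ versus $DQ_{\leq 2}$ in the first rows) line up with the indexing conventions, and that splitting off $Q_0\subset CQ$ from the quotient cohomology matches the way $\HH\HH^\bullet(A)$ is displayed as a separate direct summand.
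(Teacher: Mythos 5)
Your proposal is correct and follows exactly the route the paper intends: the long exact sequence is the one associated to the short exact sequence of complexes $0\to\DD^\bullet\to\JJ^\bullet\to(\JJ/\DD)^\bullet\to 0$, with the three columns identified via Lemma \ref{byseparable}, the proposition computing $\HH^n(\DD^\bullet)$ and $\HH^n((\JJ/\DD)^\bullet)$, and the splitting off of $CQ_0=Q_0$ giving the $\HH\HH^n(A)$ summand. Your bookkeeping of the low-degree cases (including $CQ_{\leq 2}\setminus Q_0=CQ_2$ since $Q$ has no loops) matches the computations listed just before the theorem.
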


\begin{coro} \label{les nocycles}
If $Q$ has no cycles - that is if $CQ=Q_0$ - the cohomology long exact sequence is as follows:

\small
$$
\arraycolsep=2mm\def\arraystretch{1,4}
\begin{array}{lllllllccc}
0 &\to  &0 &\to &\HH\HH^0(\Lambda_\Delta) &\to &\HH\HH^0(A) &\to\\
&&\End_{A-A}M &\to &\HH\HH^1(\Lambda_\Delta) &\to &\HH\HH^1(A) &\to\\
&&\bigoplus_{\delta\in DQ_{\leq 2}}\HH^2_\delta(\Delta) &\to &\HH\HH^2(\Lambda_\Delta) &\to &\HH\HH^2(A)&\to\\
&&\dots\\
&&\bigoplus_{\delta\in DQ_{\leq n}}\HH^n_\delta(\Delta) &\to &\HH\HH^n(\Lambda_\Delta) &\to &\HH\HH^n(A)&\to\\
&&\dots\\

\end{array}$$

\end{coro}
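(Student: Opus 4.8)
The plan is to derive this corollary directly from Theorem \ref{les} by specialising the long exact sequence to the case where the Peirce quiver $Q$ has no oriented cycles apart from its vertices. First I would recall the relevant notation: $CQ_m$ denotes the set of cycles of length $m$, with $CQ_0=Q_0$, and $CQ=\bigcup_{m\geq 0}CQ_m$. The hypothesis $CQ=Q_0$ then says exactly that there are no cycles of positive length, i.e. $CQ_m=\emptyset$ for every $m>0$. Equivalently, for each $n$ the only elements of $CQ_{\leq n}$ are vertices, so the index set $\{\gamma\in CQ_{\leq n}:\gamma\notin Q_0\}$ is empty, and in particular $CQ_2=\emptyset$.

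The second step is purely a matter of substitution. In the long exact sequence of Theorem \ref{les} the right-hand column in degree $n\geq 2$ reads $\HH\HH^n(A)\oplus\bigoplus_\gamma\HH^n_\gamma(\Delta)$, the sum being taken over the non-vertex cycles of length at most $n$ (and over $CQ_2$ in degree $2$). By the first step each such direct sum is a sum over the empty set, hence zero, so the column collapses to $\HH\HH^n(A)$. The degree $0$ and degree $1$ rows require no change, since they already involve only $\HH\HH^0(A)$ and $\HH\HH^1(A)$, because $Q$ has no loops and so $CQ_{\leq 1}=Q_0$ in any case. Inserting these simplifications into the sequence of Theorem \ref{les} produces exactly the displayed long exact sequence, with exactness inherited verbatim from Theorem \ref{les}.

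There is essentially no obstacle here: all the substantive work has been carried out in Theorem \ref{les}, and the corollary is merely its restriction to acyclic Peirce quivers. The only point deserving a line of care is the combinatorial bookkeeping of which cycle-indexed summands $\HH^n_\gamma(\Delta)$ actually survive, and this is settled once one observes that $CQ_m=\emptyset$ for $m>0$ under the hypothesis $CQ=Q_0$.
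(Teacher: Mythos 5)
Your proposal is correct and is exactly the intended derivation: the paper states this corollary without a separate proof precisely because, under the hypothesis $CQ=Q_0$, every summand $\bigoplus_{\gamma\in CQ_{\leq n},\,\gamma\notin Q_0}\HH^n_\gamma(\Delta)$ (and $\bigoplus_{\gamma\in CQ_2}\HH^2_\gamma(\Delta)$) in Theorem \ref{les} is a sum over the empty set. Your bookkeeping of which cycle-indexed terms vanish is the only point requiring care, and you handle it correctly.
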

\normalsize

In Lemma \ref{HHdelta1} we have computed the cohomology along $\delta\in DQ_m$ in its lowest degree:
$$\HH_\delta^m=\Hom_{A_{t(\delta)}-A_{s(\delta)}}(M_\delta, \Delta_\delta).$$
In particular for an arrow $a$,
$$\HH^1_a(\Delta)=\End_{A_{t(a)}-A_{s(a)}} M_a.$$

 In the next section we will show that
 $$\HH^{r+1}_a(\Delta)=\Ext^{r}_{A_{t(a)}-A_{s(a)}} \left( M_a, M_a\right).$$
 For longer paths, we will compute the cohomology under some additional hypotheses, regardless if the path is a non cycle or a cycle.

\section{\sf Cohomology along paths}\label{HHalongpaths}

We recall our setting:  $Q$ is a simply laced quiver,  $\Delta$  a $Q$-set consisting in a set of  algebras $\{A_x\}_{x\in Q_0}$ attached to the vertices of $Q$ and a bimodule $M_a$ for each arrow $a\in Q_1$, where $M_a$ is an $A_{t(a)}-A_{s(a)}$-bimodule.

Our aim is to compute the cohomology $\HH_\omega^*(\Delta)$ along a path $\omega\in Q_m$, see Definition \ref{omegacoho}. In case $m\geq 2$, we will need extra assumptions in order to perform the computation. If $\omega$ is an arrow, no additional assumption is required.

Let $a\in Q_1$ and $n\geq 1$. A trajectory of duration $n$ over $a$ is $\tau_{q,p} = t(a)^{q}, a, s(a)^{p}$, where $p\geq 0$ and $q\geq 0$ verify $q+1+p=n$.

For simplicity, we set  $B=A_{t(a)}$, $A=A_{s(a)}$ and $M=M_a$. Moreover, we omit tensor product symbols over $k$ between vector spaces, and we replace tensors between elements by commas.
Recall that
$$(\tau_{q,p})_\Delta= B^{q} M  A^{p}.$$

Observe that $\left(\tau_{q,p}\right)_0^+$ has  two $(n+1)$-trajectories:
$$\tau_{q+1,p}=t(a)^{q+1}, a, s(a)^{p}\ \ \mbox{ and }\ \ \tau_{q,p+1}=\ t(a)^{q}, a, s(a)^{p+1}.$$

Note that $T_0(a)$ is empty, $T_1(a)$ has one trajectory $t(a)^0, a, s(a)^0$, while $T_2(a)$ has two trajectories, $t(a)^1, a, s(a)^0$ and $t(a)^0, a, s(a)^1$.

These observations lead to the following explicit description of the complex which  provides the cohomology along an arrow:

\begin{lemm}
Given an arrow $a$, the complex of cochains $(\KK^\bullet_a,\dd)$ which computes $\HH_a^*(\Delta)$ is:

$$\begin{array}{lll}
0\longrightarrow&\Hom_k(M,M)\stackrel{\dd_1}{\longrightarrow}\Hom_k(BM,M)\oplus \Hom_k(MA,M)\stackrel{\dd_2}{\longrightarrow}\\
&\Hom_k(B^2M,M)\oplus \Hom_k(BMA,M)\oplus \Hom_k(MA^2,M)\stackrel{\dd_3}{\longrightarrow}\cdots\stackrel{\dd_{n-1}}{\longrightarrow}\\ &\oplus_{p+q+1=n}\Hom_k(B^qMA^p,M)\stackrel{\dd_n}{\longrightarrow}\oplus_{p+q+1=n+1}\Hom_k(B^qMA^p,M)
\stackrel{\dd_{n+1} }{\longrightarrow}\\&\cdots
\end{array}
$$
where for $q+1+p=n$ and $f\in\Hom_k(B^qMA^p,M)$
 $$ \begin{array}{lll}
 (\dd_nf)_{\tau_{q+1,p}}(b_1,\dots, b_{q+1},m,a_1,\dots, a_p)=\\
  &\hskip-3cmb_1f(b_2,\dots,b_{q+1},m,a_1,\dots, a_p)+\\
  &\hskip-3cm\sum_{1}^q(-1)^if(b_1,\dots,b_ib_{i+1},\dots,b_{q+1},m,a_1,\dots, a_p)+\\
  &\hskip-3cm(-1)^{q+1}f(b_1,\dots,b_{q+1}m,a_1,\dots,a_p)
  \end{array}$$
and
 $$ \begin{array}{lll}
 (\dd_nf)_{\tau_{q,p+1}}(b_1,\dots, b_{q},m,a_1,\dots, a_{p+1})=\\
&\hskip-3cm (-1)^q[f(b_1,\dots,b_{q},ma_1,\dots, a_{p+1})+\\
&\hskip-3cm\sum_{1}^{p}(-1)^{i}f(b_1,\dots,b_{q},m,a_1,\dots,a_ia_{i+1},\dots, a_{p+1})+\\
&\hskip-3cm(-1)^{p+1}f(b_1,\dots,b_{q},m,a_1,\dots,a_p)a_{p+1}].
  \end{array}$$
In particular for $f\in \Hom_k(M,M)$:
$$(\dd_1f)_{\tau_{1,0}}(b,m)=bf(m)-f(bm)\ \mbox{ and } \ (\dd_1f)_{\tau_{0,1}}(m,a)=f(ma)-f(m)a.$$

\end{lemm}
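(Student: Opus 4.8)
The statement is essentially a transcription of the abstract coboundary formula of Lemma~\ref{byseparable} into the notation special to the arrow case, so the proof is a matter of unwinding the definitions rather than proving anything substantive. First I would record the setting: for the arrow $a$ the only paths of $Q$ appearing in the relevant trajectories are the vertices $s(a)$, $t(a)$, and the arrow $a$ itself, and by Definition~\ref{omegacoho} the complex $(\KK^\bullet_a,\dd)$ in degree $n$ is $\bigoplus_{\tau\in T_n(a)}\JJ_\tau$, where $T_n(a)=\{\tau_{q,p}\mid q+1+p=n,\ q,p\geq 0\}$ and $\JJ_{\tau_{q,p}}=\Hom_k(B^qMA^p,M)$ because $\Delta_a=M_a=M$ by Definition~\ref{counterpart}. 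Listing $T_1(a)$, $T_2(a)$, $T_3(a)$ then gives the first few terms displayed in the statement verbatim. The differential $\dd$ is, by definition, the composite of the ordinary coboundary $d$ of Lemma~\ref{byseparable} followed by the projection onto $\bigoplus_{\sigma\in\tau^+_0}\JJ_\sigma$; by Proposition~\ref{dftau} (which applies since an arrow is a non-cycle of length one) this projection loses nothing, so $\dd f_\tau=df_\tau=\sum_{\sigma\in\tau^+_0}(df_\tau)_\sigma$.

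Next I would identify $\tau^+_0$ concretely. For $\tau_{q,p}$ a trajectory of duration $n$ over $a$, increasing a waiting time by one produces exactly the two trajectories $\tau_{q+1,p}$ and $\tau_{q,p+1}$ of duration $n+1$, as already noted in the excerpt. Hence $\dd_n f$ has exactly two components, indexed by $\tau_{q+1,p}$ and $\tau_{q,p+1}$, and I must read off each from the general formula
$$(d f)(x_1\otimes\cdots\otimes x_{n+1})=x_1f(x_2\otimes\cdots\otimes x_{n+1})+\sum_{i=1}^{n}(-1)^i f(x_1\otimes\cdots\otimes x_ix_{i+1}\otimes\cdots\otimes x_{n+1})+(-1)^{n+1}f(x_1\otimes\cdots\otimes x_n)x_{n+1}$$
restricted to the subspace $(\tau_{q+1,p})_\Delta=B^{q+1}MA^p$, respectively $(\tau_{q,p+1})_\Delta=B^qMA^{p+1}$. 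On $B^{q+1}MA^p$ write the elementary tensor as $(b_1,\dots,b_{q+1},m,a_1,\dots,a_p)$; then the term $x_1f(\cdots)$ survives (it is $b_1f(b_2,\dots,b_{q+1},m,a_1,\dots,a_p)$, landing in $M$ via the left $B$-action), the inner-multiplication terms $b_ib_{i+1}$ for $1\le i\le q$ survive with sign $(-1)^i$, the term $b_{q+1}m$ survives with sign $(-1)^{q+1}$, and every remaining inner product ($ma_1$, $a_ia_{i+1}$, or the final $f(\cdots)a_p$) would require the $x$'s to lie in a trajectory over $a$ with a different waiting-time profile — in fact those contributions are precisely the $\tau_{q+1,p'}$-components with $p'\ne p$ and hence are $0$ when we evaluate on $B^{q+1}MA^p$. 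This yields the first displayed formula for $(\dd_nf)_{\tau_{q+1,p}}$. Symmetrically, restricting to $B^qMA^{p+1}$ and writing $(b_1,\dots,b_q,m,a_1,\dots,a_{p+1})$, the surviving terms are $f(b_1,\dots,b_q,ma_1,\dots,a_{p+1})$, the products $a_ia_{i+1}$, and $f(\cdots)a_{p+1}$, all carrying an overall sign $(-1)^q$ coming from the position $q+1$ of $m$ in the bar resolution together with the alternating signs; the left-hand terms $b_1f(\cdots)$ and $b_ib_{i+1}$ now vanish on this subspace. This gives the second displayed formula.

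Finally I would specialize to $n=1$, where $q=p=0$, $f\in\Hom_k(M,M)$, $\tau^+_0(\tau_{0,0})=\{\tau_{1,0},\tau_{0,1}\}$: the two formulas collapse to $(\dd_1f)_{\tau_{1,0}}(b,m)=bf(m)-f(bm)$ (here the sum over $i$ is empty and $(-1)^{q+1}=-1$) and $(\dd_1f)_{\tau_{0,1}}(m,a)=f(ma)-f(m)a$ (the sum empty, overall sign $(-1)^q=1$, final sign $(-1)^{p+1}=-1$), exactly as claimed. The only point requiring any care — and the nearest thing to an obstacle — is bookkeeping the signs and, in particular, justifying the global factor $(-1)^q$ in the $\tau_{q,p+1}$-component and confirming that each term of the universal $d$ either lands in $(\tau_{q+1,p})_\Delta$, lands in $(\tau_{q,p+1})_\Delta$, or vanishes identically on the relevant subspace (equivalently, that no cross-terms between the two target trajectories occur); this is immediate once one observes that the position of the unique $M$-factor $m$ in the tensor, namely slot $q+1$, is preserved by every surviving operation and shifts only when one multiplies $m$ into an adjacent $B$ (moving into the $\tau_{q+1,p}$-part) or an adjacent $A$ (moving into the $\tau_{q,p+1}$-part). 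Everything else is a direct substitution into the formula of Lemma~\ref{byseparable}, using $\Delta_a=M$.
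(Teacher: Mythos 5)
Your approach is the paper's own: the lemma carries no separate proof there, being presented as a direct unwinding of Definition \ref{omegacoho} using the observations, recorded just before the statement, that $T_n(a)=\{\tau_{q,p}\mid q+1+p=n\}$ and $(\tau_{q,p})_0^+=\{\tau_{q+1,p},\tau_{q,p+1}\}$; your identification of which terms of the universal coboundary of Lemma \ref{byseparable} survive on each of the two target trajectories is exactly the intended argument, and the $\tau_{q+1,p}$-component comes out verbatim.

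The one point you explicitly defer -- ``justifying the global factor $(-1)^q$'' in the $\tau_{q,p+1}$-component -- is, however, precisely where a literal computation and the displayed formula part ways, and your sketched explanation (``the position $q+1$ of $m$ \dots together with the alternating signs'') gives the opposite sign if carried out. In the formula of Lemma \ref{byseparable} the product $ma_1$ occurs at index $i=q+1$ and so carries $(-1)^{q+1}$, the products $a_ia_{i+1}$ carry $(-1)^{q+1+i}$, and the final term carries $(-1)^{n+1}=(-1)^{q+p+2}$; factoring out yields $(-1)^{q+1}[\cdots]$, not $(-1)^{q}[\cdots]$. Already in the base case one finds $(df)_{\tau_{0,1}}(m\otimes a)=mf(a)-f(ma)+f(m)a=-f(ma)+f(m)a$ (the first term vanishes since $f$ is supported on $M$), the negative of what the lemma asserts. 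The discrepancy is harmless for cohomology: rescaling each summand $\Hom_k(B^qMA^p,M)$ by $(-1)^p$ is an isomorphism of complexes taking the honest restriction of $d$ to the differential displayed in the lemma, and this is the sign convention needed for the identification with $\Hom_{B-A}(\CC^\bullet(M),M)$ in Theorem \ref{cohoalongarrow}, where the Koszul signs of the tensor product of resolutions intervene. But a complete proof must either perform this rescaling explicitly or flag the sign in the statement; as written, your argument would ``confirm'' a sign that direct substitution does not produce.
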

We recall the bar resolution over an arbitrary algebra $R$ of a  left $R$-module $X$, by free $R$-modules.
$$\cdots\stackrel{\beta}{\to} R^nX \stackrel{\beta}{\to}R^{n-1}X\stackrel{\beta}{\to}\cdots \stackrel{\beta}{\to}R^2X\stackrel{\beta}{\to}RX\stackrel{\beta}{\to}X\to 0$$
where
$$\begin{array}{llll}
\beta(r_1,\dots, r_n,x)=& \sum_1^{n-1}(-1)^{i+1} (r_1,\dots,r_ir_{i+1},\dots,r_n,x) +\\
& (-1)^{n+1}(r_1\dots,r_{n-1}, r_nx).
\end{array}$$
Note that if $X=R$, then the bar resolution is also a resolution of $R$ as $R$-bimodule.

The next two results have been obtained in \cite{CIBILS2000}, see also \cite{CMRS2002}. We recall their proof for further use. The first one provides a canonical resolution of $M$ as $B-A$-bimodule, which is not its bar resolution over $B\otimes A^{\mathsf{op}}$. It is suitable in order to obtain the complex of cochains $\KK^\bullet_a$ described above, which in turn will enable us to obtain the second result.

\begin{lemm}\label{resolution}
Let $A$ and $B$ be algebras, and let $M$ be a $B-A$-bimodule.
The following complex $\CC^\bullet(M)$ is a free-resolution of $M$ as bimodule,

$$\begin{array}{ll}\displaystyle
\dots\to\bigoplus_{\substack{p+q=n+2\\p,q>0}}B^qMA^p\stackrel{\beta_n}{\to}\bigoplus_{\substack{p+q=n+1\\p,q>0}}B^qMA^p \to&\\
&\hskip-3cm\dots\to BMA^2\oplus B^2MA\stackrel{\beta_1}{\to}BMA\stackrel{\beta_0}{\to}M\to 0
\end{array}$$
where $\beta_0(b,m,a)=bma$. For $n>0$, the differential $\beta_n$ is the sum of
$$\beta_n^{q,p} : B^qMA^p\longrightarrow B^{q-1}MA^p\ \oplus \ B^qMA^{p-1}$$
where $p+q=n+2$ and
\begin{itemize}
\item
if $q\geq2$ and $p\geq 2$,  then $\beta_n^{q,p} $ can be written as the sum of two components ${}_{(1,0)}\beta_n^{q,p}$ and ${}_{(0,1)}\beta_n^{q,p}$  given by

$\begin{array}{lll}
_{(1,0)}\beta_n^{q,p}(b_1,\dots,b_q,m,a_1,\dots,a_p)&=\\
&\hskip-2cm\sum_1^{q-1}(-1)^{i+1}(b_1,\dots,b_ib_{i+1},\dots,b_q,m,a_1,\dots,a_p)+\\&\hskip-2cm(-1)^{q+1}(b_1,\dots,b_{q-1},b_qm,a_1,\dots,a_p)
\end{array}
$

and analogously  for ${}_{(0,1)}\beta_n^{q,p}$,

\item

if $p=1$ and $q\geq 2$, then

$\begin{array}{lll}
\beta_n^{q,1}(b_1,\dots,b_q,m,a_1)=&\\
&\sum_1^{q-1}(-1)^{i+1}(b_1,\dots,b_ib_{i+1},\dots,b_q,m,a_1)+\\&(-1)^{q+1}(b_1,\dots,b_{q-1},b_qm,a_1),
\end{array}
$
\item
and if $p\geq 2$ and $q=1$, the definition of $\beta_n^{1,p}$ is analogous to that of $\beta_n^{q,1}$  in the previous item.

\end{itemize}

\end{lemm}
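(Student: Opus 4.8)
The plan is to show that the complex $\CC^\bullet(M)$ is acyclic in positive degrees and has $M$ as its $0$-th homology, by exhibiting it as the total complex of a double complex built from bar resolutions on the two sides, or — more directly — by producing an explicit contracting homotopy. First I would reorganize the statement: set $R = B\otimes A^{\mathsf{op}}$ so that $M$ is a left $R$-module, and recall that $B^qMA^p = B^{\otimes q}\otimes M\otimes A^{\otimes p}$. The term $\bigoplus_{p+q=n+1,\,p,q>0} B^qMA^p$ in homological degree $n-1$ should be recognized as the degree-$n$ part of the tensor product of the (reduced) bar complex of $B$ with the bar complex of $A$, sandwiching $M$; the differential $\beta_n$, being the alternating sum of the $B$-side multiplications (the ${}_{(1,0)}\beta$ component) and the $A$-side multiplications (the ${}_{(0,1)}\beta$ component, with the sign twist $(-1)^{q}$ coming from moving past the $B$-factors), is exactly the total differential of that tensor product of complexes, truncated so that at least one $B$ and one $A$ survive.

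Concretely I would compare $\CC^\bullet(M)$ with the honest bar resolution $\mathsf{Bar}_\bullet$ of $M$ over $R$, whose degree-$n$ term is $\bigoplus_{p+q=n} B^qMA^p$ (allowing $p=0$ or $q=0$). The subcomplex spanned by those summands with $q=0$ is the bar resolution of $M$ as a right $A$-module tensored up, which is a resolution, hence acyclic augmented to $M$; similarly for $p=0$. The complex $\CC^\bullet(M)$ is, up to a degree shift by one, the quotient of $\mathsf{Bar}_\bullet$ by the sum of these two acyclic subcomplexes, with the overlap ($p=q=0$, i.e. $M$ itself) added back as the augmentation. A short exact sequence of complexes plus the long exact homology sequence — or, cleaner, a spectral sequence of the double complex with $B$-bar in one direction and $A$-bar in the other — then collapses to give $H_0(\CC^\bullet(M)) = M$ and $H_i = 0$ for $i>0$. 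Freeness of each term is immediate: $B^qMA^p = B\otimes(B^{\otimes q-1}\otimes M\otimes A^{\otimes p-1})\otimes A$ is a free $B\otimes A^{\mathsf{op}}$-bimodule on the vector space in the middle.

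Alternatively, and perhaps with less bookkeeping, I would simply write down the contracting homotopy $s$ on the left side: $s(b_1,\dots,b_q,m,a_1,\dots,a_p) = (1_B, b_1,\dots,b_q,m,a_1,\dots,a_p)$ whenever $q\ge 1$, and use the right-side insertion $(b_1,\dots,b_q,m,a_1,\dots,a_p)\mapsto(b_1,\dots,b_q,m,a_1,\dots,a_p,1_A)$ on the summands with $q=0$ to patch up the boundary cases; checking $\beta s + s\beta = \mathrm{id}$ is the routine computation that I would not grind through here, but it is the standard bar-homotopy argument with the extra care that one of $p,q$ is forced to stay positive. The augmentation $\beta_0(b,m,a)=bma$ is visibly surjective with the homotopy showing exactness at the $M$ and $BMA$ spots.

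\textbf{Main obstacle.} The one genuinely delicate point is the boundary behavior at $p=1$ or $q=1$: there the differential $\beta_n^{q,1}$ (resp. $\beta_n^{1,p}$) has only one component rather than two, because a further application of a $B$-side (resp. $A$-side) multiplication would land outside the truncated complex, hitting a summand with $q=0$ (resp. $p=0$) that has been collapsed into $M$. So the contracting homotopy cannot be the naive symmetric one; one must choose, say, the left insertion $1_B$ as the primary homotopy and verify that on the locus $q=1$ the ``missing'' term of $\beta s + s\beta$ is exactly supplied by the $\beta_0$-augmentation term $b_q m \mapsto$ (its image in $M$), so that the identity still holds after passing to the truncated/augmented complex. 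Getting these edge signs $(-1)^{q+1}$ and the interface with $\beta_0$ to line up is the part that deserves attention; everything else is the textbook acyclicity of the bar complex.
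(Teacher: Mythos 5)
Your main line of attack is exactly the paper's proof: the complex $\CC^\bullet(M)$ is the tensor product over $A$ of the bar resolution $\cdots\to B^2M\to BM\to 0$ of $M$ as a left $B$-module with the two-sided bar resolution $\cdots\to A^{\otimes 3}\to A^{\otimes 2}\to 0$ of $A$, and the K\"unneth formula (equivalently, your collapsing double-complex spectral sequence) yields homology $M\otimes_AA=M$ in degree zero and nothing in positive degrees. The one input you leave implicit, and which the paper makes explicit, is the hypothesis needed for K\"unneth: the cycles in each degree of the $A$-side factor are projective left $A$-modules, because that resolution splits $A$-linearly. With that remark added, your first route is correct and coincides with the paper's argument; freeness of the terms is, as you say, immediate.

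Your two fallback formulations, however, contain genuine errors and should be discarded rather than patched. First, the bar resolution of $M$ over $R=B\otimes A^{\mathsf{op}}$ has $n$-th term $R^{\otimes n}\otimes M\cong B^nMA^n$, not $\bigoplus_{p+q=n}B^qMA^p$, so $\CC^\bullet(M)$ is not a quotient of it by two subcomplexes in the way you describe; the object you have in mind is the total complex of the two-sided simplicial bar construction, which is just the double complex of your first route again. Second, the left-insertion homotopy $s=(1_B,-)$ satisfies $\beta s+s\beta=\mathrm{id}$ only on the summands with $q\ge 2$: on $BMA^p$ with $p\ge 2$ the computation leaves the defect term $(1_B,\,b_1m,\,a_1,\dots,a_p)$, which lives in $BMA^p$ itself and has nothing to do with the augmentation, so it is not ``supplied by the $\beta_0$-term'' as you hope (and a similar defect appears at the $BMA$ spot even after adjoining $s_{-1}(m)=(1_B,m,1_A)$). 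Repairing this would require a genuinely different homotopy on the $q=1$ locus; since the K\"unneth argument already settles the lemma, there is no need to pursue it.
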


\begin{proof}
Consider the bar resolution of $M$ as a left $B$-module, and the bar resolution of $A$ as $A$-bimodule:
$$\cdots \to\ BBM \to BM \to 0 \ \mbox{ \ and\  }\  \cdots \to\ AAA \to AA \to 0.$$

Their tensor product over $A$ provides the complex described in the statement. In order to use  the K\"{u}nneth formula (see for instance \cite{WEIBEL}), we first observe that the cycles in each degree of the bar resolution of $A$ are projective left $A$-modules, since the resolution splits as a sequence of left $A$-modules. Hence the tensor product of the bar resolutions has zero homology in positive degrees, while in degree zero its homology is $M\otimes_A A = M$. \qed

\end{proof}

\begin{theo}\label{cohoalongarrow}
Let $Q$ be a simply laced quiver with a $Q$-set $\Delta$, and let $a\in Q_1$. The cohomology of $\Delta$ along $a$ is as follows:

$$\HH_a^{1+r}(\Delta)=\Ext^r_{B-A}(M,M).$$
\end{theo}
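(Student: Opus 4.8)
The strategy is to identify the complex $(\KK_a^\bullet, \dd)$ that computes $\HH_a^\bullet(\Delta)$ with the complex obtained by applying $\Hom_{B-A}(-, M)$ to the free resolution $\CC^\bullet(M)$ of $M$ constructed in Lemma~\ref{resolution}. Once this identification is made, the cohomology of that $\Hom$-complex is by definition $\Ext^\bullet_{B-A}(M,M)$, and the degree shift by one falls out of the way the resolution is indexed. So the whole argument is a bookkeeping comparison of two explicitly described complexes, and no new hypotheses on Tor are needed (this is the arrow case, where the resolution is available unconditionally).

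\textbf{Step 1: write down both complexes in the same terms.} The resolution $\CC^\bullet(M)$ has $n$-th term $\bigoplus_{p+q=n+2,\ p,q>0} B^qMA^p$ for $n\geq 1$, and $BMA$ in degree $0$, augmenting onto $M$ via $\beta_0(b,m,a)=bma$. Applying $\Hom_{B-A}(-,M)$ and using the standard adjunction $\Hom_{B-A}(B^qMA^p, M)\cong \Hom_k(B^{q-1}MA^{p-1}, M)$ (the outer $B$ and $A$ factors are absorbed by bimodule-linearity, exactly as in the usual reduction of the bar complex), the resulting cochain complex has in degree $n$ the space $\bigoplus_{p'+q'=n,\ p',q'\geq 0}\Hom_k(B^{q'}MA^{p'}, M)$ — reindexing $p'=p-1$, $q'=q-1$. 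Comparing with the Lemma preceding Lemma~\ref{resolution}: the complex $\KK_a^\bullet$ has in degree $n$ precisely $\bigoplus_{p+q+1=n}\Hom_k(B^qMA^p,M)$, i.e.\ $\bigoplus_{q+p = n-1}$. There is thus an off-by-one in the indexing, which is exactly the source of the shift: $\HH_a^{1+r}$ pairs with $\Ext^r$. Concretely, $\KK_a^{1+r}$ matches $\Hom_{B-A}(\CC^r(M), M)$.

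\textbf{Step 2: check the differentials agree.} One must verify that the coboundary $\dd$ of $\KK_a^\bullet$, written out explicitly in the Lemma as $(\dd_n f)_{\tau_{q+1,p}}$ and $(\dd_n f)_{\tau_{q,p+1}}$, corresponds under the above identification to the transpose of $\beta_n = {}_{(1,0)}\beta + {}_{(0,1)}\beta$. Term by term this is immediate: the ``increase the $B$-waiting-time'' piece $(\dd_n f)_{\tau_{q+1,p}}$ — with its leading $b_1 f(\dots)$, its alternating sum of multiplications $b_i b_{i+1}$, and its final $(-1)^{q+1} f(\dots, b_{q+1}m, \dots)$ — is exactly $f\circ {}_{(1,0)}\beta$ composed with the augmentation terms coming from absorbing the outer factors; and $(\dd_n f)_{\tau_{q,p+1}}$, carrying the global sign $(-1)^q$ and the alternating sum on the $A$-side ending in $f(\dots)a_{p+1}$, is $f\circ {}_{(0,1)}\beta$. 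The sign $(-1)^q$ on the $A$-block is precisely the Koszul sign coming from tensoring the two bar resolutions over $A$ in Lemma~\ref{resolution}, so the two sign conventions match. The base cases ($\dd_1$ on $\Hom_k(M,M)$ giving $bf(m)-f(bm)$ and $f(ma)-f(m)a$) visibly correspond to $\Hom_{B-A}(\beta_0,M)$ read through the adjunction.

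\textbf{Main obstacle.} The only real work is the sign/indexing check in Step~2 — making sure the Koszul signs from the Künneth-type construction of $\CC^\bullet(M)$ (tensor product of the two bar resolutions over $A$) line up with the signs prescribed for $\dd$ in Definition~\ref{omegacoho}/the displayed formulas, rather than differing by an overall twist. This is routine but genuinely must be done, since a mismatched sign convention would at worst force a reindexing isomorphism; it does not affect the statement. Everything else — exactness of $\CC^\bullet(M)$, the adjunction collapsing $B^qMA^p$-homs to $\Hom_k$-homs — is already supplied by Lemma~\ref{resolution} and its proof. Hence $\HH_a^{1+r}(\Delta)=H^r\big(\Hom_{B-A}(\CC^\bullet(M),M)\big)=\Ext^r_{B-A}(M,M)$, as claimed.
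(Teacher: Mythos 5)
Your argument is correct and is essentially the paper's own proof: the paper likewise obtains $\KK_a^\bullet$ by applying $\Hom_{B-A}(-,M)$ to the resolution $\CC^\bullet(M)$ of Lemma~\ref{resolution} and invoking the canonical isomorphism $\Hom_{B-A}(BYA,X)\cong\Hom_k(Y,X)$, with your Step~2 merely spelling out the differential and sign comparison that the paper leaves implicit. (Only a cosmetic slip in your base-case remark: $\dd_1$ is dual to $\beta_1$, not to the augmentation $\beta_0$.)
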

\begin{proof}
First we apply the functor $\Hom_{B-A}(-,M)$ to the previous resolution. Let $Y$ and $X$ be $B-A$-bimodules. The canonical isomorphism $$\Hom_{B-A}(BY\!A,X)=\Hom_k(Y,X)$$ provides the complex of cochains $\KK^\bullet_a$.
\qed
\end{proof}

As a consequence, in case there are no paths of length greater than or equal to $2$, the long exact sequence of Corollary \ref{les nocycles} is simpler, as we prove in Corollary \ref{Q2empty}. Note that for this sort of quivers a $k$-algebra arising from a $k$-category is automatically with zero compositions.

\begin{coro}\label{Q2empty}
Let $Q$ be a simply laced quiver with $Q_2$ empty. Let $$\Delta = (A=\times_{x\in Q_0}A_x,\ M=\oplus_{a\in Q_1} M_a)$$ be a $Q$-set and let $\Lambda_\Delta$ be the corresponding algebra arising from a $k$-category. The cohomology long exact sequence is:
\small
$$
\arraycolsep=2mm\def\arraystretch{2}
\begin{array}{lllllllccc}
0 &\longrightarrow  &0 &\longrightarrow &\HH\HH^0(\Lambda_\Delta) &\longrightarrow &\HH\HH^0(A) &\longrightarrow\\
&&\End_{A-A}M &\longrightarrow &\HH\HH^1(\Lambda_\Delta) &\longrightarrow &\HH\HH^1(A) &\longrightarrow\\
&&\Ext^1_{A-A}(M,M)&\longrightarrow &\HH\HH^2(\Lambda_\Delta) &\longrightarrow &\HH\HH^2(A)&\longrightarrow\\
&&\dots\\
&&\Ext^{n-1}_{A-A}(M,M)&\longrightarrow &\HH\HH^n(\Lambda_\Delta) &\longrightarrow &\HH\HH^n(A)&\longrightarrow\\
&&\dots\\

\end{array}$$

\end{coro}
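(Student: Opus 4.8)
The plan is to specialize the long exact sequence of Theorem \ref{les} to a quiver with $Q_2=\emptyset$ and then rewrite the ``cohomology along an arrow'' terms via Theorem \ref{cohoalongarrow}. First I would record the combinatorial consequences of the hypothesis. Since $Q$ is simply laced it has no loops, so the only cycles are the vertices, i.e.\ $CQ=Q_0$; and since there are no paths of length $\geq 2$, every arrow is a non cycle and there are no longer non cycles, so $DQ=Q_1$ and $DQ_{\leq n}=Q_1$ for all $n\geq 1$ (while $DQ_{\leq 0}=\emptyset$). In particular $CQ_{\leq n}\setminus Q_0=\emptyset$, hence in the generic term of the sequence of Theorem \ref{les} the right-hand summand $\bigoplus_{\gamma\in CQ_{\leq n},\,\gamma\notin Q_0}\HH^n_\gamma(\Delta)$ disappears and only $\HH\HH^n(A)$ survives. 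I would also remark, as noted just before the statement, that for a quiver with $Q_2$ empty a $k$-algebra arising from a $k$-category automatically has zero compositions, there being no composable triples of morphisms between three distinct objects, so Theorem \ref{les} does apply.

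Next I would treat the left-hand terms. For $n\geq 1$ we have $\bigoplus_{\delta\in DQ_{\leq n}}\HH^n_\delta(\Delta)=\bigoplus_{a\in Q_1}\HH^n_a(\Delta)$, and Theorem \ref{cohoalongarrow}, after the shift by one, gives $\HH^n_a(\Delta)=\Ext^{n-1}_{A_{t(a)}-A_{s(a)}}(M_a,M_a)$; for $n=1$ this is $\End_{A_{t(a)}-A_{s(a)}}M_a$, which recovers the $\End_{A-A}M$ term already present in Theorem \ref{les}, and for $n=0$ there is nothing since $DQ_{\leq 0}=\emptyset$. It then remains to identify $\bigoplus_{a\in Q_1}\Ext^{n-1}_{A_{t(a)}-A_{s(a)}}(M_a,M_a)$ with $\Ext^{n-1}_{A-A}(M,M)$, where $A=\times_{x\in Q_0}A_x$ and $M=\oplus_{a\in Q_1}M_a$. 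Writing the enveloping algebra as the product $A\otimes A^{\mathsf{op}}=\prod_{x,y\in Q_0}A_x\otimes A_y^{\mathsf{op}}$ via the central orthogonal idempotents $e_y\otimes e_x$, each $M_a=e_{t(a)}Me_{s(a)}$ is a module over the single factor $A_{t(a)}\otimes A_{s(a)}^{\mathsf{op}}$. Hence $\Ext^{\bullet}_{A-A}(M_a,M_b)=0$ whenever $(t(a),s(a))\neq(t(b),s(b))$, that is whenever $a\neq b$ since $Q$ is simply laced, while $\Ext^{\bullet}_{A-A}(M_a,M_a)=\Ext^{\bullet}_{A_{t(a)}-A_{s(a)}}(M_a,M_a)$. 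Additivity of $\Ext$ in both variables then yields $\Ext^{n-1}_{A-A}(M,M)=\bigoplus_{a\in Q_1}\Ext^{n-1}_{A_{t(a)}-A_{s(a)}}(M_a,M_a)$, which is exactly the left-hand term.

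Finally I would substitute these identifications into the already-exact sequence of Theorem \ref{les}; exactness and the connecting maps are inherited, so the displayed sequence follows. There is no genuine obstacle here: the only points requiring a little care are the degree bookkeeping (the shift by one coming from Theorem \ref{cohoalongarrow}, and the edge cases $n=0,1$) and the vanishing of the off-diagonal $\Ext$ groups, both of which are immediate from the block decomposition of $A\otimes A^{\mathsf{op}}$.
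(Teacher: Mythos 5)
Your proposal is correct and follows essentially the same route as the paper: specialize the long exact sequence (the paper uses Corollary \ref{les nocycles}, which is the same reduction you perform on Theorem \ref{les}), observe $DQ_{\leq n}=Q_1$ for $n\geq 1$, apply Theorem \ref{cohoalongarrow} with the degree shift, and identify $\bigoplus_{a\in Q_1}\Ext^{n-1}_{A_{t(a)}-A_{s(a)}}(M_a,M_a)$ with $\Ext^{n-1}_{A-A}(M,M)$. The only difference is that you spell out the block decomposition of $A\otimes A^{\mathsf{op}}$ justifying this last identification, which the paper leaves as an observation.
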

\begin{proof}
There are no cycles of positive length in $Q$, so we consider the long exact sequence of Corollary \ref{les nocycles}. For $n\geq 1$, we have $DQ_{\leq n}= Q_1$. Moreover we have just proven that the cohomology along an arrow $a$ is $\HH_a^{1+r}(\Delta)=\Ext^r_{A_{t(a)}-A_{s(a)}}(M_a, M_a)$. Finally notice that $\bigoplus_{a\in Q_1}\Ext^r_{A_{t(a)}-A_{s(a)}}(M_a, M_a) = \Ext^r_{A-A}(M,M).$
\qed
\end{proof}
\begin{exam}\label{onepoint}
Let $A_y[M]$ be a one point extension, where $A_y$ is a $k$-algebra and $M$ is a left $A_y$-module. By definition $A_y[M]=
\left(
                                                                                                       \begin{array}{cc}
                                                                                                         k & 0 \\
                                                                                                         M & A_y \\
                                                                                                       \end{array}
                                                                                              \right)=\left(
                                                                                                       \begin{array}{cc}
                                                                                                         A_y & M \\
                                                                                                         0 & k \\
                                                                                                       \end{array}
                                                                                                     \right)$.
A one point extension is a $k$-algebra arising from a $k$-category for the quiver $x\cdot \stackrel{a}{\longrightarrow} \cdot y $ and the $Q$-set $(A,M)$ where $A_x=k$ and $M_a=M$. Since there are no paths of length two, the long exact sequence of Corollary \ref{Q2empty} considered for $A_y[M]$ is in force. It coincides with the long exact sequence obtained by D. Happel in \cite{HAPPEL}. Indeed, for a field $k$ we have $\HH\HH^0(k)=k$, while $\HH\HH^n(k)=0$ if $n>0$. The long exact sequence is then as follows:
\small
$$
\arraycolsep=2mm\def\arraystretch{2}
\begin{array}{llclclcllccc}
0 &\longrightarrow  &0 &\longrightarrow &\HH\HH^0(A_y[M]) &\longrightarrow &\HH\HH^0(A_y)\oplus k &\longrightarrow\\
&&\End_{A_y}M &\longrightarrow &\HH\HH^1(A_y[M]) &\longrightarrow &\HH\HH^1(A_y) &\longrightarrow\\
&&\Ext^1_{A_y}(M,M)&\longrightarrow &\HH\HH^2(A_y[M]) &\longrightarrow &\HH\HH^2(A_y)&\longrightarrow\\
&&\dots\\
&&\Ext^{n-1}_{A_y}(M,M)&\longrightarrow &\HH\HH^n(A_y[M]) &\longrightarrow &\HH\HH^n(A_y)&\longrightarrow\\
&&\dots\\

\end{array}$$

\end{exam}
\begin{exam}
Let again $Q$ be the quiver $x\cdot \stackrel{a}{\longrightarrow} \cdot y $. Let $\Delta$ be a $Q$-set. The corresponding algebra arising from a $k$-category is the corner algebra $$\left( \begin{array}{cc}                      A_x & 0 \\
                                                                                                            M_a & A_y \\
                                                                                                          \end{array}
                                                                                                        \right) = \left( \begin{array}{cc}                      A_y & M_a \\
                                                                                                            0 & A_x \\
                                                                                                          \end{array}
                                                                                                        \right).$$ The  long exact sequence of cohomology has been obtained in this case independently in \cite{CIBILS2000}, \cite{MICHELENAPLATZEC} and \cite{GREENSOLBERG}.

\end{exam}

Next we consider a simply laced quiver $Q$ and a path $a_2a_1\in Q_2$. We set $C=A_{t(a_2)}$, $B=A_{s(a_2)}=A_{t(a_1)}$ and $A=A_{s(a_1)}$.

\normalsize

\begin{lemm}
The complex of cochains $(\KK^\bullet_{a_2a_1},\dd)$ which computes $\HH_{a_2a_1}^*(\Delta)$ is as follows:
$$\begin{array}{lll}
0\longrightarrow\Hom_k(M_{a_2}M_{a_1},X)\stackrel{\dd_1}{\longrightarrow}\\\Hom_k(CM_{a_2}M_{a_1},X)\oplus \Hom_k(M_{a_2}BM_{a_1},X)\oplus \Hom_k(M_{a_2}M_{a_1}A,X)\stackrel{\dd_2}{\longrightarrow}\\
\cdots\\
\stackrel{\dd_{n-2}} {\longrightarrow}\bigoplus_{p+q+r+2=n}\Hom_k(C^rM_{a_2}B^qM_{a_1}A^p,X)\stackrel{\dd_{n-1}}{\longrightarrow}\\
\hskip4cm\bigoplus_{p+q+r+2=n+1}
\Hom_k(C^rM_{a_2}B^qM_{a_1}A^p,X)
\stackrel{\dd_{n} }{\longrightarrow}\\\cdots
\end{array}
$$
\end{lemm}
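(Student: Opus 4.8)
The plan is to mimic the construction in Lemma \ref{resolution} and Theorem \ref{cohoalongarrow}, but now iterating the "bar-resolve, then tensor" idea twice instead of once. First I would build a canonical free resolution of the $C-A$-bimodule $M_{a_2}\otimes_B M_{a_1}$ by taking the tensor product over $B$ of the bar resolution of $M_{a_2}$ as a left $B$-module with the bar resolution of $M_{a_1}$ as a right $B$-module; in between these two I insert the bar resolution of $B$ as a $B$-bimodule, exactly as in the proof of Lemma \ref{resolution}. Concretely, the degree-$n$ term of the resulting complex is the direct sum of all $C^r M_{a_2} B^q M_{a_1} A^p$ with $r+q+p = n$ (and appropriate positivity conditions coming from which of the three bar resolutions contributes), and the differential is the alternating sum of the three partial bar differentials — contracting inside the $C$-block (including the action $C^r\,M_{a_2}\to C^{r-1}M_{a_2}$ at the junction), inside the central $B$-block (with the two junction actions $M_{a_2}B$ and $BM_{a_1}$), and inside the $A$-block (with $M_{a_1}A^p\to M_{a_1}A^{p-1}$). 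This is the natural two-step analogue of the complex $\CC^\bullet(M)$.

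The key step is to check exactness of this complex, and for that I would invoke the Künneth formula twice, just as in Lemma \ref{resolution}. The bar resolution of $B$ as a $B$-bimodule splits as a complex of left (and of right) $B$-modules, so its cycles in each degree are projective one-sided $B$-modules; hence tensoring it over $B$ on either side with a bar resolution introduces no new homology, and the total complex has homology concentrated in degree zero, equal to $M_{a_2}\otimes_B B\otimes_B M_{a_1} = M_{a_2}\otimes_B M_{a_1}$. Each term $C^r M_{a_2} B^q M_{a_1} A^p$ is free as a $C-A$-bimodule, so this is a free bimodule resolution of $X := \Delta_{a_2a_1} = M_{a_2}\otimes_B M_{a_1}$ — wait, more precisely $X = \Delta_{a_2 a_1}$ is $t(\omega)\Lambda_\Delta s(\omega)$, which here equals $M_\omega = M_{a_2}\otimes_B M_{a_1}$ when there is an arrow parallel to $a_2a_1$, and in general is whatever bimodule appears as the coefficients; the resolution I construct is of $M_\omega$, and it is the functor $\Hom_{C-A}(-,X)$ that I apply to it.

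Finally I would apply $\Hom_{C-A}(-,X)$ to this resolution and use the adjunction $\Hom_{C-A}(C^r M_{a_2} B^q M_{a_1} A^p, X) \cong \Hom_k(C^{r-1}M_{a_2}B^{q}M_{a_1}A^{p-1}, X)$ — that is, the free generators of the degree-$n$ term are indexed by $C^{r}M_{a_2}B^{q}M_{a_1}A^{p}$ and stripping the outer $C$ and outer $A$ gives $\Hom_k$ of the remaining tensor factors — to identify the resulting cochain complex with $(\KK^\bullet_{a_2a_1}, \dd)$ of Definition \ref{omegacoho}, reindexed by $n\mapsto n-2$ (hence the shift by two). The transcription of the differential amounts to matching the alternating-sum formula obtained from the bar differentials with the explicit $\dd_n$ coming from $T_n(a_2a_1)$ and the $\tau_0^+$ rule of Proposition \ref{dftau}; this is the part requiring care with signs. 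The main obstacle I anticipate is purely bookkeeping: correctly tracking the three families of waiting times $(r,q,p)$, verifying that the trajectories in $\tau_0^+$ for a trajectory over $a_2a_1$ correspond exactly to the three ways of increasing one of $r$, $q$, $p$, and confirming that the junction terms of the bar differentials (the actions of $M_{a_2}$, $B$, $M_{a_1}$ on their neighbours) reproduce exactly the "multiply adjacent tensor factors" terms in $\dd_n$ — there is no conceptual difficulty, only the risk of an index or sign slip.
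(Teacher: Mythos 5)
Your proposal misjudges what this lemma actually asserts, and its central step is flawed. The statement is nothing more than an explicit unpacking of Definition \ref{omegacoho} for $\omega=a_2a_1$: a trajectory of duration $n$ over $a_2a_1$ is $\tau_{r,q,p}= t(a_2)^r,a_2,s(a_2)^q,a_1,s(a_1)^p$ with $r+q+p+2=n$, its evaluation is $(\tau_{r,q,p})_\Delta=C^rM_{a_2}B^qM_{a_1}A^p$, and the displayed complex is therefore literally the complex $(\KK^\bullet_{a_2a_1},\dd)$ of that definition. That three-line enumeration is the paper's entire proof; no resolution, no adjunction $\Hom_{C-A}(CYA,X)\cong\Hom_k(Y,X)$, and above all no exactness statement is required, because the cohomology of this complex is $\HH^*_{a_2a_1}(\Delta)$ \emph{by definition}. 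What you have set out to prove is really the \emph{next} result in the paper, the identification of $\HH^{2+r}_{a_2a_1}(\Delta)$ with $\Ext^r_{C-A}(M_\omega,\Delta_\omega)$.

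For that stronger statement your ``key step'' is genuinely wrong. The total complex with terms $C^{r}M_{a_2}B^{q}M_{a_1}A^{p}$ is, up to the outer free generators, $\CC^\bullet(M_{a_2})\otimes_B\CC^\bullet(M_{a_1})$, and its homology is $\Tor_\bullet^B(M_{a_2},M_{a_1})$ --- not, in general, concentrated in degree zero. Your K\"unneth argument fails because the left-module splitting and the right-module splitting of the bar resolution of $B$ cannot be exploited simultaneously once it is sandwiched as $M_{a_2}\otimes_B\mathrm{Bar}(B)\otimes_B M_{a_1}$: after tensoring on one side the resulting complex resolves $M_{a_2}$, whose cycles are no longer projective as right $B$-modules, so the second tensoring does create homology, namely $\Tor^B_{>0}(M_{a_2},M_{a_1})$. (In Lemma \ref{resolution} the argument works only because the single gluing is over $A$ on one side, against a factor whose degree-zero homology $A$ is flat.) Were your claim correct, $\Tor^B_n(M_{a_2},M_{a_1})$ would vanish unconditionally for $n>0$, and the hypothesis carried by the theorem that follows this lemma would be superfluous. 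Your bookkeeping identification of the terms and differentials with $\KK^\bullet_{a_2a_1}$ is fine, but it is the whole content of the lemma, and the exactness claim you attach to it should be deleted (for the lemma) or replaced by the Tor-vanishing hypothesis (for the theorem).
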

\begin{proof}
 Note that a trajectory $\tau_{r,q,p}\in T_n(a_2a_1)$ is
$$\tau_{r,q,p}= t(a_2)^r,a_2,s(a_2)^q,a_1,s(a_1)^p$$
such that $r+q+p+2=n$. Moreover
$$(\tau_{r,q,p})_\Delta=C^rM_{a_2}B^qM_{a_1}A^p.$$
The stated complex equals the one of Definition \ref{omegacoho} with $\omega=a_2a_1$.
\qed
\end{proof}

\begin{theo} Let $M_{a_1}$ be a $B-A$-bimodule and let $M_{a_2}$ be a $C-B$-bimodule, corresponding to $\omega=a_2a_1\in Q_2$ as previously. If $\Tor^n_B(M_{a_2},M_{a_1})=0$ for $n>0$, then
$$\HH_{a_2a_1}^{2+r}(\Delta)=\Ext^r_{C-A}(M_{\omega}, \Delta_{\omega})$$
where $M_\omega$ is the bimodule defined in \ref{Momega}.
\end{theo}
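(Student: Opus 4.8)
The plan is to follow the strategy of Theorem~\ref{cohoalongarrow}: I would construct an explicit free resolution $P^\bullet$ of $M_\omega$ as a $C-A$-bimodule for which applying $\Hom_{C-A}(-,\Delta_\omega)$ recovers, up to a shift by two, the cochain complex $\KK^\bullet_{a_2a_1}$ of the preceding Lemma. Concretely, set
\[
P^\bullet \;=\; \BB(C)\otimes_C M_{a_2}\otimes_B \CC^\bullet(M_{a_1}),
\]
where $\BB(C)$ is the two-sided bar resolution of $C$ as a $C$-bimodule and $\CC^\bullet(M_{a_1})$ is the free $B-A$-bimodule resolution of Lemma~\ref{resolution}. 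Unwinding the latter, $P^\bullet=\BB(C)\otimes_C M_{a_2}\otimes_B\BB(B)\otimes_B M_{a_1}\otimes_A\BB(A)$ with the total grading, so that $P^r=\bigoplus C^iM_{a_2}B^jM_{a_1}A^\ell$, the sum ranging over $i+j+\ell=r+2$ with $i\geq 1$, $\ell\geq 1$, $j\geq 0$; each summand is a free $C-A$-bimodule, the differential is the alternating sum of the three bar differentials (on the $C$'s, on the $B$'s and on the $A$'s), and there is an obvious augmentation $P^0=CM_{a_2}M_{a_1}A\to M_\omega$, $c\otimes m_2\otimes m_1\otimes a\mapsto (cm_2)\otimes_B(m_1a)$.

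The core of the argument is to prove that $P^\bullet\to M_\omega$ is a quasi-isomorphism, and this is where the Tor-vanishing hypothesis is used. The augmented complex $\BB(C)\to C$ is split exact as a complex of right $C$-modules and $\BB(A)\to A$ is split exact as a complex of left $A$-modules; tensoring these contracting homotopies on the suitable side produces chain homotopy equivalences $\BB(C)\otimes_C M_{a_2}\simeq M_{a_2}$ of right-$B$-module complexes and $M_{a_1}\otimes_A\BB(A)\simeq M_{a_1}$ of left-$B$-module complexes. Substituting these into the tensor product gives $P^\bullet\simeq M_{a_2}\otimes_B\BB(B)\otimes_B M_{a_1}$, which is exactly the two-sided bar complex computing $\Tor^B_\bullet(M_{a_2},M_{a_1})$. (Equivalently: the terms of $\CC^\bullet(M_{a_1})$ are free, hence flat, as left $B$-modules, so $M_{a_2}\otimes_B\CC^\bullet(M_{a_1})$ already computes $\Tor^B_\bullet(M_{a_2},M_{a_1})$, and tensoring it further with $\BB(C)$ over $C$ is harmless because $\BB(C)\to C$ is split as right-$C$-modules.) The hypothesis $\Tor^n_B(M_{a_2},M_{a_1})=0$ for $n>0$ thus says precisely that the homology of this complex is $M_{a_2}\otimes_B M_{a_1}=M_\omega$, concentrated in degree $0$; hence $P^\bullet$ is a free resolution of $M_\omega$, and one checks that the resulting quasi-isomorphism is the augmentation above.

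It then remains, exactly as in Theorem~\ref{cohoalongarrow}, to apply $\Hom_{C-A}(-,\Delta_\omega)$ to $P^\bullet$ and use on each summand the adjunction $\Hom_{C-A}(C\otimes V\otimes A,X)\cong\Hom_k(V,X)$; this identifies $\Hom_{C-A}(P^{n-2},\Delta_\omega)$ with $\bigoplus\Hom_k(C^{i-1}M_{a_2}B^jM_{a_1}A^{\ell-1},\Delta_\omega)=\KK^n_{a_2a_1}$ and, after a routine check of signs and indices, identifies the transpose of the triple bar differential with the differential $\dd$ of $\KK^\bullet_{a_2a_1}$, whose three constituent pieces are precisely the three ways of increasing a waiting time of a trajectory over $a_2a_1$. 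Since $P^\bullet$ is a projective resolution of $M_\omega$, this yields $\HH^{2+r}_{a_2a_1}(\Delta)=H^{2+r}(\KK^\bullet_{a_2a_1})=H^r\bigl(\Hom_{C-A}(P^\bullet,\Delta_\omega)\bigr)=\Ext^r_{C-A}(M_\omega,\Delta_\omega)$. The step I expect to be the main obstacle is the homological bookkeeping in the middle paragraph — arranging the three bar complexes so that the $C$- and $A$-direction contractions are carried out compatibly and so that it is transparent that the surviving $B$-direction complex is the one computing $\Tor^B(M_{a_2},M_{a_1})$; matching the differentials with $\dd$, by contrast, is tedious but purely mechanical and parallel to the case of an arrow.
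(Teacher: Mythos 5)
Your proof is correct and follows essentially the same strategy as the paper's: form a free $C-A$-bimodule resolution of $M_\omega$ by tensoring bar-type resolutions, use the Tor-vanishing hypothesis to see that the total complex is acyclic in positive degrees with $H_0=M_{a_2}\otimes_B M_{a_1}$, and dualize via $\Hom_{C-A}(C\otimes V\otimes A,X)\cong\Hom_k(V,X)$. The only divergence is the choice of complex: the paper uses $\CC^\bullet(M_{a_2})\otimes_B\CC^\bullet(M_{a_1})$, which carries a redundant bar factor of $B$ in the middle (so its $\Hom_{C-A}$-dual is only homotopy equivalent to $\KK^\bullet_{a_2a_1}$, not equal to it), whereas your $\BB(C)\otimes_C M_{a_2}\otimes_B\CC^\bullet(M_{a_1})$ dualizes to $\KK^\bullet_{a_2a_1}$ on the nose -- a slightly more careful bookkeeping of the same argument.
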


\begin{proof}
Let $\CC^\bullet(M_{a_2})$ and $\CC^\bullet(M_{a_1})$ be the free resolutions of Lemma \ref{resolution}. In particular, they are right and left $B$-projective resolutions of $M_{a_2}$ and $M_{a_1}$ respectively. Consequently, the homology of  the complex $\CC^\bullet(M_{a_2})\otimes_B\CC^\bullet(M_{a_1})$ is $\Tor^\bullet_B(M_{a_2},M_{a_1})$, see for instance \cite[Theorem 9.3]{MACLANE}. Our assumption insures that this complex is a free $C-A$  resolution of $M_{a_2a_1}$. Applying the functor $\Hom_{C-A}(-,\Delta_{a_2a_1})$ to it, and using the canonical isomorphism $$\Hom_{C-A}(CYA,X)=\Hom_k(Y,X),$$ yields the complex of cochains $\KK_{a_2a_1}^\bullet$.\qed
\end{proof}

\begin{theo}\label{tres} Let $\omega=a_3a_2a_1\in Q_3$. Let $A=A_{s(a_1)}$, $B=A_{s(a_2)}$, $C=A_{s(a_3)}$ and $D=A_{t(a_3)}$.
If $\Tor_n^B(M_{a_2}, M_{a_1})=\Tor_n^C(M_{a_3},M_{a_2a_1})=0$ for $n>0$, then

$$\HH_{\omega}^{3+r}(\Delta)=\Ext^r_{D-A}(M_{\omega}, \Delta_{\omega}).$$

\end{theo}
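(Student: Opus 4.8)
The plan is to iterate the two-step computation from the previous theorem. The key is the factorization $\omega = a_3\omega'$ with $\omega' = a_2a_1$. First I would invoke the previous theorem (the length-two case): since $\Tor^B_n(M_{a_2},M_{a_1})=0$ for $n>0$, the complex $\CC^\bullet(M_{a_2})\otimes_B\CC^\bullet(M_{a_1})$ is a free $C-A$-bimodule resolution of $M_{\omega'} = M_{a_2}\otimes_B M_{a_1}$ (this is exactly what was established inside that proof, via the K\"unneth formula). Call this resolution $\P^\bullet$. Separately, Lemma \ref{resolution} gives the canonical free $D-C$-bimodule resolution $\CC^\bullet(M_{a_3})$ of $M_{a_3}$; restricted on the right this is a right-$C$-projective resolution of $M_{a_3}$, and $\P^\bullet$ restricted on the left is a left-$C$-projective resolution of $M_{\omega'}$.

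The core step is then to form the tensor product $\CC^\bullet(M_{a_3})\otimes_C \P^\bullet$ and show it is a free $D-A$-bimodule resolution of $M_\omega = M_{a_3}\otimes_C M_{\omega'}$. Each term is a $k$-tensor product of the shape $D^{\otimes r}\otimes M_{a_3}\otimes C^{\otimes s}\otimes M_{a_2}\otimes B^{\otimes q}\otimes M_{a_1}\otimes A^{\otimes p}$, which is visibly free as a $D-A$-bimodule, and enumerating these terms by their total tensor-degree reproduces precisely the trajectories in $T_n(\omega)$, so the resulting cochain complex after applying $\Hom_{D-A}(\place,\Delta_\omega)$ and the adjunction $\Hom_{D-A}(DYA,X)=\Hom_k(Y,X)$ is exactly $\KK^\bullet_\omega$ from Definition \ref{omegacoho}. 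To see the tensor product is acyclic with homology $M_\omega$, I would again apply the K\"unneth formula as in Lemma \ref{resolution}: the homology of $\CC^\bullet(M_{a_3})\otimes_C \P^\bullet$ is $\Tor^C_\bullet(M_{a_3}, M_{\omega'})$, which vanishes in positive degrees by the second hypothesis $\Tor^C_n(M_{a_3},M_{a_2a_1})=0$, and in degree zero equals $M_{a_3}\otimes_C M_{\omega'} = M_\omega$. (To apply K\"unneth cleanly one checks, as before, that the boundaries and cycles of $\CC^\bullet(M_{a_3})$ are projective right $C$-modules, since that resolution splits over $C$ on the right; hence all the relevant $\Tor$'s between the two complexes reduce to $\Tor^C$ of $M_{a_3}$ with $M_{\omega'}$.) Applying $\Hom_{D-A}(\place,\Delta_\omega)$ then yields $\HH^{3+r}_\omega(\Delta)=\Ext^r_{D-A}(M_\omega,\Delta_\omega)$.

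The main obstacle, and the point needing care, is the bookkeeping that identifies the tensor product of the three resolutions with the trajectory complex $\KK^\bullet_\omega$: one must check that the differential coming from the three bar-type differentials matches the coboundary $\dd$ of Definition \ref{omegacoho} (the $\tau_0^+$ sum), including signs and the way the inserted $a_i'a_i''$-type terms are excluded. This is the same combinatorial matching already carried out implicitly in the length-one and length-two cases, so it is routine but must be done consistently; the degree parameter $n = r+q+p+3$ tracks the duration of trajectories over $\omega$. The other subtle point is ensuring the hypothesis is exactly what K\"unneth needs at the second stage: one uses that $\P^\bullet = \CC^\bullet(M_{a_2})\otimes_B\CC^\bullet(M_{a_1})$ is a complex of free, hence projective, left $C$-modules, so that $\Tor$ of $\CC^\bullet(M_{a_3})$ against it computes $\Tor^C(M_{a_3}, H_\bullet(\P^\bullet)) = \Tor^C(M_{a_3}, M_{\omega'})$, which is where the assumption $\Tor^C_n(M_{a_3},M_{a_2a_1})=0$ enters.

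\begin{rema}
The argument generalizes verbatim: for $\omega=a_m\cdots a_1\in Q_m$, if all the iterated $\Tor$'s $\Tor^{A_{s(a_{i})}}_n\big(M_{a_i},M_{a_{i-1}\cdots a_1}\big)$ vanish for $n>0$ and $2\le i\le m$, then $\HH^{m+r}_\omega(\Delta)=\Ext^r_{A_{t(\omega)}-A_{s(\omega)}}(M_\omega,\Delta_\omega)$, the resolution being the $m$-fold tensor product $\CC^\bullet(M_{a_m})\otimes\cdots\otimes\CC^\bullet(M_{a_1})$ over the intermediate algebras.
\end{rema}
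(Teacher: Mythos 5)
Your proposal is correct and follows essentially the same route as the paper: the paper's proof likewise tensors the resolutions of $M_{a_2}$ and $M_{a_1}$ over $B$ to get a free $C-A$ resolution of $M_{a_2a_1}$, then tensors with $\CC^\bullet(M_{a_3})$ over $C$ and applies $\Hom_{D-A}(\place,\Delta_\omega)$, the two Tor hypotheses entering exactly where you place them. Your concluding remark is precisely the paper's Theorem \ref{HHwithtorvanishing}.
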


\begin{proof}
The tensor product of the resolutions of $M_{a_2}$ and $M_{a_1}$ provides as before a free $C-A$  resolution of $M_{a_2a_1}$. In turn, we tensorize it with the resolution of $M_{a_3}$, obtaining a free $D-A$ resolution of $M_{a_3a_2a_1}$. Applying the appropriate functor yields $\KK_{a_3a_2a_1}^\bullet$. \qed
\end{proof}

Now, we consider the general situation.

\begin{defi}
Let $Q$ be a simply laced quiver and let $\Delta$ be a $Q$-set. A path $\omega= a_m\dots a_1$ of length $m\geq 2$ is \emph{Tor-vanishing} if
$$ \Tor^{A_{s(a_i)}}_n(M_{a_{i}}, M_{a_{i-1}\dots a_1})=0$$
for $i=2,\dots, m$ and for all $n>0$.
\end{defi}

The following result is a straightforward generalization of the previous theorem.
\begin{theo}\label{HHwithtorvanishing}
Let $Q$ be a simply laced quiver, let $\Delta$ be a $Q$-set and let $\omega\in Q_m$  with $m\geq 2$, be a Tor vanishing path.
For $r\geq 0$, the following holds
$$\HH_\omega^{m+r}(\Delta)=\Ext^r_{A_{t(w)}-A_{s(w)}}(M_\omega, \Delta_\omega).$$

\end{theo}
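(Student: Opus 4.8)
The strategy is to iterate the construction already used in Theorems \ref{cohoalongarrow}, \ref{tres} and in the length-two case: build a small free $A_{t(\omega)}-A_{s(\omega)}$-bimodule resolution of $M_\omega$ whose associated $\Hom_k$-complex is exactly $(\KK_\omega^\bullet,\dd)$, and then invoke the Tor-vanishing hypothesis to guarantee that this complex really is a resolution. Concretely, set $\omega=a_m\dots a_1$, write $\omega_i=a_i\dots a_1$ for the initial subpaths, and let $B_i=A_{s(a_i)}$. I would argue by induction on $m$: assume that $\CC^\bullet(M_{\omega_{m-1}})$, the $(m-1)$-fold ``iterated tensor product'' of the resolutions of Lemma \ref{resolution}, is a free $B_m - A_{s(a_1)}$-bimodule resolution of $M_{\omega_{m-1}}$ (this is the content of Theorems for $m=2,3$ and follows from the Tor-vanishing of the subpath $\omega_{m-1}$, which is part of the hypothesis that $\omega$ is Tor-vanishing). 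Then form $\CC^\bullet(M_{a_m})\otimes_{B_m}\CC^\bullet(M_{\omega_{m-1}})$.

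\textbf{Key steps.} First, each $\CC^\bullet(M_{a_i})$ from Lemma \ref{resolution} is, when restricted appropriately, a complex of projective left $A_{t(a_i)}$-modules and of projective right $A_{s(a_i)}$-modules, and moreover it splits over the relevant side as a sequence of modules (exactly as in the proof of Lemma \ref{resolution}, using that the bar resolution of an algebra splits as one-sided modules). Hence its cycles in each degree are one-sided projective. Second, I apply the K\"unneth spectral sequence / the computation ``$H_\bullet(P_\bullet\otimes_{B_m} Q_\bullet)=\Tor^{B_m}_\bullet(M_{a_m},M_{\omega_{m-1}})$'' — citing \cite[Theorem 9.3]{MACLANE} as in the $m=2$ proof — to conclude that $\CC^\bullet(M_{a_m})\otimes_{B_m}\CC^\bullet(M_{\omega_{m-1}})$ has homology concentrated in degree zero, where it equals $M_{a_m}\otimes_{B_m} M_{\omega_{m-1}} = M_\omega$, precisely because $\Tor^{B_m}_n(M_{a_m},M_{\omega_{m-1}})=0$ for $n>0$ — the $i=m$ instance of the Tor-vanishing condition. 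This gives a free $A_{t(\omega)}-A_{s(\omega)}$-bimodule resolution of $M_\omega$. Third, I apply $\Hom_{A_{t(\omega)}-A_{s(\omega)}}(-,\Delta_\omega)$ and use the adjunction $\Hom_{A_{t(\omega)}-A_{s(\omega)}}(A_{t(\omega)}\,Y\,A_{s(\omega)},X)=\Hom_k(Y,X)$ to identify the resulting cochain complex with $\KK_\omega^\bullet$; here one must check that the free summands of the iterated tensor product are exactly the $\tau_\Delta = A_{t(a_m)}^{\otimes p_{m+1}}\otimes M_{a_m}\otimes\cdots\otimes M_{a_1}\otimes A_{s(a_1)}^{\otimes p_1}$ appearing in $T_n(\omega)$, and that the differential matches $\dd$ given by the $\tau_0^+$-part of the Hochschild coboundary. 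The conclusion $\HH_\omega^{m+r}(\Delta)=\Ext^r_{A_{t(\omega)}-A_{s(\omega)}}(M_\omega,\Delta_\omega)$ then follows, with the degree shift by $m$ coming from the fact that $\KK_\omega^\bullet$ starts in cohomological degree $m$ (equivalently, the resolution of $M_\omega$ is indexed so that its degree-$0$ term corresponds to trajectories of duration $m$).

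\textbf{Main obstacle.} The routine-but-delicate part is the bookkeeping in the inductive step: verifying that the iterated tensor product $\CC^\bullet(M_{a_m})\otimes_{B_m}\cdots\otimes_{B_2}\CC^\bullet(M_{a_1})$ is termwise free as an $A_{t(\omega)}-A_{s(\omega)}$-bimodule with summands indexed precisely by $\bigcup_n T_n(\omega)$, and that the total differential on this multiple tensor product agrees — up to the sign conventions implicit in Lemma \ref{resolution} and Definition \ref{omegacoho} — with the coboundary $\dd$ after applying $\Hom_k$. One also has to be slightly careful that the Tor-vanishing hypothesis as stated ($\Tor^{A_{s(a_i)}}_n(M_{a_i},M_{a_{i-1}\dots a_1})=0$) is exactly what is needed at each stage of the induction: vanishing for the subpath $\omega_{m-1}$ gives that $\CC^\bullet(M_{\omega_{m-1}})$ is a genuine resolution (inductive hypothesis), and vanishing of $\Tor^{B_m}_\bullet(M_{a_m},M_{\omega_{m-1}})$ gives that tensoring with $\CC^\bullet(M_{a_m})$ over $B_m$ preserves acyclicity. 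Since Theorems for $m=2$ and $m=3$ already carry out exactly this argument in low length, the general case is genuinely ``a straightforward generalization'' and the write-up can be kept short by pointing to those proofs.
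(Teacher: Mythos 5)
Your proposal is correct and follows exactly the route the paper intends: the paper states this theorem with only the remark that it is "a straightforward generalization of the previous theorem," and your induction — tensoring the resolution of Lemma \ref{resolution} for $M_{a_m}$ over $A_{s(a_m)}$ with the iterated resolution of $M_{\omega_{m-1}}$, using the $i=m$ Tor-vanishing condition via \cite[Theorem 9.3]{MACLANE} to keep acyclicity, and then applying $\Hom_{A_{t(\omega)}-A_{s(\omega)}}(-,\Delta_\omega)$ with the free-bimodule adjunction to recover $\KK_\omega^\bullet$ — is precisely the argument carried out for $m=2$ and $m=3$ in the paper. Your bookkeeping remarks (that the inductive hypothesis uses exactly the Tor-vanishing of the subpath, and that the summands of the iterated tensor product are the $\tau_\Delta$ for $\tau\in T_n(\omega)$) correctly identify the only points that need checking.
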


\begin{defi}
A $Q$-set is \emph{Tor-vanishing} if all the paths of length $m\geq 2$ are Tor vanishing.
\end{defi}
Observe that if $Q$ is a quiver without cycles and if its maximal paths of length $m\geq 2$ are Tor vanishing, then the $Q$-set is Tor vanishing.

\begin{coro}

Let $Q$ be a simply laced quiver without cycles. Let $\Delta=(A,M)$ be a $Q$-set, and suppose that the maximal paths - excepting arrows - are Tor vanishing. Let $\Lambda_\Delta$ be the corresponding algebra arising from a $k$-category with zero compositions. There is a cohomology  long exact sequence as follows

\small
$$
\arraycolsep=1mm\def\arraystretch{1,2}
\begin{array}{rccclllll}
0 &\to &\HH\HH^0(\Lambda_\Delta) &\to &\HH\HH^0(A) &\to\\
\End_AM &\to &\HH\HH^1(\Lambda_\Delta) &\to &\HH\HH^1(A) &\to\\
\Hom_{A-A}(M^{\otimes_A 2},M)\oplus \Ext_{A-A}^1(M,M) &\to &\HH\HH^2(\Lambda_\Delta) &\to &\HH\HH^2(A)&\to\\
\dots\\
\bigoplus_{r+s=n}\Ext^{r}_{A-A}(M^{\otimes_A s}, M) &\to &\HH\HH^n(\Lambda_\Delta) &\to &\HH\HH^n(A)&\to\\
\dots\\

\end{array}$$

\end{coro}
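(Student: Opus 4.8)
The plan is to deduce this corollary by assembling several results already established in the excerpt. First I would invoke the long exact sequence of Corollary \ref{les nocycles}, which is valid since $Q$ has no cycles: this gives, for each $n$, the three-term pattern
\[
\bigoplus_{\delta\in DQ_{\leq n}}\HH^n_\delta(\Delta)\ \to\ \HH\HH^n(\Lambda_\Delta)\ \to\ \HH\HH^n(A)\ \to\ \bigoplus_{\delta\in DQ_{\leq n+1}}\HH^{n+1}_\delta(\Delta)\ \to\ \cdots
\]
So the only thing to do is identify the left-hand terms $\bigoplus_{\delta\in DQ_{\leq n}}\HH^n_\delta(\Delta)$ with $\bigoplus_{r+s=n}\Ext^r_{A-A}(M^{\otimes_A s},M)$.

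The key step is the identification of the cohomology along each non-cycle $\delta$. For an arrow $a$ (a path of length $1$), Theorem \ref{cohoalongarrow} gives $\HH^{1+r}_a(\Delta)=\Ext^r_{A_{t(a)}-A_{s(a)}}(M_a,M_a)$, with no Tor hypothesis needed. For a path $\delta$ of length $m\geq 2$, I would apply Theorem \ref{HHwithtorvanishing}: the hypothesis that the maximal paths are Tor-vanishing means (since any path is a sub-path of a maximal one, and sub-paths of Tor-vanishing paths are Tor-vanishing by the definition, as the Tor conditions for $\delta = a_m\cdots a_1$ are a subset of those for a longer path extending it) that every $\delta\in DQ_m$ with $m\geq 2$ is Tor-vanishing, so
\[
\HH^{m+r}_\delta(\Delta)=\Ext^r_{A_{t(\delta)}-A_{s(\delta)}}(M_\delta,\Delta_\delta).
\]
Here I must be slightly careful: $\Delta_\delta$ is $M_b$ if there is an arrow $b$ parallel to $\delta$, and $0$ otherwise. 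But since $Q$ is simply laced, the only arrow with the same source and target as $\delta$ could be $\delta$ itself when $m=1$; for $m\geq 2$ there is no parallel arrow unless one exists in $Q_1$, and in the stated formula the relevant summand is absorbed into $\Ext^r_{A-A}(M^{\otimes_A s},M)$ via $M=\oplus_{a\in Q_1}M_a$ and $M^{\otimes_A s}=\oplus_{\omega\in Q_s}M_\omega$. I would make this bookkeeping explicit: grouping $\delta\in DQ$ by length $s=m$ and setting $r$ so that $s+r=n$, the summand from $\delta$ contributes $\Ext^r_{A_{t(\delta)}-A_{s(\delta)}}(M_\delta,\Delta_\delta)$, and summing over all $\delta$ of length $s$ yields exactly $\Ext^r_{A-A}(M^{\otimes_A s},\,M)$ because $\Delta_\delta\subseteq M$ picks out the $M_b$-component and $\Ext$ over the product algebra $A=\times A_x$ splits as the direct sum over pairs $(x,y)$.

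The main obstacle — though a minor one — is verifying that the Tor-vanishing hypothesis on \emph{maximal} paths indeed implies the hypothesis needed to apply Theorem \ref{HHwithtorvanishing} to \emph{every} path of length $\geq 2$. This is precisely the content of the remark preceding the corollary ("if $Q$ is a quiver without cycles and if its maximal paths of length $m\geq 2$ are Tor vanishing, then the $Q$-set is Tor vanishing"), so I would simply cite that observation. Once this is in hand, substituting the identifications $\HH^n_\delta(\Delta)=\Ext^{n-|\delta|}_{A-A}(M_\delta,\Delta_\delta)$ into the long exact sequence of Corollary \ref{les nocycles}, and collecting terms degree by degree (noting $DQ_{\leq 1}=Q_1$ contributes $\End_A M$ in degree $1$ and $\Ext^1_{A-A}(M,M)$ in degree $2$, while $DQ_2$ contributes $\Hom_{A-A}(M^{\otimes_A 2},M)$ in degree $2$, etc.), produces exactly the displayed sequence. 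The verification that the low-degree terms match the general formula $\bigoplus_{r+s=n}\Ext^r_{A-A}(M^{\otimes_A s},M)$ is routine, using $\HH\HH^0(A)=\ZZ A$ and $M^{\otimes_A 1}=M$, $M^{\otimes_A 0}$ not appearing since $s\geq 1$ for a non-cycle.
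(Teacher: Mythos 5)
Your proposal is correct and follows essentially the same route as the paper's proof: invoke the long exact sequence of Corollary \ref{les nocycles}, identify each $\HH^n_\delta(\Delta)$ via Theorem \ref{cohoalongarrow} (arrows) and Theorem \ref{HHwithtorvanishing} (longer paths), and regroup the summands over paths of fixed length $s$ into $\bigoplus_{r+s=n}\Ext^{r}_{A-A}(M^{\otimes_A s},M)$ using $\bigoplus_{\omega\in Q_s}M_\omega=M^{\otimes_A s}$ and the vanishing of $\Ext$ between components with mismatched source or target. The one caveat is that your parenthetical argument that sub-paths inherit Tor-vanishing is literally valid only for sub-paths sharing the same initial arrow $a_1$, but you rightly fall back on the paper's own preceding observation for this reduction, exactly as the paper's proof implicitly does.
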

\begin{proof}
Since $CQ_m=\emptyset$ for $m\neq 0$, we consider the long exact sequence of Corollary \ref{les nocycles}.  For a path $\omega$ of length $m\geq 2$,  Theorem \ref{HHwithtorvanishing} provides
$$\HH_\omega^{m+r}(\Delta)=\Ext^r_{A_{t(w)}-A_{s(w)}}(M_\omega, \Delta_\omega).$$
Observe that $\bigoplus_{\omega\in Q_m}M_\omega = M^{\otimes_A m}$. Moreover if $a$ is an arrow such that $M_a\neq \Delta_\omega$, then $\Ext^r_{A_{t(w)}-A_{s(w)}}(M_\omega, M_a)=0$. We infer
\begin{equation}
\bigoplus_{\omega\in Q_m} \Ext^*_{A_{t(\omega)}-A_{s(\omega)}}(M_\omega, \Delta_\omega)= \Ext^*_{A-A}(M^{\otimes_A m},M).
\end{equation}
\qed
\end{proof}

In case $Q$ has oriented cycles, we denote
$$(M^{\otimes_A m})_D =  \bigoplus_{\delta\in DQ_m}M_\delta \mbox{\  \ and \ } (M^{\otimes_A m})_C = \bigoplus_{\gamma\in CQ_m}M_\gamma.$$
There is a decomposition
\begin{equation}\label{Mm split}
M^{\otimes_A m} = (M^{\otimes_A m})_D \oplus (M^{\otimes_A m})_C.
\end{equation}

\begin{coro}
Let $Q$ be a simply laced quiver and $\Delta=(A,M)$  a Tor vanishing $Q$-set, and let  $\Lambda_\Delta$ be the corresponding algebra arising from a $k$-category with zero compositions. There is a cohomology long exact sequence as follows

\footnotesize
$$
\hskip-0.5cm
\arraycolsep=0.1mm\def\arraystretch{1,4}
\begin{array}{rccclllll}
0 &\to &\HH\HH^0(\Lambda_\Delta) &\to &\HH\HH^0(A) &\to\\
\End_{A-A}M &\to &\HH\HH^1(\Lambda_\Delta) &\to &\HH\HH^1(A) &\to\\
\Hom_{A-A}((M^{\otimes_A 2})_D,M)\oplus \Ext_{A-A}^1(M,M) &\to &\HH\HH^2(\Lambda_\Delta) &\to &\HH\HH^2(A)\oplus \Hom_{A-A}((M^{\otimes_A 2})_C,A)&\to\\
\dots\\
\bigoplus_{r+s=n}^n\Ext^{r}_{A-A}((M^{\otimes_A s})_D, M) &\to &\HH\HH^n(\Lambda_\Delta) &\to &\HH\HH^n(A)\oplus\bigoplus_{r+s=n}\Ext^{r}_{A-A}((M^{\otimes_A s})_C, A) &\to\\
\dots\\

\end{array}$$

\end{coro}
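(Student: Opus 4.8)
The plan is to start from the cohomology long exact sequence of Theorem \ref{les} and to rewrite its ``cohomology along a path'' terms using the $\Ext$ computations already available, regrouping the resulting direct sums afterwards. Recall that in Theorem \ref{les} the $n$-th term on the left is $\bigoplus_{\delta\in DQ_{\leq n}}\HH^n_\delta(\Delta)$, while the $n$-th term on the right is $\HH\HH^n(A)\oplus\bigoplus_{\gamma\in CQ_{\leq n},\,\gamma\notin Q_0}\HH^n_\gamma(\Delta)$. The summand $\HH\HH^n(A)$ is already in the desired shape and is left untouched; only the $\HH^n_\delta(\Delta)$ and the $\HH^n_\gamma(\Delta)$ need to be transformed, the maps of the sequence being carried along the isomorphisms produced below.

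First I would fix such a path $\omega\in Q_m$ with $m\le n$ and write $n=m+r$, $r\ge 0$. If $\omega$ is an arrow, Theorem \ref{cohoalongarrow} gives $\HH^{1+r}_\omega(\Delta)=\Ext^r_{A_{t(\omega)}-A_{s(\omega)}}(M_\omega,\Delta_\omega)$ (here $\Delta_\omega=M_\omega$), with no hypothesis needed. If $m\ge 2$ then, since $\Delta$ is Tor vanishing, Theorem \ref{HHwithtorvanishing} gives the same identity $\HH^{m+r}_\omega(\Delta)=\Ext^r_{A_{t(\omega)}-A_{s(\omega)}}(M_\omega,\Delta_\omega)$. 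As there are no loops in $Q$, every cycle $\gamma\notin Q_0$ has length $\ge 2$, so this covers all the $\gamma$'s occurring in the sequence; here $\Delta_\gamma=A_{s(\gamma)}=A_{t(\gamma)}$ by Definition \ref{counterpart}, whereas for a non cycle $\delta$ one has $\Delta_\delta=M_a$ when some arrow $a$ is parallel to $\delta$ and $\Delta_\delta=0$ otherwise.

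The remaining step is the bookkeeping on the direct sums. Since $A\otimes_k A^{\mathsf{op}}=\prod_{x,y\in Q_0}A_y\otimes_k A_x^{\mathsf{op}}$, and $M=\bigoplus_{a\in Q_1}M_a$, $A=\bigoplus_{x\in Q_0}A_x$ as $A$-bimodules, an $\Ext$ group between two block-supported $A$-bimodules vanishes unless their supporting blocks coincide, in which case it equals the $\Ext$ over the relevant factor $A_y\otimes_k A_x^{\mathsf{op}}$; this is exactly the Schur-type observation already used in the proof of Proposition \ref{decompose}. Combining it with the above description of $\Delta_\delta$ and $\Delta_\gamma$, and with the identifications $(M^{\otimes_A m})_D=\bigoplus_{\delta\in DQ_m}M_\delta$ and $(M^{\otimes_A m})_C=\bigoplus_{\gamma\in CQ_m}M_\gamma$ from Definition \ref{Momega} and (\ref{Mm split}), one gets
$$\bigoplus_{\delta\in DQ_m}\HH^{m+r}_\delta(\Delta)=\Ext^r_{A-A}\big((M^{\otimes_A m})_D,M\big),\qquad\bigoplus_{\substack{\gamma\in CQ_m\\ \gamma\notin Q_0}}\HH^{m+r}_\gamma(\Delta)=\Ext^r_{A-A}\big((M^{\otimes_A m})_C,A\big),$$
the cycle-side contributions with $m\le 1$ being zero. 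Reindexing the sequence of Theorem \ref{les} by $n=m+r$ (with $m\ge 1$ on the non cycle side) then produces precisely the stated long exact sequence. The only point that requires genuine care is this block decomposition of $\Ext$ over the enveloping algebra $A\otimes_k A^{\mathsf{op}}$, and it is immediate from its product decomposition; everything else is a direct substitution into Theorem \ref{les}.
\qed
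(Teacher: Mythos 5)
Your proposal is correct and follows essentially the same route as the paper: the paper states this corollary without a separate proof, but the argument it intends is exactly the one it gives for the preceding no-cycles case — substitute Theorem \ref{cohoalongarrow} and Theorem \ref{HHwithtorvanishing} into the long exact sequence of Theorem \ref{les} and regroup the summands blockwise, with the cycle/non-cycle split handled by the decomposition (\ref{Mm split}). Your explicit treatment of the block decomposition of $\Ext$ over $A\otimes A^{\mathsf{op}}$ and of the vanishing of the cycle-side terms in lengths $0$ and $1$ is exactly the bookkeeping the paper leaves implicit.
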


\begin{coro}\label{pro}
Let $Q$ be a simply laced quiver and $\Delta=(A,M)$  a $Q$-set where $M$ is projective as $A$-bimodule. Let $\Lambda_\Delta$ be the corresponding algebra arising from a $k$-category with zero compositions. The cohomology long exact sequence is as follows
$$
\small
\hskip-0.3cm
\def\arraystretch{1,2}
\begin{array}{rccclllll}
0 &\to &\HH\HH^0(\Lambda_\Delta) &\to &\HH\HH^0(A) &\to\\
\End_{A-A}M &\to &\HH\HH^1(\Lambda_\Delta) &\to &\HH\HH^1(A) &\to\\
\Hom_{A-A}((M^{\otimes_A 2})_D,M)&\to &\HH\HH^2(\Lambda_\Delta) &\to &\HH\HH^2(A)\oplus \Hom_{A-A}((M^{\otimes_A 2})_C,A)&\to\\
\dots\\
\Hom_{A-A}((M^{\otimes_A n})_D, M) &\to &\HH\HH^n(\Lambda_\Delta) &\to &\HH\HH^n(A)\oplus\Hom_{A-A}((M^{\otimes_A n})_C, A) &\to\\
\dots\\
\end{array}$$
\end{coro}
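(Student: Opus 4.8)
The strategy is to deduce the statement from the long exact sequence established in the preceding corollary for Tor-vanishing $Q$-sets, by showing that under the projectivity hypothesis on $M$ all the positive-degree $\Ext$-terms appearing there vanish. So first I would verify that $\Delta$ is a Tor-vanishing $Q$-set. Write $e_x$ for the central idempotent of $A=\times_{x\in Q_0}A_x$ supported at the vertex $x$, so that $M=\bigoplus_{a\in Q_1}M_a$ with $M_a=e_{t(a)}Me_{s(a)}$, a decomposition which is in particular one of right $A$-modules. If $M$ is projective as an $A$-bimodule it is a direct summand of a free $A$-bimodule $(A\otimes A)^{(I)}$; since $A\otimes A$ is free as a right $A$-module (acting on the second tensor factor), $M$ is projective as a right $A$-module, hence so is its direct summand $M_a$, and since $M_a$ is annihilated on the right by every $e_y$ with $y\neq s(a)$, it is a projective right $A_{s(a)}$-module. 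In particular $M_{a_i}$ is flat over $A_{s(a_i)}$, so $\Tor^{A_{s(a_i)}}_n(M_{a_i},M_{a_{i-1}\cdots a_1})=0$ for all $n>0$; thus every path of length $\geq 2$ is Tor-vanishing and the long exact sequence of the previous corollary is available.

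Next I would show that $M^{\otimes_A s}$ is projective as an $A$-bimodule for every $s\geq 1$. For $s=1$ this is the hypothesis, and for the inductive step one uses that the tensor product over $A$ of two free $A$-bimodules is again free: there is an isomorphism of $A$-bimodules $(A\otimes A)\otimes_A(A\otimes A)\cong A\otimes A\otimes A$ with $A$ acting on the two outer tensor factors, and the right-hand side, being of the form $A\otimes_k V\otimes_k A$ with $V=A$, is a direct sum of copies of $A\otimes A$ indexed by a $k$-basis of $V$, hence free. Therefore $M^{\otimes_A s}$ is a direct summand of a free $A$-bimodule, and by the decomposition $(\ref{Mm split})$ so are $(M^{\otimes_A s})_D$ and $(M^{\otimes_A s})_C$. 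Consequently $\Ext^r_{A-A}((M^{\otimes_A s})_D,M)=0$ and $\Ext^r_{A-A}((M^{\otimes_A s})_C,A)=0$ for all $r>0$ and all $s\geq 1$.

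Finally I would insert these vanishings into the Tor-vanishing long exact sequence. In degree $n$ the left-hand term $\bigoplus_{r+s=n}\Ext^r_{A-A}((M^{\otimes_A s})_D,M)$ reduces to its single surviving summand, the one with $r=0$ and $s=n$, namely $\Hom_{A-A}((M^{\otimes_A n})_D,M)$; in the same way the cyclic contribution to the right-hand term reduces to $\Hom_{A-A}((M^{\otimes_A n})_C,A)$, while the summand $\HH\HH^n(A)$ is left untouched. This yields precisely the long exact sequence of the statement. I expect the only genuinely nonformal point to be the identification of $(A\otimes A)\otimes_A(A\otimes A)$ with a free $A$-bimodule, which is exactly what upgrades "$M$ projective" to "$M^{\otimes_A s}$ projective"; everything else is bookkeeping with Peirce idempotents and with the exact sequence already in hand.
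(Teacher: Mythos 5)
Your proposal is correct and follows essentially the same route as the paper: verify that projectivity of $M$ forces the $Q$-set to be Tor-vanishing, then specialize the long exact sequence of the preceding corollary. You in fact spell out a step the paper leaves implicit, namely that the tensor powers $M^{\otimes_A s}$ remain projective $A$-bimodules so that all the terms $\Ext^r_{A-A}$ with $r>0$ vanish and only the $\Hom$ summands survive.
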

\begin{proof}
Recall that $A=\times_{x\in Q_0}A_x$ and $M=\oplus_{a\in Q_1} M_a$ where $M_a$ is an $A_{t(a)}-A_{s(a)}$-bimodule. Hence $M$ is projective as $A$-bimodule if and only if  for every $a\in Q_1$ the bimodule $M_a$ is projective. In turn this implies that the $Q$-set is Tor vanishing. The previous corollary therefore gives the long exact sequence.\qed
\end{proof}
\section{\sf Multiplicative structures}\label{multiplicative}

The Hochschild cohomology of a $k$-algebra $\Lambda$ is an associative algebra with the cup product.  Gerstenhaber in \cite{GERSTENHABER} proved that it is graded  commutative. We recall that if $f'$ and $f$ are cochains of degrees $n'$ and $n$ of the complex of Lemma \ref{byseparable} - for $Z=\Lambda$, their \emph{cup product} $f'\smile f$ is the composition
$$\Lambda^{\otimes_D{(n'+n)}}\cong\Lambda^{\otimes_Dn'}\otimes_D\Lambda^{\otimes_Dn}
\stackrel{f'\otimes_D f}{\xrightarrow{\hspace*{15mm}}}\Lambda\otimes_D\Lambda\longrightarrow\Lambda$$
where the last map is the product of $\Lambda$. The graded Leibniz rule
$$d(f'\smile f) = d(f')\smile f+(-1)^{n'}f'\smile d(f)$$ holds, so the cup product is well defined in cohomology.

Our next purpose is to consider a cup product on the cohomology along paths of a simply laced quiver $Q$ provided with a set $\Delta$.

\begin{defi}\label{cupalongpathsdefi}
 Let $\omega'$ and $\omega$ be paths of $Q$ and let $\tau'$ and $\tau$ be trajectories over them, of durations $n'$ and $n$ respectively. Let $f'_{\tau'}:\tau'_\Delta\to\Delta_{\omega'}$ and $f_\tau:\tau_\Delta\to\Delta_{\omega}$ be cochains of $(\KK_{\omega'}^\bullet, \dd)$ and $(\KK_\omega^\bullet, \dd)$ respectively - see Definition \ref{omegacoho}. Their \emph{cup product} $f'_{\tau'}\smile f_\tau$ is as follows:
\begin{itemize}
\item If $\omega'$ and $\omega$ are not concatenable - that is if $s(\omega')\neq t(\omega)$, then  $f'_{\tau'}\smile f_\tau=0.$
\item If $\omega'$ and $\omega$ are concatenable, let $\tau'\tau$ be the obvious trajectory of duration $n'+n$ over the concatenated path $\omega'\omega$. The \emph{cup product} $f'_{\tau'}\smile f_\tau$ is the composition
$$({\tau'\tau})_\Delta = \tau'_\Delta\otimes \tau_\Delta \stackrel{f'_{\tau'}\otimes f_\tau}{\xrightarrow{\hspace*{15mm}}}\Delta_{\omega'}\otimes\Delta_\omega\longrightarrow\Delta_{\omega'\omega}$$
where the last map is the product in the $k$-algebra  $\Lambda_\Delta$.
\end{itemize}
\end{defi}

\begin{rema}
If the $k$-algebra arises from a $k$-category with zero compositions, and if both $\omega'$ and $\omega$ are not cycles, then the above product map is zero.
\end{rema}

\begin{prop}\label{cup}
The graded Leibniz rule $$\dd(f'\smile f)=\dd (f')\smile f +(-1)^{n'}f'\smile\dd (f)$$ holds, and there is a well defined cup product
$$\HH^{n'}_{\omega'} \otimes \HH^{n}_{\omega}\longrightarrow \HH^{n'+n}_{\omega'\omega}.$$
\end{prop}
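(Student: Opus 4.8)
The plan is to reduce the statement to the ordinary Leibniz rule for the bar-type coboundary by carefully matching the decomposition of cochains along trajectories (Proposition \ref{decompose}) with the cup product of Definition \ref{cupalongpathsdefi}. First I would fix paths $\omega'\in Q_{m'}$ and $\omega\in Q_m$ and cochains $f'\in\KK^{n'}_{\omega'}$ and $f\in\KK^n_\omega$, decomposed as $f'=\sum_{\tau'\in T_{n'}(\omega')}f'_{\tau'}$ and $f=\sum_{\tau\in T_n(\omega)}f_\tau$. If $s(\omega')\neq t(\omega)$ then $f'\smile f=0$ by definition, and one checks that all three terms on the right vanish as well (since $\dd$ preserves the underlying path up to the operations in Definition \ref{plustrajectories}, none of which can make $s(\omega')$ equal to $t(\omega)$), so the identity is trivially true; hence I would assume $\omega'$ and $\omega$ are concatenable.

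The heart of the argument is then a term-by-term comparison on a fixed summand $(\tau'\tau)_\Delta$ of $(\Lambda_\Delta)^{\otimes_D(n'+n)}$, where $\tau'\tau$ is the concatenated trajectory over $\omega'\omega$. I would write out $\dd(f'\smile f)$ using the explicit coboundary formula from Lemma \ref{byseparable} (restricted to its $\tau_0^+$-part, which is all that survives by Proposition \ref{dftau} and the observations in Remark \ref{notsub}, since the $\tau_1^+$ and $\tau_2^+$ contributions are killed by the zero-composition hypothesis or do not land in the relevant summand). Splitting an element $x_1\otimes\cdots\otimes x_{n'+n+1}\in(\sigma)_\Delta$ with $\sigma\in(\tau'\tau)_0^+$ at the "junction" between the $\omega'$-part and the $\omega$-part, the terms of the coboundary formula split into three groups: those in which all the multiplications $x_ix_{i+1}$ happen strictly inside the $\omega'$-block (these reassemble into $(\dd f')\smile f$), those strictly inside the $\omega$-block (these reassemble into $f'\smile\dd f$ up to the sign $(-1)^{n'}$ coming from moving the boundary operator past $f'$), and the single "crossing" term $x_{n'}x_{n'+1}$ together with the boundary term $x_1 f'(\cdots)f(\cdots)x_{n'+n+1}$ — these cancel because the last map in the cup product is the associative product of $\Lambda_\Delta$, exactly as in the classical Gerstenhaber computation recalled before Proposition \ref{cup}. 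This is essentially the proof that the cup product on the bar complex satisfies the Leibniz rule, transported to the trajectory-indexed setting.

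The main obstacle, and the step deserving care, is bookkeeping the signs and making sure the junction really does behave classically: in the trajectory decomposition a single increase-by-one of a waiting time at the vertex $s(\omega')=t(\omega)$ is precisely the waiting time that "belongs" simultaneously to the tail of $\tau'$ and the head of $\tau$, so one must check that the corresponding term of $\dd(f'\smile f)$ is correctly attributed and not double-counted. Once this is settled, the graded Leibniz rule for $\dd$ on $\KK^\bullet_{\omega'\omega}$ follows, and the induced pairing on cohomology $\HH^{n'}_{\omega'}\otimes\HH^n_\omega\to\HH^{n'+n}_{\omega'\omega}$ is well defined by the usual argument: the Leibniz rule shows that a product of cocycles is a cocycle, and that changing either factor by a coboundary changes the product by a coboundary. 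I would close by noting that when the algebra arises from a $k$-category with zero compositions and both $\omega'$, $\omega$ are non-cycles, the pairing is identically zero (Remark after Definition \ref{cupalongpathsdefi}), so the only nontrivial instances of the product involve at least one cycle.
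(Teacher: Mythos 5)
Your proposal is correct and is precisely the ``usual'' Gerstenhaber computation that the paper invokes without writing out: the classical Leibniz identity on $\JJ^\bullet$, projected onto the trajectory summands over $\omega'\omega$, with the crossing terms at the junction cancelling as in the bar-complex case. One minor point: you do not need the zero-composition hypothesis to discard the $\tau_1^+$ and $\tau_2^+$ contributions (the proposition holds for a general $Q$-set), since $\dd$ is \emph{defined} in Definition \ref{omegacoho} as the $\tau_0^+$-truncation and, as you also correctly note, those contributions land in summands indexed by trajectories over paths other than $\omega'$, $\omega$ and $\omega'\omega$.
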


The proof of the proposition is the usual one, taking into account the different cases which occur in this setting.

\begin{rema}
 Let $xQx$ be the set of cycles of $Q$ at the vertex $x$. The cup product  defined above provides a $k$-algebra structure on
 $$\bigoplus_{\omega\in xQx}\HH^{\bullet}_{\omega}.$$
 Moreover $\HH\HH^\bullet(A_x)$ is a subalgebra and its complement in the direct sum is a two-sided ideal.
\end{rema}
The proof of the following result is straightforward.
\begin{theo}\label{imagem}
  Let $Q$ be a simply laced quiver, let $\Delta$ be a $Q$-set and let $\Lambda_\Delta$ be the corresponding algebra arising from a $k$-category with zero compositions.
\begin{itemize}
  \item The cup product in  $\HH\HH^n(\Lambda_\Delta)$, restricted to the images of the maps

$$
\bigoplus_{\delta\in DQ_{\leq n}}\HH^n_\delta(\Delta) \longrightarrow\HH\HH^n(\Lambda_\Delta)$$

of the cohomology long exact sequence of Theorem \ref{les} is zero. In other words, the cup product annihilates on cocycles which belong to $\DD^\bullet$.
  \item
  The family of maps

$$\HH\HH^n(\Lambda_\Delta) \longrightarrow \HH\HH^n(A)\oplus\bigoplus_{\substack{\gamma\in CQ_{\leq n}\\\gamma\notin Q_0}}\HH^n_\gamma(\Delta)$$

of the cohomology long exact sequence of Theorem \ref{les} provide a graded algebra map.
\end{itemize}

\end{theo}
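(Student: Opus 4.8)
The plan is to work entirely at the level of cochains, using the decomposition $\JJ^\bullet(\Lambda_\Delta)=\DD^\bullet\oplus\CC^\bullet$ from Proposition \ref{decompose} together with the fact that $M^2=0$, which holds because $\Lambda_\Delta$ is a cleft singular extension $A\oplus M$. First I would record how the cup product of two Lemma \ref{byseparable} cochains $f'\in\JJ^{n'}$ and $f\in\JJ^{n}$ interacts with the trajectory decomposition: for a trajectory $\sigma$ of duration $n'+n$ over a path $\omega$, cutting $\sigma$ after its $n'$-th tensor slot yields a unique factorization $\omega=\omega'\omega''$ into concatenable paths, together with trajectories $\sigma'$ over $\omega'$ of duration $n'$ and $\sigma''$ over $\omega''$ of duration $n$ with $\sigma_\Delta=\sigma'_\Delta\otimes\sigma''_\Delta$, and $(f'\smile f)|_{\sigma_\Delta}$ is the composite of $f'|_{\sigma'_\Delta}\otimes f|_{\sigma''_\Delta}$ with the product of $\Lambda_\Delta$ --- that is, the cup product along paths of Definition \ref{cupalongpathsdefi}. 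The only combinatorial input I would need, immediate from $s(\omega)=s(\omega'')$, $t(\omega)=t(\omega')$ and $s(\omega')=t(\omega'')$, is that $\omega$ is a cycle if and only if both $\omega'$ and $\omega''$ are cycles.

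For the first assertion, I would take cocycles $z'\in\DD^{n'}$ and $z\in\DD^{n}$ representing elements of the images of the maps $\bigoplus_{\delta\in DQ_{\le n'}}\HH^{n'}_\delta(\Delta)\to\HH\HH^{n'}(\Lambda_\Delta)$ and $\bigoplus_{\delta\in DQ_{\le n}}\HH^{n}_\delta(\Delta)\to\HH\HH^{n}(\Lambda_\Delta)$, which are induced by the inclusion $\DD^\bullet\hookrightarrow\JJ^\bullet$. For every trajectory $\sigma$, nonvanishing of $(z'\smile z)|_{\sigma_\Delta}$ would require $z'|_{\sigma'_\Delta}\ne 0$ and $z|_{\sigma''_\Delta}\ne 0$, hence both $\omega'$ and $\omega''$ to be non cycles; but then $\Delta_{\omega'}$ and $\Delta_{\omega''}$ are submodules of $M$, so their product in $\Lambda_\Delta$ lies in $M^2=0$. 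Thus $z'\smile z=0$ already as a cochain; this is exactly the statement that the cup product annihilates on cocycles of $\DD^\bullet$, and a fortiori the cup product of the corresponding classes in $\HH\HH^{n'+n}(\Lambda_\Delta)$ vanishes.

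For the second assertion, recall that the map in question is induced by the chain map $\pi\colon\JJ^\bullet\to\CC^\bullet$ obtained by projecting off the subcomplex $\DD^\bullet$, with target identified with $\bigoplus_{\gamma\in CQ}\HH^\bullet_\gamma(\Delta)=\HH\HH^\bullet(A)\oplus\bigoplus_{\gamma\notin Q_0}\HH^\bullet_\gamma(\Delta)$ carrying the cup product along cycles (well defined by Proposition \ref{cup}, and preserving $\CC^\bullet$ since the product of two cochains supported on cycles is again supported on cycles, by the combinatorial input above). It then remains to check that $\pi$ is multiplicative and unital. Writing $f'=f'_\DD+f'_\CC$ and $f=f_\DD+f_\CC$ and expanding $f'\smile f$ into four terms, one obtains: $f'_\DD\smile f_\DD=0$ identically, as in the first assertion; $f'_\DD\smile f_\CC$ and $f'_\CC\smile f_\DD$ are supported on trajectories over non cycles --- because in the cut of such a trajectory a non-cyclic prefix (resp.\ suffix) forces the whole path to be a non cycle --- hence lie in $\DD^\bullet$ and are killed by $\pi$; and $f'_\CC\smile f_\CC$ is supported on cycles, where it agrees summand by summand with the cup product along cycles of $\pi(f')$ and $\pi(f)$. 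Hence $\pi(f'\smile f)=\pi(f')\smile\pi(f)$; as $\pi$ also fixes the unit cochain $1_{\Lambda_\Delta}$, which is supported on the vertices of $Q$, it is a morphism of graded algebras, and passing to cohomology gives the second assertion. The only step requiring genuine care is the first one --- verifying that the global cup product really does decompose along the cuts of trajectories --- after which the whole argument reduces to the dichotomy cycle versus non cycle, together with $M^2=0$.
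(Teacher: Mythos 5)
Your overall strategy --- working at the cochain level with the splitting $\JJ^\bullet=\DD^\bullet\oplus\CC^\bullet$, observing that the global cup product decomposes along the cuts of trajectories into the cup product of Definition \ref{cupalongpathsdefi}, and then sorting the four terms $f'_\DD\smile f_\DD$, $f'_\DD\smile f_\CC$, $f'_\CC\smile f_\DD$, $f'_\CC\smile f_\CC$ by the cycle/non-cycle dichotomy together with $M^2=0$ --- is exactly the argument the paper has in mind (it offers none, declaring the proof straightforward), and your treatment of the first bullet and of the term $f'_\CC\smile f_\CC$ is correct. However, your stated combinatorial input is false in one direction: a cycle $\omega=\omega'\omega''$ need \emph{not} have both factors cycles. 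In the round trip quiver ${}_x\cdot\rightleftarrows\cdot_y$ the path $ba$ is a cycle at $x$ while $b$ and $a$ are not. What is true is only that if both factors are cycles then so is $\omega$, and that if exactly one factor is a cycle then $\omega$ is not (a cyclic factor identifies its endpoints, so $\omega$ is then a cycle if and only if the other factor is). Your justification that $f'_\DD\smile f_\CC$ and $f'_\CC\smile f_\DD$ lie in $\DD^\bullet$ invokes the false direction (``a non-cyclic prefix forces the whole path to be a non cycle''); it should instead invoke the correct statement just given, using that on the support of the $\CC$-factor the other piece of the cut is a cycle. With that one correction the argument is complete. Note that the genuinely delicate case --- a trajectory over a cycle whose cut produces two non-cycles --- is precisely where combinatorics does not help and the hypothesis of zero compositions, i.e.\ $M^2=0$, is what kills the $\DD\smile\DD$ contribution, while the $\CC\smile\CC$ contribution vanishes there simply because $f'_\CC$ and $f_\CC$ are supported on cycles.
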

Our next purpose is to give a formula for the connecting homomorphism
$$ \HH\HH^n(A)\oplus\bigoplus_{\substack{\gamma\in CQ_{\leq n}\\\gamma\notin Q_0}}\HH^n_\gamma(\Delta)\stackrel{\nabla_n}{\xrightarrow{\hspace*{10mm}}} \bigoplus_{\delta\in DQ_{\leq n+1}}\HH^{n+1}_\delta(\Delta)$$
 of the cohomology long exact sequence of Theorem \ref{les}.

 Observe that for each arrow $a$ the identity map $1_{M_a}$ is a canonical element of $\HH^1_a(\Delta)= \End_{A_{t(a)}-A_{s(a)}} M_a$. Moreover, $1_M=\sum_{a\in Q_1}1_{M_a} \in \End_{A-A}M$.

\begin{theo}\label{nabla}
For $m\leq n$, let $\gamma\in CQ_m$ and let $f\in \HH^n_\gamma(\Delta)$. The following holds:
$$\nabla_n (f) = 1_M\smile f + (-1)^{n+1} f\smile 1_M.$$
Moreover, $\nabla_n$ is of degree $1$ with respect to the length of the paths, that is
$$\nabla_n (f) \in \bigoplus_{\delta\in DQ_{m+1}}\HH^{n+1}_\delta(\Delta).$$
\end{theo}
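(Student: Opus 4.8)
The statement describes the connecting homomorphism $\nabla_n$ of the long exact sequence attached to the short exact sequence of complexes $0\to\DD^\bullet\to\JJ^\bullet\to\CC^\bullet\to 0$. The plan is to unwind the snake-lemma definition of $\nabla_n$ at the cochain level and to recognize the result as a graded commutator with the canonical $1$-cocycle $1_M$. Concretely, let $\gamma\in CQ_m$ and let $f=f_\tau$ be represented by a $d'$-cocycle lying in $\CC^n=\bigoplus_{\gamma}\JJ^n_\gamma$. Lift it to the same element $f$ viewed inside $\JJ^n(\Lambda_\Delta)=\DD^n\oplus\CC^n$, apply the full Hochschild coboundary $d$ of Lemma \ref{byseparable}, and project onto $\DD^{n+1}$; by the general nonsense of connecting maps, $\nabla_n(f)$ is the class of $(df)_{\DD}$, which is a cocycle in $\DD^{n+1}$. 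So the whole content is an identification of $(df)_{\DD}$.

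\textbf{Key steps.} First I would use Proposition \ref{preserve} together with Remark \ref{notsub}: for a trajectory $\tau$ over a cycle $\gamma$ we have $df_\tau=\sum_{\sigma\in\tau_0^+}(df_\tau)_\sigma+\sum_{\sigma\in\tau_1^+}(df_\tau)_\sigma$, the $\tau_2^+$-part vanishing, and the $\tau_0^+$-part being exactly $d'f_\tau$, which lies in $\CC^{n+1}$. Hence $(df)_{\DD}=\sum_{\sigma\in\tau_1^+}(df_\tau)_\sigma$, the sum over trajectories obtained by prepending or appending a single arrow $c$ to $\gamma$. Second, I would read off from the Hochschild differential formula which summands actually survive on $\sigma_\Delta$ when $\sigma=t(c)^0,c,s(c)^{p_{m+1}},\dots$ comes from appending $c$ after $\gamma$: since the image of $f_\tau$ lies in $\Delta_\gamma=A_{t(\gamma)}$ and $c$ acts through $M_c$, the only nonzero contribution is $x_1 f_\tau(x_2\otimes\cdots\otimes x_{n+1})$ with $x_1\in M_c$; this is precisely $(1_{M_c}\smile f_\tau)$ evaluated on $\sigma_\Delta$, up to the sign bookkeeping in Definition \ref{cupalongpathsdefi}. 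Likewise, prepending $c$ before $\gamma$ leaves only the last summand $(-1)^{n+1}f_\tau(x_1\otimes\cdots\otimes x_n)x_{n+1}$ with $x_{n+1}\in M_c$, which is $(-1)^{n+1}(f_\tau\smile 1_{M_c})$. Summing over all arrows $c$ concatenable with $\gamma$ and using $1_M=\sum_{a\in Q_1}1_{M_a}$ (and the fact that non-concatenable cup products vanish by Definition \ref{cupalongpathsdefi}) gives $\nabla_n(f)=1_M\smile f+(-1)^{n+1}f\smile 1_M$. Third, for the degree-one statement: appending or prepending one arrow to a cycle $\gamma\in CQ_m$ produces a non-cycle path of length exactly $m+1$ (no loops, so $c\gamma$ and $\gamma c$ are genuinely non-cyclic of length $m+1$), whence $\nabla_n(f)\in\bigoplus_{\delta\in DQ_{m+1}}\HH^{n+1}_\delta(\Delta)$.

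\textbf{Main obstacle.} The routine-but-delicate part is the sign matching between the Hochschild differential $d$, the internal differential $\dd=d'$ of the complexes $\KK^\bullet_\omega$, and the sign $(-1)^{n'}$ built into the cup product via Definition \ref{cupalongpathsdefi}; one must check that the snake-lemma sign, the position of the extra arrow (front versus back of the cycle), and the Koszul sign in $f'\smile f$ all combine to produce exactly $1_M\smile f+(-1)^{n+1}f\smile 1_M$ and not some variant. I would handle this by first verifying the identity in low degree — say $n=1$, $\gamma$ a vertex, where $\nabla_1$ is the map $\HH\HH^1(A)\to\End_{A-A}M$ and the formula reduces to $[1_M,f]$ — to pin down the conventions, and then observe that the same cancellation pattern as in the standard proof that $d(f'\smile f)=d(f')\smile f+(-1)^{n'}f'\smile d(f)$ (Proposition \ref{cup}) propagates verbatim. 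A secondary point to note is that $(df)_{\DD}$ is automatically a $d$-cocycle in $\DD^\bullet$ because $d^2=0$ and its $\CC^\bullet$-component $d'f$ vanishes as $f$ was chosen to be a $d'$-cocycle; this is what makes $\nabla_n(f)$ well defined, and it should be stated explicitly.
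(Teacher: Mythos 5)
Your proposal is correct and follows essentially the same route as the paper: lift a $d'$-cocycle representative to $\JJ^n$, apply the full Hochschild coboundary, observe via Proposition \ref{preserve} and Remark \ref{notsub} that only the $\tau_1^+$ contributions survive in the $\DD^{n+1}$-component, and identify the appended-arrow and prepended-arrow terms with $1_M\smile f$ and $(-1)^{n+1}f\smile 1_M$ respectively, the degree-one claim following because adjoining an arrow to a cycle in a loop-free quiver yields a non-cycle of length $m+1$. Your explicit remarks on well-definedness and on checking signs in low degree are sound additions but do not change the argument.
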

\begin{proof} Let $\tau$ be a trajectory over $\gamma$ of duration $n$. We consider  a cocycle $f_\tau$ in $\CC^n= (\JJ/\DD)^n$. Note that $f_\tau$ has its image contained in $A_{x}$, where $x=s(\gamma)=t(\gamma)$. In order to compute $\nabla_n(f_\tau)$, we view $f_\tau$ as a cochain in $\JJ^n$, thus the Formula (\ref{d}) provides the equalities

$$df_\tau = \sum_{\sigma\in \tau^+} (df_\tau)_\sigma=  \sum_{\sigma\in \tau_1^+} (df_\tau)_\sigma .
$$
Recall that  $\tau_1^+$ is the set of the $n+1$-trajectories
$$t(c)^0, c,s(c)^{p_{m+1}},a_m,s(a_m)^{p_m},\cdots,s(a_2)^{p_2},a_1,s(a_1)^{p_1}$$
where $c\in Q_1$ is any arrow after $\gamma$, joint with the analogous set of trajectories obtained for any arrow $c$ before $\gamma$. Consequently
$$df_\tau= \sum_{c\in xQ_1}(1_{M_c}\smile f_\tau) + (-1)^{n+1} \sum_{c\in Q_1x}(f_\tau\smile 1_{M_c}).$$

We know that $1_M=\sum_{a\in Q_1}1_{M_a}$. Moreover, if $s(\gamma)\neq t(a)$, the map $1_{M_a}\smile f_\tau$ is zero already at the cochain level. These observations lead to the formula.\qed
\end{proof}

\begin{rema}
Let $A_y[M]$ be a one point extension corresponding to the quiver $x\cdot\longrightarrow\cdot y$, see Example \ref{onepoint}. For $n>0$, the connecting homomorphism $$\nabla_n: \HH\HH^n(A_y)\longrightarrow \Ext^{n}_{A_y}(M,M)$$  is as follows. If $f\in \HH\HH^n(A_y)$, then $$\nabla_n(f)= (-1)^{n+1}(f\smile 1_M).$$ Indeed, $1_M\smile f=0$ for no-concatenation reasons. Moreover, since $A_x=k$, the cohomology  $\HH\HH^\bullet (A_x)$ is concentrated in degree $0$, where its value is $k$.
\end{rema}

Through the previous remark, we end this section by linking our work to some of the results of \cite{GREENMARCOSSNASHALL}. In that  paper,  a $k$-algebra map is constructed and it is shown that it coincides with the connecting homomorphism of the long exact sequence. Consequently this map is the above $\nabla$.

It is straightforward to check that the family of maps $\nabla_n$ for one point extensions given by $\nabla_n(f)= (-1)^{n+1}(f\smile 1_M)$ is indeed a graded algebra map.

Another way of considering the same fact is as follows. Let $A$ be a $k$-algebra and let $M$ be a left $A$-module. It is well known that the Hochschild cohomology of the $A$-bimodule $\End_kM$ verifies  $$\HH^n(A, \End_kM)= \Ext^n_A(M,M).$$

Moreover, $\End_kM$ being an $A$-bimodule,  $\HH^\bullet(A, \End_kM)$ is a $\HH\HH^\bullet (A)$-graded bimodule. The point is that for this specific bimodule $\End_kM$  there is a canonical element
$$1_M\in\HH^0(A, \End_kM)= \End_AM.$$
In fact $\End_kM$ is more than an $A$-bimodule, it is also an $A$-module algebra for the composition, that is the structural product map is $A$-balanced and is an $A$-bimodule map. An immediate consequence of this is that  $\HH^n(A, \End_kM)$ is a $k$-algebra, which is canonically isomorphic to the algebra $\Ext^n_A(M,M)$ with the Yoneda product.

\begin{prop}
The morphism
$$\HH\HH^n(A)\longrightarrow \HH^n(A, \End_kM)= \Ext^n_A(M,M)$$
given by the (left or right) action  on the canonical element $1_M$ is a morphism of algebras.
\end{prop}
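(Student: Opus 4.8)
The plan is to identify the map of the statement with the pushforward $\rho_*$ along the structural $k$-algebra homomorphism $\rho\colon A\to\End_kM$, $\rho(a)(m)=am$, and then to observe that $\rho_*$ preserves the relevant products for purely formal reasons. Throughout I use, as recalled just above, that the $A$-bimodule structure making $\End_kM$ an $A$-module algebra is $a\cdot\phi\cdot b=\rho(a)\,\phi\,\rho(b)$, with internal multiplication the composition $\phi\circ\psi$, and that under this structure $\HH^\bullet(A,\End_kM)$ is, as a $k$-algebra, the algebra $\Ext^\bullet_A(M,M)$ with the Yoneda product.

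First I would record that $\rho$ is simultaneously a homomorphism of $k$-algebras, that is $\rho\circ m_A=\mu\circ(\rho\otimes\rho)$ where $m_A$ and $\mu$ denote the multiplications of $A$ and of $\End_kM$, and a morphism of $A$-bimodules; hence it induces a graded map $\rho_*\colon\HH\HH^\bullet(A)=\HH^\bullet(A,A)\to\HH^\bullet(A,\End_kM)$ on Hochschild cohomology.

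Next I would check at the level of (normalized) Hochschild cochains that the map of the statement is exactly $\rho_*$. If $f\colon A^{\otimes n}\to A$ is a cocycle, then the two cochains $1_M\smile f$ and $f\smile 1_M$ of $\Hom_k(A^{\otimes n},\End_kM)$ both send $(a_1,\dots,a_n)$ to $\rho\bigl(f(a_1,\dots,a_n)\bigr)$, because $1_M\cdot f(a_1,\dots,a_n)=1_M\circ\rho(f(a_1,\dots,a_n))$ and $f(a_1,\dots,a_n)\cdot 1_M=\rho(f(a_1,\dots,a_n))\circ 1_M$ are both equal to $\rho(f(a_1,\dots,a_n))$. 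Since $1_M$ has degree $0$ no Koszul sign appears, so the ``left'' and the ``right'' version of the map coincide and equal $\rho_*$.

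Finally I would argue that $\rho_*$ is a morphism of graded algebras: the cup product on $\HH^\bullet(A,A)$ is induced by $m_A$ and that on $\HH^\bullet(A,\End_kM)$ by $\mu$, so $\rho\circ m_A=\mu\circ(\rho\otimes\rho)$ gives, on cochains, $\rho\circ(f\smile g)=(\rho\circ f)\smile(\rho\circ g)$, i.e.\ $\rho_*(f\smile g)=\rho_*(f)\smile\rho_*(g)$; and $\rho_*$ carries the unit $1\in\HH\HH^0(A)=Z(A)$ to $\rho(1_A)=1_M$, the unit of $\Ext^\bullet_A(M,M)$. Passing to cohomology proves the claim. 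The only delicate point is the chain-level bookkeeping needed to match the three cup products that occur here --- the internal product of $\HH\HH^\bullet(A)$, its bimodule action on $\HH^\bullet(A,\End_kM)$, and the internal product of $\HH^\bullet(A,\End_kM)$ --- but these are all instances of the usual associative cup-product formalism and, because $1_M$ sits in degree zero, every sign ambiguity disappears, so there is no genuine obstacle.
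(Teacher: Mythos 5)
Your argument is correct and is exactly the route the paper intends: the paper states this proposition without a written proof, as an immediate consequence of the preceding observation that $\End_kM$ is an $A$-module algebra via the structure map $\rho$, and your identification of left/right cup with $1_M$ as $\rho_*$ together with $\rho\circ m_A=\mu\circ(\rho\otimes\rho)$ is precisely the chain-level verification being left to the reader. Nothing is missing.
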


\section{\sf Square projective algebras}\label{square}

In this section we focus on null-square projective algebras, see for instance \cite{CIBILSREDONDOSOLOTAR2017}.

A \emph{square algebra} is a $k$-algebra arising from a $k$-category built on the following:
\begin{itemize}
\item the round trip quiver $$Q=\ {}_x\cdot \rightleftarrows \cdot_{y}, $$ where the arrow from $x$ to $y$ is denoted $a$ and the reverse one is denoted $b$,
\item
a $Q$-set $(\AA,\MM)$  given by
\begin{itemize}
  \item the algebras $A_x=A$, $A_y=B$ and the \emph{corner bimodules} $M_a=M$ and $M_b=N$, hence $\AA=A\times B$ and $\MM= M\oplus N$,
  \item the bimodule maps
$$\alpha : N\otimes_B M\to A \mbox{ \ and \ } \beta : M\otimes_A N \to B$$
verifying \emph{associativity constraints}, that is the following diagrams commute:
\small
 \begin{equation}\label{assoc}
\xymatrix@!C{
M \otimes_A N \otimes_B M    \ar@{->}[d]_{\beta\otimes 1}      \ar[r]^-{1\otimes \alpha}       &        M\otimes_A A        \ar@{->}[d] \\
{}B\otimes_B M              \ar[r]               &    M}
\xymatrix@!C{
N \otimes_B M \otimes_A N    \ar@{->}[d]_{\alpha\otimes 1}      \ar[r]^-{1\otimes \beta}       &        N\otimes_B B        \ar@{->}[d] \\
{}A\otimes_A N              \ar[r]               &    N}
\end{equation}
\end{itemize}
\end{itemize}
\normalsize
The square algebra is   $\left(
                                                                                                          \begin{array}{cc}
                                                                                                            A & N \\
                                                                                                            M & B \\
                                                                                                          \end{array}
                                                                                                        \right)$
 with matrix multiplication such that if $m\in M$ and $n\in N$, then
$$\left(
 \begin{array}{cc}
 0 & 0 \\
 m& 0 \\
  \end{array}
  \right)
  \left(
 \begin{array}{cc}
 0 & n \\
 0& 0 \\
  \end{array}
  \right)=
  \left(
 \begin{array}{cc}
 0 & 0 \\
 0& \beta(m\otimes n) \\
  \end{array}
  \right)
    \mbox{\  and}$$ $$
  \left(
 \begin{array}{cc}
 0 & n \\
 0& 0 \\
  \end{array}
  \right)
  \left(
 \begin{array}{cc}
 0 & 0 \\
 m& 0 \\
  \end{array}
  \right)=
  \left(
 \begin{array}{cc}
 \alpha(n\otimes m) & 0 \\
 0& 0  \\
  \end{array}
  \right).$$

We note that a $k$-algebra $\Lambda$ with a chosen idempotent $e$ provides a square algebra
$\Lambda = \left(
             \begin{array}{cc}
               e\Lambda e & e\Lambda f \\
               f\Lambda e & f\Lambda f \\
             \end{array}
           \right)$, where $f=1-e$.

\begin{defi}
\
\begin{itemize}
\item A \emph{square projective algebra} is a square algebra such that $\MM$ is a projective $\AA$-bimodule, or, equivalently, the corner bimodules are projective.
\item A \emph{null-square algebra} is a square algebra such that $\alpha=\beta=0$.
  \item A \emph{null-square projective algebra} is a square algebra verifying both previous requirements.
 \end{itemize}
\end{defi}

\begin{prop}
Let $\Lambda$ be a null-square projective algebra. For $m>0$, there is a five-term exact sequence as follows:

$$0\longrightarrow\HH\HH^{2m}(\Lambda)\longrightarrow\HH\HH^{2m}(\AA)\oplus\Hom_{\AA-\AA}\left(\MM^{\otimes_\AA 2m}, \AA\right)\stackrel{\nabla_{2m}}\longrightarrow$$
$$\Hom_{\AA-\AA}\left(\MM^{\otimes_\AA {2m+1}}, \MM\right)\longrightarrow \HH\HH^{2m+1}(\Lambda)\longrightarrow\HH\HH^{2m+1}(\AA)\longrightarrow 0.$$
For $m=0$ the exact sequence:
$$0\longrightarrow \HH\HH^0(\Lambda)\longrightarrow \HH\HH^0(\AA)\stackrel{\nabla_0}{\longrightarrow} \End_{\AA-\AA}\MM \longrightarrow \HH\HH^1(\Lambda) \longrightarrow \HH\HH^1(\AA)\longrightarrow 0.$$
\end{prop}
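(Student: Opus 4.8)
The strategy is to apply the general cohomology long exact sequence of Theorem~\ref{les} to the specific quiver $Q = {}_x\cdot\rightleftarrows\cdot_y$, together with the Tor-vanishing consequences of Corollary~\ref{pro}, and then show that the connecting map $\nabla$ forces the long exact sequence to break into the claimed five-term pieces. First I would record the combinatorics of the round trip quiver: its cycles through $x$ are the paths $(ba)^j$, through $y$ the paths $(ab)^j$, while a non-cycle of length $m\geq 1$ is either $a(ba)^j$ or $b(ab)^j$; note every non-cycle here is \emph{parallel to an arrow}, so $\Delta_\delta\neq 0$ for all relevant $\delta$. Since $\MM$ is $\AA$-bimodule projective, Corollary~\ref{pro} applies and gives, in each degree $n$, that the ``non-cycle side'' of the long exact sequence is $\Hom_{\AA-\AA}\big((\MM^{\otimes_\AA n})_D,\MM\big)$ and the ``cycle side'' is $\HH\HH^n(\AA)\oplus\Hom_{\AA-\AA}\big((\MM^{\otimes_\AA n})_C,\AA\big)$. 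Here I would use that $\MM = M\oplus N$ with $M$, $N$ placed on opposite corners, so that $(\MM^{\otimes_\AA n})_C$ (tensor powers returning to a diagonal algebra) is nonzero exactly for $n$ \emph{even}, and $(\MM^{\otimes_\AA n})_D$ (tensor powers landing on a corner) is nonzero exactly for $n$ \emph{odd}; in those nonzero cases the whole tensor power is concentrated there, i.e. $(\MM^{\otimes_\AA 2m})_C = \MM^{\otimes_\AA 2m}$ and $(\MM^{\otimes_\AA 2m+1})_D = \MM^{\otimes_\AA 2m+1}$.

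With this parity observation the long exact sequence of Corollary~\ref{pro} becomes, reading off degrees, a sequence in which the $\HH^n$ of non-cycles vanishes for $n$ even and the $\Hom_{\AA-\AA}((\MM^{\otimes}),\AA)$ summand of the cycle side vanishes for $n$ odd. Writing out four consecutive terms starting at an even degree $2m$, the segment
$$\cdots \to \Hom_{\AA-\AA}\big((\MM^{\otimes_\AA 2m})_D,\MM\big)\to \HH\HH^{2m}(\Lambda)\to \HH\HH^{2m}(\AA)\oplus\Hom_{\AA-\AA}\big(\MM^{\otimes_\AA 2m},\AA\big)\xrightarrow{\nabla_{2m}} \cdots$$
has its incoming term $\Hom_{\AA-\AA}\big((\MM^{\otimes_\AA 2m})_D,\MM\big)=0$ for $m>0$ (the tensor power of even length $2m$ contributes nothing on the non-cycle side), so the map $\HH\HH^{2m}(\Lambda)\to \HH\HH^{2m}(\AA)\oplus\Hom_{\AA-\AA}(\MM^{\otimes_\AA 2m},\AA)$ is injective. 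Dually, the term one step past $\nabla_{2m}$, namely the non-cycle cohomology in degree $2m+1$, equals $\Hom_{\AA-\AA}\big(\MM^{\otimes_\AA 2m+1},\MM\big)$, and the term after $\HH\HH^{2m+1}(\AA)$ on the cycle side in degree $2m+2$ is again a non-cycle term in odd degree $2m+1$... more precisely, the term following $\HH\HH^{2m+1}(\Lambda)\to\HH\HH^{2m+1}(\AA)\oplus(\text{cycle }\Hom)$ is the non-cycle cohomology in degree $2m+2$, which vanishes; hence $\HH\HH^{2m+1}(\Lambda)\to\HH\HH^{2m+1}(\AA)$ is surjective, and the cycle $\Hom$ summand in odd degree $2m+1$ is zero. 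Splicing these four vanishings into the long exact sequence produces exactly the displayed five-term exact sequence for $m>0$; the $m=0$ case is read off the bottom rows of the long exact sequence of Theorem~\ref{les} directly, using $\HH^0(\DD^\bullet)=0$ and $\HH^1(\DD^\bullet)=\End_{\AA-\AA}\MM$.

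The only genuinely non-formal point is the parity bookkeeping: one must verify that for $Q$ the round trip quiver the set $CQ_m$ is empty for $m$ odd and $DQ_m$ is empty for $m$ even (for $m>0$), and that the corresponding bimodule summands $(\MM^{\otimes_\AA m})_C$ and $(\MM^{\otimes_\AA m})_D$ behave as claimed under the splitting~(\ref{Mm split}). This is elementary — a path in $Q$ alternates between $a$ and $b$, so its length and the relation $s=t$ are linked by parity — but it is the crux that makes the long sequence degenerate. I would also note that the $\nabla_{2m}$ appearing in the five-term sequence is exactly the restriction of the connecting map of Theorem~\ref{nabla}, since by that theorem $\nabla$ raises path-length by one and hence sends the even-length (cycle) part in degree $2m$ into the odd-length (non-cycle) part in degree $2m+1$; no further identification of $\nabla$ is needed for this proposition. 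Everything else — exactness of the spliced segment, the identification of the terms via Corollary~\ref{pro} — is immediate from the results already established.
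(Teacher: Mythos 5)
Your proposal is correct and follows essentially the same route as the paper: observe that in the round trip quiver the cycles are exactly the even-length paths, so $(\MM^{\otimes_\AA n})_C=0$ for $n$ odd and $(\MM^{\otimes_\AA n})_D=0$ for $n>0$ even, then apply Corollary \ref{pro} (valid since $\MM$ is projective) and note that these vanishings break the long exact sequence into the stated five-term pieces. Your write-up merely spells out the splicing in more detail than the paper does.
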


\begin{proof}
A null-square algebra is built on the round trip quiver $$Q=\cdot \rightleftarrows \cdot$$ where  cycles are precisely the paths of even length.
 If $n$ is odd, then $(\MM^{\otimes_\AA n})_C=0$, and  $(\MM^{\otimes_\AA n})_D= \MM^{\otimes_\AA n}$, while if $n>0$ is even, then $(\MM^{\otimes_\AA n})_D=0$ and $(\MM^{\otimes_\AA n})_C= \MM^{\otimes_\AA n}$. Corollary \ref{pro} applies since $\MM$ is a projective bimodule. The previous observations show that the cohomology long exact sequence of Corollary \ref{pro} splits into five-term exact sequences. \qed
\end{proof}

Let $\nabla'_{2m}: \Hom_{\AA-\AA}\left(\MM^{\otimes_\AA 2m}, \AA\right)\longrightarrow
\Hom_{\AA-\AA}\left(\MM^{\otimes_\AA {2m+1}}, \MM\right)$
 be the restriction of $\nabla_{2m}$ to the second direct summand in the above five-term exact sequence.

\begin{theo}\label{HHnullssquare}
Let $\Lambda$ be a null-square projective algebra as before. For $m>0$, the following holds:
\begin{itemize}
\item
$\HH\HH^{2m}(\Lambda)= \HH^{2m}(\AA)\oplus \Ker \nabla'_{2m}.$
\item
There is a short exact sequence
$$0\to \Coker\nabla'_{2m} \to \HH^{2m+1}(\Lambda)\to \HH^{2m+1}(\AA)\to 0.$$
\end{itemize}
\end{theo}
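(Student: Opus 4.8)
The plan is to deduce both statements directly from the five-term exact sequences of the previous proposition, once we establish the key fact that the connecting homomorphism $\nabla_{2m}$ annihilates the summand $\HH\HH^{2m}(\AA)$ of its source. Granting this, the five-term sequence for $m>0$ identifies $\HH\HH^{2m}(\Lambda)$ with $\Ker\nabla_{2m}$; since $\nabla'_{2m}$ is by definition the restriction of $\nabla_{2m}$ to the summand $\Hom_{\AA-\AA}(\MM^{\otimes_\AA 2m},\AA)$, and $\nabla_{2m}$ kills the other summand, linearity gives $\Ker\nabla_{2m}=\HH\HH^{2m}(\AA)\oplus\Ker\nabla'_{2m}$, which is the first assertion. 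For the second, vanishing on the diagonal summand forces $\Im\nabla_{2m}=\Im\nabla'_{2m}$, hence $\Coker\nabla_{2m}=\Coker\nabla'_{2m}$, and the tail of the five-term sequence becomes $0\to\Coker\nabla'_{2m}\to\HH\HH^{2m+1}(\Lambda)\to\HH\HH^{2m+1}(\AA)\to 0$.

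So the real content is the vanishing $\nabla_{2m}|_{\HH\HH^{2m}(\AA)}=0$, and I would prove it using Theorem \ref{nabla}. Write $Q=\,{}_x\cdot\rightleftarrows\cdot_y$ with arrows $a\colon x\to y$ and $b\colon y\to x$, so that $\HH\HH^{2m}(\AA)=\HH^{2m}_x(\Delta)\oplus\HH^{2m}_y(\Delta)$ is the cohomology along the two vertices, viewed as cycles of length $0$. By Theorem \ref{nabla} the connecting homomorphism raises the length of the underlying path by exactly one, so for $f\in\HH^{2m}_x(\Delta)$ the class $\nabla_{2m}(f)$ lies in $\bigoplus_{\delta\in DQ_1}\HH^{2m+1}_\delta(\Delta)=\HH^{2m+1}_a(\Delta)\oplus\HH^{2m+1}_b(\Delta)$, and symmetrically for $f\in\HH^{2m}_y(\Delta)$. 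By Theorem \ref{cohoalongarrow} these are $\Ext^{2m}_{A_{t(a)}-A_{s(a)}}(M_a,M_a)$ and $\Ext^{2m}_{A_{t(b)}-A_{s(b)}}(M_b,M_b)$; as $\Lambda$ is null-square \emph{projective} the corner bimodules $M_a$ and $M_b$ are projective, and since $2m>0$ both $\Ext$ groups vanish. Therefore $\nabla_{2m}(f)=0$ for every $f\in\HH\HH^{2m}(\AA)$, which is what we needed.

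I expect the only point requiring care to be this last vanishing, together with the bookkeeping of which $\HH^{2m+1}_\delta(\Delta)$ can receive the image of $\nabla_{2m}$: a priori $\nabla_{2m}(f)$ for diagonal $f$ could meet the components indexed by non-cycles of any length up to $2m+1$, and it is precisely the degree-$+1$ property of Theorem \ref{nabla} that confines it to arrows, where projectivity kills it. (The same projectivity, via Corollary \ref{pro}, is what makes $\Hom_{\AA-\AA}(\MM^{\otimes_\AA 2m+1},\MM)$ the entire non-cycle part in degree $2m+1$, since the higher-$\Ext$ contributions of shorter non-cycles vanish because each $M_\delta$ is a projective $\AA$-bimodule.) Everything else is a formal consequence of the five-term exact sequence and these two theorems, so I would not anticipate any genuine obstacle.
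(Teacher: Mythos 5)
Your proposal is correct and follows essentially the same route as the paper: both reduce everything to the vanishing of $\nabla_{2m}$ on the summand $\HH\HH^{2m}(\AA)$, proved by combining the degree-$+1$ property of Theorem \ref{nabla} with the identification $\HH^{2m+1}_a(\Delta)\oplus\HH^{2m+1}_b(\Delta)=\Ext^{2m}_{\AA-\AA}(\MM,\MM)=0$ from Theorem \ref{cohoalongarrow} and projectivity of $\MM$. The remaining deductions from the five-term exact sequence match the paper's argument exactly.
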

\begin{proof}
We assert that $\nabla_{2m}$ restricted to $\HH\HH^{2m}(\AA)$ is zero, for $m>0$. Indeed, we know that $\nabla_{2m}$ is of degree $1$ with respect to the length of paths, see Theorem \ref{nabla}. Hence, by Theorem \ref{cohoalongarrow}
$$\nabla_{2m}(\HH\HH^{2m}(\AA))\ \subset \ \HH\HH^{2m+1}_a(\Delta)\oplus \HH\HH^{2m+1}_b(\Delta)=\Ext^{2m}_{\AA-\AA}(\MM,\MM)=0$$
since $\MM$ is projective. Consequently,  $\Ker \nabla_{2m}= \HH^{2m}(\AA)\oplus \Ker \nabla'_{2m}$. This provides the required decomposition of $\HH\HH^{2m}(\Lambda)$ since the first map of the previous five-term exact sequence is injective.

Moreover $\Coker \nabla_{2m} = \Coker \nabla'_{2m}$, and the exact sequence of the statement follows. \qed

\end{proof}

\begin{coro}\label{prodofcoho}
Let $\Lambda=\left(
                                                                                                          \begin{array}{cc}
                                                                                                            A & N \\
                                                                                                            M & B \\
                                                                                                          \end{array}
                                                                                                        \right)$ be a null-square projective algebra.

\begin{itemize}
\item
If $n>0$ and $\Lambda$ is finite dimensional,  then $$\dim\HH\HH^n(\Lambda)\geq \dim\HH\HH^n(A)+\dim\HH\HH^n(B).$$
\item
If there is a positive integer $h$ such that $\MM^{\otimes_\AA h}=0$, then for $n\geq h$ $$\HH\HH^n(\Lambda) = \HH\HH^n(A)\oplus\HH\HH^n(B).$$
\end{itemize}
\end{coro}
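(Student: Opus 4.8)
The plan is to deduce both statements directly from Theorem \ref{HHnullssquare}, so the only work is to analyze the operator $\nabla'_{2m}$ in the two special regimes. For the first statement, observe that Theorem \ref{HHnullssquare} already gives $\HH\HH^{2m}(\Lambda)= \HH^{2m}(\AA)\oplus \Ker \nabla'_{2m}$ and a short exact sequence $0\to \Coker\nabla'_{2m} \to \HH^{2m+1}(\Lambda)\to \HH^{2m+1}(\AA)\to 0$. Since $\HH\HH^n(\AA)=\HH\HH^n(A)\oplus\HH\HH^n(B)$ for all $n$ (the algebra $\AA=A\times B$ is a product, and Hochschild cohomology of a product is the product of the cohomologies), and since a short exact sequence of finite dimensional vector spaces is additive on dimensions, in even degree $\dim\HH\HH^{2m}(\Lambda)=\dim\HH\HH^{2m}(\AA)+\dim\Ker\nabla'_{2m}\geq \dim\HH\HH^{2m}(A)+\dim\HH\HH^{2m}(B)$, and in odd degree $\dim\HH\HH^{2m+1}(\Lambda)=\dim\HH\HH^{2m+1}(\AA)+\dim\Coker\nabla'_{2m}\geq \dim\HH\HH^{2m+1}(A)+\dim\HH\HH^{2m+1}(B)$. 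This handles all $n>0$ by splitting into parities.

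For the second statement, the key remark is that $\MM^{\otimes_\AA h}=0$ forces $\MM^{\otimes_\AA n}=0$ for every $n\geq h$, because $\MM^{\otimes_\AA n}$ is a quotient-cum-submodule built by tensoring further copies of $\MM$ onto $\MM^{\otimes_\AA h}$ (concretely $\MM^{\otimes_\AA n}\cong \MM^{\otimes_\AA(n-h)}\otimes_\AA \MM^{\otimes_\AA h}=0$). Now fix $n\geq h$. If $n=2m$ is even with $2m\geq h$, then $\nabla'_{2m}$ has source $\Hom_{\AA-\AA}(\MM^{\otimes_\AA 2m},\AA)=\Hom_{\AA-\AA}(0,\AA)=0$, hence $\Ker\nabla'_{2m}=0$ and $\HH\HH^{2m}(\Lambda)=\HH^{2m}(\AA)=\HH\HH^{2m}(A)\oplus\HH\HH^{2m}(B)$. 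If $n=2m+1$ is odd with $2m+1\geq h$, I need $\Coker\nabla'_{2m}=0$; here the target of $\nabla'_{2m}$ is $\Hom_{\AA-\AA}(\MM^{\otimes_\AA (2m+1)},\MM)$, and since $2m+1\geq h$ this is $\Hom_{\AA-\AA}(0,\MM)=0$, so the cokernel is automatically zero and the short exact sequence gives $\HH\HH^{2m+1}(\Lambda)\cong\HH^{2m+1}(\AA)=\HH\HH^{2m+1}(A)\oplus\HH\HH^{2m+1}(B)$. Thus for all $n\geq h$, $\HH\HH^n(\Lambda)=\HH\HH^n(A)\oplus\HH\HH^n(B)$, as claimed.

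The only mild subtlety — and the step I would be most careful about — is the bookkeeping on parities near the threshold $h$: one must check that whichever parity $n$ has, the relevant $\Hom$ space ($\Hom_{\AA-\AA}(\MM^{\otimes_\AA n},\AA)$ when $n$ even, $\Hom_{\AA-\AA}(\MM^{\otimes_\AA (n)},\MM)$ sitting as the target when $n$ odd) vanishes once $n\geq h$; this is immediate from $\MM^{\otimes_\AA n}=0$, but it is worth spelling out that in the odd case it is the \emph{target} of $\nabla'_{n-1}$ that must vanish, with exponent $n$, whereas in the even case it is the \emph{source} of $\nabla'_{n}$, with exponent $n$ again — so in both cases the controlling exponent is exactly $n$ and the hypothesis $n\geq h$ suffices. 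No Tor-vanishing hypothesis beyond projectivity of $\MM$ is needed, since Corollary \ref{pro} (used already in the proof of Theorem \ref{HHnullssquare}) covers the projective case unconditionally.
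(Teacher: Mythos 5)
Your proof is correct and follows essentially the same route as the paper: both deduce everything from Theorem \ref{HHnullssquare} together with the product decomposition $\HH\HH^n(\AA)=\HH\HH^n(A)\oplus\HH\HH^n(B)$, and your version simply spells out the parity bookkeeping and the vanishing $\MM^{\otimes_\AA n}=0$ for $n\geq h$ that the paper's one-line proof leaves implicit. The only point worth flagging is that for $n=1$ Theorem \ref{HHnullssquare} (stated for $m>0$) does not apply directly, so the inequality in that degree should instead be read off from the surjectivity of $\HH\HH^1(\Lambda)\to\HH\HH^1(\AA)$ in the $m=0$ five-term sequence --- a gap equally present in the paper's proof.
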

\begin{proof}
Since $\AA=A\times B$, we have $\HH\HH^n(\AA)=\HH\HH^n(A)\oplus\HH\HH^n(B).$ The previous theorem provides the results.\qed
\end{proof}

Next we will consider square projective algebras where the corner bimodules are free bimodules of rank one, that is $M=BA$ and $N=AB$ - recall that we drop tensor product symbols over $k$.  We will first prove that the bimodule morphisms $\alpha$ and $\beta$  are necessarily zero, except if $A=B=k$. Next we will show that   $\nabla'_{2m}$ is injective for $m>0$.

\begin{theo}\label{freerankonezero}
Let $\Lambda=\left(
                                                                                                          \begin{array}{cc}
                                                                                                            A &  AB\\
                                                                                                           BA  & B \\
                                                                                                          \end{array}
                                                                                                        \right)$
                                                                                                        be a square algebra where the corner bimodules are free of rank one. The algebra $\Lambda$ is a null-square algebra, except if $A=B=k$.
\end{theo}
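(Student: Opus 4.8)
The plan is to make the associativity constraints (\ref{assoc}) completely explicit once the corner bimodules $M=BA$ and $N=AB$ are free of rank one, and to read off that $\alpha$ and $\beta$ must vanish unless both $A$ and $B$ collapse to $k$. First I would fix normal forms: write $M=B\otimes_k A$ with outer actions $b\cdot(b'\otimes a')\cdot a=bb'\otimes a'a$, and $N=A\otimes_k B$ with $a\cdot(a'\otimes b')\cdot b=aa'\otimes b'b$. The multiplication of $B$ then gives an isomorphism of $A$-bimodules for the outer $A$-actions
$$N\otimes_B M\;\cong\;A\otimes_k B\otimes_k A,\qquad (a\otimes b)\otimes(b'\otimes a')\;\mapsto\;a\otimes bb'\otimes a',$$
and symmetrically $M\otimes_A N\cong B\otimes_k A\otimes_k B$ as $B$-bimodules. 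Since $A\otimes_k B\otimes_k A$ is a free $A$-bimodule, $\alpha$ is determined by a $k$-linear map $\alpha_0\colon B\to A$, $\alpha_0(b)=\alpha(1_A\otimes b\otimes 1_A)$, through $\alpha(a\otimes b\otimes a')=a\,\alpha_0(b)\,a'$; likewise $\beta$ is determined by $\beta_0\colon A\to B$, $\beta(b\otimes a\otimes b')=b\,\beta_0(a)\,b'$. In particular $\alpha=0$ iff $\alpha_0=0$ and $\beta=0$ iff $\beta_0=0$, so it suffices to prove $\alpha_0=0=\beta_0$ whenever we are not in the case $A=B=k$.

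Next I would unwind the left-hand square of (\ref{assoc}): through the canonical identifications $M\otimes_A A=M=B\otimes_B M$ it says $m\cdot\alpha(n\otimes m')=\beta(m\otimes n)\cdot m'$ in $M$. Evaluating on $m=b_0\otimes a_0$, $n=a_1\otimes b_1$, $m'=b_2\otimes a_2$, the two sides become $b_0\otimes a_0a_1\,\alpha_0(b_1b_2)\,a_2$ and $b_0\,\beta_0(a_0a_1)\,b_1b_2\otimes a_2$ in $B\otimes_k A$, and specializing $a_0=a$, $a_1=a_2=1_A$, $b_1=b$, $b_0=b_2=1_B$ yields the master relation
$$1_B\otimes a\,\alpha_0(b)\;=\;\beta_0(a)\,b\otimes 1_A\qquad\text{in }B\otimes_k A,\ \text{for all }a\in A,\ b\in B.$$
I would then use the elementary fact that, extending $1_B$ and $1_A$ to $k$-bases, the subspaces $1_B\otimes A$ and $B\otimes 1_A$ of $B\otimes_k A$ meet exactly in $k(1_B\otimes 1_A)$, and that $z\mapsto z\otimes 1_A$ and $w\mapsto 1_B\otimes w$ are injective. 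Putting $a=1_A$, respectively $b=1_B$, in the master relation forces $\alpha_0(b)\in k1_A$ and $\beta_0(a)\in k1_B$; writing $\alpha_0(b)=\phi(b)1_A$, $\beta_0(a)=\psi(a)1_B$ with $k$-linear $\phi\colon B\to k$, $\psi\colon A\to k$, the master relation becomes $\phi(b)(1_B\otimes a)=\psi(a)(b\otimes 1_A)$.

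A short case split finishes the argument. If $\psi=0$ and $\phi\neq 0$, choosing $b$ with $\phi(b)\neq 0$ gives $1_B\otimes a=0$ for all $a$, contradicting $1_B\otimes 1_A\neq 0$; hence $\phi=0$ iff $\psi=0$, and in that case $\alpha=\beta=0$. If both $\phi$ and $\psi$ are nonzero, fix $b_0$ with $\phi(b_0)\neq 0$: evaluating at $a=1_A$ gives $1_B\otimes 1_A=\bigl(\tfrac{\psi(1_A)}{\phi(b_0)}b_0\bigr)\otimes 1_A$, so $b_0=\tfrac{\phi(b_0)}{\psi(1_A)}1_B$ and $\psi(1_A)\neq 0$; then for every $a$, $1_B\otimes a=\tfrac{\psi(a)}{\psi(1_A)}(1_B\otimes 1_A)$, whence $a=\tfrac{\psi(a)}{\psi(1_A)}1_A$ and $A=k1_A\cong k$. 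Feeding $A=k$ back (so $B\otimes_k A\cong B$), the relation collapses to $\phi(b)1_B=\psi(1_A)b$, which forces $B=k1_B\cong k$. Thus $\alpha$ or $\beta$ nonzero implies $A=B=k$, the contrapositive of the statement; one could instead obtain this from the right-hand square of (\ref{assoc}) by symmetry, but it is unnecessary. The only real difficulty is bookkeeping — keeping the left and right actions straight across the two tensor identifications, and making sure that cancelling a $1_A$ or a $1_B$ from a pure tensor is legitimate — after which the conclusion is the elementary case analysis above.
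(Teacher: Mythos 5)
Your proof is correct and follows essentially the same route as the paper's: both reduce $\alpha$ and $\beta$ via freeness to $k$-linear maps $B\to A$ and $A\to B$, extract the same relation $1\otimes a\,\alpha_0(b)=\beta_0(a)\,b\otimes 1$ in $B\otimes_k A$ from the associativity constraint, and exploit the splitting off of $k\cdot 1$ in $A$ and $B$ to force either vanishing or $A=B=k$. The only differences are organizational: you first show $\alpha_0,\beta_0$ land in $k\cdot 1$ and case-split on the resulting functionals $\phi,\psi$, whereas the paper pins down $\overline{\alpha}(1)=\overline{\beta}(1)=\lambda$ and splits on $\lambda$; and you correctly observe that the second associativity square is not needed, while the paper invokes it for the symmetric conclusion $B=k$.
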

\begin{proof}
Let $\alpha : N\otimes_B M\to A$ and  $\beta : M\otimes_A N \to B$ be the bimodule maps of the set verifying the associativity constraints (\ref{assoc}) for $M=BA$ and $N=AB$. We will prove that $\alpha=\beta=0$.
Notice that
\small
$$\Hom_{A-A}(N\otimes_B M, A)=\Hom_{A-A}(AB\otimes_B BA, A)= \Hom_{A-A} (ABA, A)= \Hom_k(B,A).$$
\normalsize
Let $\overline{\alpha}\in \Hom_k(B,A)$ be the linear map corresponding to $\alpha$ through the composition of the previous canonical isomorphisms. Similarly, let $\overline{\beta}\in\Hom_k(A,B)$ be the linear map corresponding to $\beta$. A simple computation shows that the associativity constraints (\ref{assoc}) are equivalent to the following  for every $a\in A$ and $b\in B$:
\begin{eqnarray}
 \label{associativeone} 1\otimes a\overline{\alpha}(b) &=& \overline{\beta}(a)b\otimes 1 \ \in BA \\
 \label{associativetwo}1\otimes b\overline{\beta}(a) &=&  \overline{\alpha}(b)a\otimes 1 \ \in AB.
\end{eqnarray}
Let $A'$ and $B'$ be vector subspaces of $A$ and $B$, such that $A=k\oplus A'$ and $B=k\oplus B'$. For $a\in A$ and $b\in B$, let $a=a_1+a'$ and $b=b_1+b'$ be the corresponding decompositions.

The equality (\ref{associativeone}) for $a=b=1$ gives $1\otimes \overline{\alpha}(1)= \overline{\beta}(1)\otimes 1$, then
$$1\otimes \overline{\alpha}(1)_1 + 1\otimes \overline{\alpha}(1)'= \overline{\beta}(1)_1\otimes 1 + \overline{\beta}(1)'\otimes 1.$$
The tensors $1\otimes \overline{\alpha}(1)_1$ and $\overline{\beta}(1)_1\otimes 1$ are both in $k\otimes_k k$, while $1\otimes \overline{\alpha}(1)'\in k\otimes_k A'$ and $\overline{\beta}(1)'\otimes 1\in B'\otimes_k k$ belong to different direct summands of $BA$, which implies that they are both zero. Moreover $\overline{\alpha}(1)'=0=\overline{\beta}(1)'.$
Consequently there is $\lambda\in k$ such that
$$\overline{\alpha}(1)=\overline{\alpha}(1)_1=\lambda=\overline{\beta}(1)_1= \overline{\beta}(1).$$

For all $a\in A$ and $b=1$, the equality (\ref{associativeone})  gives $1\otimes a\overline{\alpha}(1)=\overline{\beta}(a)\otimes 1$, hence
\begin{equation}\label{adelante}
1\otimes \lambda a = \overline{\beta}(a)\otimes 1
\end{equation}
that is
$$1\otimes \lambda a_1 + 1\otimes \lambda a' = \overline{\beta}(a)_1\otimes 1 + \overline{\beta}(a)'\otimes 1.$$
If $\lambda\neq 0$, then $a'=0$ for every $a\in A$; the same computation for the other associativity constraints provides $b'=0$ for every $b\in B$, that is $A=B=k$.
If $\lambda=0$ we infer from (\ref{adelante}) that $\overline{\beta}=0$; similarly $\overline{\alpha}=0$.\qed

\end{proof}

\begin{prop}\label{injective}
Let $\Lambda$ be as in Theorem \ref{freerankonezero}. The morphism $\nabla'_{2m}$ is injective for $m>0$, except if $A=k$ and $B=k$.
\end{prop}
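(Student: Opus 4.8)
The plan is to read off the connecting homomorphism from its cup-product description (Theorem \ref{nabla}) and to exploit that the corner bimodules, being free of rank one, carry canonical generators. By Theorem \ref{freerankonezero} the algebra $\Lambda$ is a null-square algebra, and a free bimodule is projective, so $\Lambda$ is null-square projective; hence the five-term exact sequences proved above and the identifications of Theorem \ref{HHnullssquare} are in force. In the round trip quiver there are exactly two cycles of each positive even length $2m$, namely $\gamma_x=(ba)^m$ based at $x$ and $\gamma_y=(ab)^m$ based at $y$, and exactly two non-cycles of length $2m+1$: a path $\delta$ from $x$ to $y$, parallel to $a$, and a path $\delta'$ from $y$ to $x$, parallel to $b$. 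Accordingly the source of $\nabla'_{2m}$ splits as $\Hom_{A-A}(M_{\gamma_x},A)\oplus\Hom_{B-B}(M_{\gamma_y},B)$ and its target as $\Hom_{B-A}(M_\delta,M)\oplus\Hom_{A-B}(M_{\delta'},N)$; moreover $\delta$ can be cut as an arrow followed by a cycle in two ways, giving $M_\delta\cong M\otimes_A M_{\gamma_x}\cong M_{\gamma_y}\otimes_B M$, and likewise $M_{\delta'}\cong M_{\gamma_x}\otimes_A N\cong N\otimes_B M_{\gamma_y}$.

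First I would compute the matrix of $\nabla'_{2m}$ in these coordinates. By Theorem \ref{nabla}, $\nabla_{2m}(h)=1_\MM\smile h-h\smile 1_\MM$ with $1_\MM=1_M+1_N$; cup products of cocycles along non-concatenable paths vanish, so for a cocycle along $\gamma_x$ (resp. $\gamma_y$) only one summand of $1_\MM$ contributes on each side, and tracking its source and target shows that the images land exactly in the two summands listed above. Since the cup product with $1_M$, resp. $1_N$, at minimal degree is just tensoring with the identity of $M$, resp. $N$, over the pivot algebra --- this follows from Definition \ref{cupalongpathsdefi} together with the identification $\HH^{2m}_\gamma(\Delta)=\Hom_{\AA-\AA}(M_\gamma,\Delta_\gamma)$ of Theorem \ref{cohoalongarrow} --- one finds that $(f,g)$ is sent to
$$\bigl(\ 1_M\otimes_A f\ -\ g\otimes_B 1_M\ ,\ \ 1_N\otimes_B g\ -\ f\otimes_A 1_N\ \bigr)\in\Hom_{B-A}(M_\delta,M)\oplus\Hom_{A-B}(M_{\delta'},N).$$

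The core of the argument is a transversality observation for the canonical generators $\varepsilon_a=1_B\otimes 1_A\in M=B\otimes_k A$ and $\varepsilon_b=1_A\otimes 1_B\in N=A\otimes_k B$. First, each of $1_M\otimes_A(-)$, $(-)\otimes_B 1_M$, $1_N\otimes_B(-)$, $(-)\otimes_A 1_N$ is injective, because tensoring with a free module having a non-empty basis is a faithful operation. Secondly, a morphism of the form $1_M\otimes_A f$ sends every element $\varepsilon_a\otimes_A z$ of $M_\delta$ into $\varepsilon_a A=1_B\otimes_k A$, while a morphism of the form $g\otimes_B 1_M$ sends every element $z'\otimes_B\varepsilon_a$ into $B\varepsilon_a=B\otimes_k 1_A$; inside $M$ these two subspaces meet only in $k\varepsilon_a$. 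Now suppose $(f,g)\in\Ker\nabla'_{2m}$, so that $1_M\otimes_A f=g\otimes_B 1_M$ as a single morphism $\Phi\colon M_\delta\to M$. Using the explicit free-bimodule description $M_{\gamma_x}\cong A\otimes_k V_x\otimes_k A$, so that $f$ corresponds to $\overline f\colon V_x\to A$ via $f(a_1\otimes v\otimes a_2)=a_1\overline f(v)a_2$, one checks that there is a non-zero subspace of $M_\delta$ whose elements are simultaneously of the form $\varepsilon_a\otimes_A z$ and of the form $z'\otimes_B\varepsilon_a$; evaluating $\Phi$ on it forces, for every $a_1\in A$ and $v\in V_x$, the relation $a_1\overline f(v)\in k\cdot 1_A$. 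Taking $a_1=1_A$ gives $\overline f(v)\in k\cdot 1_A$, and then $a_1\overline f(v)\in k\cdot 1_A$ for all $a_1$ forces $\overline f(v)=0$ unless $A=k$. Hence $f=0$ whenever $A\neq k$, and then $g=0$ by injectivity of $g\mapsto g\otimes_B 1_M$. The symmetric argument applied to the second component (using $\varepsilon_b$ and $M_{\gamma_y}\cong B\otimes_k V_y\otimes_k B$) disposes of the remaining case $A=k$, $B\neq k$. Thus $\Ker\nabla'_{2m}=0$ unless $A=k=B$, which is the statement.

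The step I expect to be the real obstacle is the first one: assembling the $2\times 2$ matrix of $\nabla'_{2m}$ correctly. One must verify that $1_M\smile f$ and $g\smile 1_M$ genuinely live over the same non-cycle $\delta$ (and similarly over $\delta'$ for the other two terms), and that the two factorizations $M_\delta\cong M\otimes_A M_{\gamma_x}\cong M_{\gamma_y}\otimes_B M$ are compatible with the two corresponding descriptions of the cup product; it is here that the absence of loops and double arrows in the round trip quiver is used. Once this matrix is pinned down the transversality argument is short, and the excluded case $A=k=B$ enters only in the final implication $A\cdot\overline f(V_x)\subseteq k\cdot 1_A\Rightarrow \overline f=0$.
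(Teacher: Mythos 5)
Your proof is correct and follows essentially the same route as the paper's: reduce via Theorem \ref{nabla} to the condition $1_M\otimes_A f=g\otimes_B 1_M$ (and its mirror), use the free rank-one structure to rewrite it as an identity in $B\otimes_k A$, and conclude from the fact that $1_B\otimes_k A$ and $B\otimes_k 1_A$ meet only in $k\cdot(1_B\otimes 1_A)$ --- which is exactly the paper's $A=k\oplus A'$, $B=k\oplus B'$ computation recycled from Theorem \ref{freerankonezero}. The only cosmetic difference is that you phrase the final step elementwise via the generators $\varepsilon_a,\varepsilon_b$ rather than via the linear maps $\overline{\mu},\overline{\nu}$ on $\Hom_k$-spaces.
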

\begin{proof}
From the previous result, $\Lambda$ is a null-square projective algebra. We first pay attention to the general case where the corner projective bimodules $M$ and $N$ are not necessarily free of rank one. Consider
  $$\nabla'_{2m}: \Hom_{\AA-\AA}\left(\MM^{\otimes_\AA 2m}, \AA\right)\rightarrow
\Hom_{\AA-\AA}\left(\MM^{\otimes_\AA {2m+1}}, \MM\right)$$
and the vector space decomposition
$$\Hom_{A-A}\left((N\otimes_B M)^{\otimes_A m}, A\right)\oplus \Hom_{B-B}\left((M\otimes_A N)^{\otimes_B m},B\right)
\ \stackrel{\nabla'_{2m}}{\xrightarrow{\hspace{15mm}}}$$
$$\Hom_{B-A}\left(M\otimes_A (N\otimes_B M)^{\otimes_A m}, M\right)
\oplus
\Hom_{A-B}\left((N\otimes_B M)^{\otimes_A m}\otimes_A N, N \right).$$

Let $\left[\nabla'_{2m}\right]_M$  and $\left[\nabla'_{2m}\right]_N$ be the  components of $\nabla'_{2m}$ with values in the first and in the second target summands. Hence
$$\Ker \nabla'_{2m} = \Ker \left[\nabla'_{2m}\right]_M \cap \Ker \left[\nabla'_{2m}\right]_N.$$

Moreover, for $(\varphi, \psi)$ in the source vector space, Theorem \ref{nabla} provides
$$\left[\nabla'_{2m}\right]_M(\varphi, \psi)= 1_M\otimes \varphi - \psi\otimes 1_N.$$

Then $$\Ker \left[\nabla'_{2m}\right]_M= \{(\varphi, \psi) \ \mid \ 1_M\otimes \varphi = \psi\otimes 1_N\}.$$

In other words, let
$$\mu: \Hom_{A-A}\left((N\otimes_B M)^{\otimes_A m}, A\right)\to \Hom_{B-A}\left(M\otimes_A (N\otimes_B M)^{\otimes_A m}, M\right)$$
$$\nu : \Hom_{B-B}\left((M\otimes_A N)^{\otimes_B m},B\right)\to \Hom_{B-A}\left(M\otimes_A (N\otimes_B M)^{\otimes_A m}, M\right)$$
be defined by $\mu(\varphi) = 1_M\otimes \varphi $ and $\nu (\psi) = \psi\otimes 1_M$. Notice that $\Ker \left[\nabla'_{2m}\right]_M$ is the pullback of $\mu$ and $\nu$.

For  $M=BA$ and $N=AB$ there are canonical identifications:
\begin{itemize}

 \item $(N\otimes_BM)^{\otimes_A m}=A(BA)^m$

 \item $M\otimes_A (N\otimes_B M)^{\otimes_A m}= (BA)^{m+1}$

 \item $\Hom_{A-A}\left((N\otimes_B M)^{\otimes_A m}, A\right)= \Hom_k ( (BA)^{m-1}B, A).$

 \item $\Hom_{B-A}\left(M\otimes_A (N\otimes_B M)^{\otimes_A m}, M\right)= \Hom_k(A(BA)^{m-1}B, BA).$

\end{itemize}
For simplicity we set $X= (BA)^{m-1}$. Through the previous identifications, $\mu$ corresponds to a linear map:
$$\overline{\mu}: \Hom_k (XB, A)\to \Hom_k(AXB, BA)$$
$$\overline{\mu}(f)(a\otimes x\otimes b)= 1\otimes af(x\otimes b).$$
Similarly we obtain:
$$\overline{\nu} : \Hom_k(AX,B)\to \Hom_k(AXB, BA)$$
$$[\overline{\nu}(g)](a\otimes x\otimes b)= g(a\otimes x)b\otimes 1.$$
We suppose that for all $a\in A$, $x\in X$ and $b\in B$
$$1\otimes af(x\otimes b)= g(a\otimes x)b\otimes 1$$
which is similar to the equality (\ref{associativeone}).  Computing $\Ker \left[\nabla'_{2m}\right]_N$ leads to the analogous result, which is similar to (\ref{associativetwo}). Computations equal to those in the proof of Theorem \ref{freerankonezero} show that $f=g=0$, except when $A=k$ and $B=k$.\qed
\end{proof}

\begin{coro}
Let $A$ and $B$ be finite dimensional  algebras, and let $\Lambda=\left(
                                                                                                          \begin{array}{cc}
                                                                                                            A &  AB\\
                                                                                                           BA  & B \\
                                                                                                          \end{array}
                                                                                                        \right)$
                                                                                                        be the square algebra where the corner bimodules are free of rank one.

Except if $A=B=k$, the following hold
\begin{itemize}

\item
$ \HH\HH^0(\Lambda)= k\times k,$

\item

$ \dim \HH\HH^1(\Lambda) =
\dim \HH\HH^{1}(A) + \dim \HH\HH^{1}(B)   - (\dim \HH\HH^0 A + \dim \HH\HH^0 B) +2(\dim A \dim B +1),$

\item
$ \HH\HH^{2m}(\Lambda)=\HH\HH^{2m}(A)\oplus \HH\HH^{2m}(B)$ for $m>0$,

\item
$ \dim \HH\HH^{2m+1}(\Lambda) =
  \dim \HH\HH^{2m+1}(A) + \dim \HH\HH^{2m+1}(B)+\\2(\dim A \dim B)^m(\dim A \dim B -1)$ for $m>0$.
\end{itemize}
\end{coro}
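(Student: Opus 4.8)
The plan is to combine the structural results already established for null-square projective algebras with the explicit dimension count for free rank-one corner bimodules. First I would invoke Theorem \ref{freerankonezero}: since $M=BA$ and $N=AB$ are free of rank one, the associativity maps $\alpha,\beta$ vanish (we are not in the case $A=B=k$), so $\Lambda$ is a null-square projective algebra and all the machinery of Theorem \ref{HHnullssquare} and Proposition \ref{injective} applies. Then the $\HH\HH^0$ statement is immediate: $\HH\HH^0(\Lambda)\subset\HH\HH^0(\AA)=\HH\HH^0(A)\times\HH\HH^0(B)$, and since $\nabla_0$ sends the center of $\AA$ into $\End_{\AA-\AA}\MM$ via the commutator with $1_\MM$, one checks on the generators $1_A,1_B$ (using $\MM=BA\oplus AB$ and the explicit module structure) that $\nabla_0$ is injective with image... actually it is cleaner to observe $\HH\HH^0(\Lambda)=Z(\Lambda)$ and compute the center of the $2\times2$ matrix algebra directly: a central element must be diagonal $\mathrm{diag}(z_A,z_B)$ with $z_A$ acting on $M=BA$ on the left as $1$ acts, forcing $z_A\in k$, similarly $z_B\in k$, and the two scalars are independent because the off-diagonal products vanish. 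Hence $\HH\HH^0(\Lambda)=k\times k$.

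For the even degrees $2m$ with $m>0$, I would apply Theorem \ref{HHnullssquare}, which gives $\HH\HH^{2m}(\Lambda)=\HH\HH^{2m}(\AA)\oplus\Ker\nabla'_{2m}$, together with Proposition \ref{injective}, which says $\nabla'_{2m}$ is injective in this free rank-one situation (again excluding $A=B=k$), so $\Ker\nabla'_{2m}=0$ and $\HH\HH^{2m}(\Lambda)=\HH\HH^{2m}(\AA)=\HH\HH^{2m}(A)\oplus\HH\HH^{2m}(B)$ by Corollary \ref{prodofcoho}. For the odd degrees $2m+1$ with $m>0$, Theorem \ref{HHnullssquare} gives the short exact sequence $0\to\Coker\nabla'_{2m}\to\HH\HH^{2m+1}(\Lambda)\to\HH\HH^{2m+1}(\AA)\to0$, hence in the finite dimensional case $\dim\HH\HH^{2m+1}(\Lambda)=\dim\HH\HH^{2m+1}(A)+\dim\HH\HH^{2m+1}(B)+\dim\Coker\nabla'_{2m}$. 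Since $\nabla'_{2m}$ is injective, $\dim\Coker\nabla'_{2m}=\dim(\text{target})-\dim(\text{source})$ of $\nabla'_{2m}$. Using the canonical identifications listed in the proof of Proposition \ref{injective} with $M=BA$, $N=AB$, the source $\Hom_{\AA-\AA}(\MM^{\otimes_\AA 2m},\AA)$ is identified with $\Hom_k((BA)^{m-1}B,A)\oplus\Hom_k((AB)^{m-1}A,B)$, of dimension $2(\dim A\dim B)^{m-1}\dim A\dim B=2(\dim A\dim B)^{m}$, and the target $\Hom_{\AA-\AA}(\MM^{\otimes_\AA 2m+1},\MM)$ is identified with $\Hom_k(A(BA)^{m-1}B,BA)\oplus\Hom_k(B(AB)^{m-1}A,AB)$, of dimension $2(\dim A\dim B)^{m}\dim A\dim B=2(\dim A\dim B)^{m+1}$. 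Thus $\dim\Coker\nabla'_{2m}=2(\dim A\dim B)^{m+1}-2(\dim A\dim B)^{m}=2(\dim A\dim B)^{m}(\dim A\dim B-1)$, giving the stated formula.

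The case $n=1$ is handled separately from the $m=0$ five-term sequence $0\to\HH\HH^0(\Lambda)\to\HH\HH^0(\AA)\to\End_{\AA-\AA}\MM\to\HH\HH^1(\Lambda)\to\HH\HH^1(\AA)\to0$. Taking alternating sums of dimensions, $\dim\HH\HH^1(\Lambda)=\dim\HH\HH^1(\AA)-\dim\HH\HH^0(\AA)+\dim\HH\HH^0(\Lambda)+\dim\End_{\AA-\AA}\MM$. Here $\dim\HH\HH^0(\Lambda)=2$ by the first item, $\dim\HH\HH^i(\AA)=\dim\HH\HH^i(A)+\dim\HH\HH^i(B)$, and $\End_{\AA-\AA}\MM=\End_{B-A}(BA)\oplus\End_{A-B}(AB)\cong\Hom_k(k,k)^{?}$; more precisely $\End_{B-A}(BA)\cong(BA)^{\text{as }B\text{-}A\text{-bimod invariants})}$... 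I should instead note $\End_{B-A}(B\otimes_k A)\cong B\otimes_k A$ as a vector space (a bimodule endomorphism of the free rank-one bimodule is multiplication by an element of $B\otimes_k A$), so $\dim\End_{\AA-\AA}\MM=2\dim A\dim B$. Substituting gives $\dim\HH\HH^1(\Lambda)=\dim\HH\HH^1(A)+\dim\HH\HH^1(B)-(\dim\HH\HH^0 A+\dim\HH\HH^0 B)+2+2\dim A\dim B=\dim\HH\HH^1(A)+\dim\HH\HH^1(B)-(\dim\HH\HH^0 A+\dim\HH\HH^0 B)+2(\dim A\dim B+1)$, matching the statement.

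The main obstacle I anticipate is pinning down the dimensions of the source and target of $\nabla'_{2m}$ precisely and keeping the bookkeeping of the canonical isomorphisms consistent — in particular correctly tracking how many tensor factors of $A$ versus $B$ appear in $(BA)^{m-1}B$ versus $A(BA)^{m-1}B$, and that the two symmetric summands (the $M$-component and the $N$-component) each contribute equally, which is what produces the factor $2$. Everything else is a direct application of the already-proven Theorems \ref{freerankonezero}, \ref{HHnullssquare} and Proposition \ref{injective}, together with routine dimension arithmetic; no new idea is needed beyond carefully substituting $M=BA$, $N=AB$ into the identifications recorded there.
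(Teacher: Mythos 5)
Your proposal reproduces the paper's proof essentially step by step: the direct computation of the center for $\HH\HH^0$, the alternating sum over the $m=0$ five-term exact sequence together with $\End_{B-A}(B\otimes_k A)\cong B\otimes_k A$ for $\HH\HH^1$, and Theorem \ref{HHnullssquare} combined with the injectivity of $\nabla'_{2m}$ from Proposition \ref{injective} and the dimension count of its source and target for the degrees $\geq 2$. That count ($2(\dim A\dim B)^{m}$ for the source, $2(\dim A\dim B)^{m+1}$ for the target, hence $\dim\Coker\nabla'_{2m}=2(\dim A\dim B)^{m}(\dim A\dim B-1)$) is exactly the paper's and is correct.

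The one step that does not hold up is your justification of the first bullet, and it matters because $\dim\HH\HH^0(\Lambda)$ enters the alternating sum for $\HH\HH^1$. You argue that a central element is $\mathrm{diag}(z_A,z_B)$ with $z_A,z_B\in k$, and that ``the two scalars are independent because the off-diagonal products vanish.'' The vanishing of $MN$ and $NM$ is irrelevant to centrality against the corner entries: commuting with $\left(\begin{smallmatrix}0&0\\ m&0\end{smallmatrix}\right)$ imposes the bimodule condition $z_Bm=mz_A$ for all $m\in M=B\otimes_k A$ (and similarly on $N$). Evaluating at $m=1\otimes 1$ gives $z_B\otimes 1=1\otimes z_A$ in $B\otimes_k A$, which forces $z_A=z_B=\lambda\in k$; so the argument as written yields a one-dimensional center $k$ embedded diagonally, not $k\times k$. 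The paper's own proof asserts the value $k\times k$ without detail, so your conclusion matches the printed statement, but the reason you give does not establish it; you need either a genuine argument for the independence of the two scalars or to accept that the center is $k$ and adjust the constant term in the $\HH\HH^1$ formula accordingly.
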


\begin{proof}
The center of a square algebra  $\Lambda=\left(
                                                                                                          \begin{array}{cc}
                                                                                                            A & N \\
                                                                                                            M & B \\
                                                                                                          \end{array}
                                                                                                        \right)$
 is
$$\{(a,b)\in \HH\HH^0(A)\times  \HH\HH^0(B) \mid \mbox{\ for all \ } m\in M\   n\in N, \ bm=ma \mbox{ \ and\ } an=nb\}.$$

If $M=BA$ and $N=AB$, we infer  $ \HH\HH^0(\Lambda)= k\times k$.

Note that, as vector spaces, $\End_{B-A}BA = BA$ and  $\End_{A-B}AB = AB$. The five-term exact sequence  of Theorem \ref{HHnullssquare} for $m=0$ is
$$0\to k\times k \to \HH\HH^0(A)\oplus \HH\HH^0(B) \stackrel{\nabla_0}{\to} BA\oplus AB \to \HH\HH^1(\Lambda) \to \HH\HH^1(A)\oplus\HH\HH^1(B)\to 0.$$
Hence
\small
$$2-\dim \HH\HH^0 A - \dim \HH\HH^0 B + 2 \dim A\dim B - \dim \HH\HH^1(\Lambda) + \dim \HH\HH^{1}(A) + \dim \HH\HH^{1}(B) =0.$$
\normalsize
The two  equalities for $m>0$ are a consequence of Theorem \ref{HHnullssquare} and of Proposition \ref{injective}. Indeed,
\vskip2mm

$\begin{array}{llll}
\dim  \Hom_{B-A}\left(M\otimes_A (N\otimes_B M)^{\otimes_A m}, M\right)&=&\dim\Hom_k(A(BA)^{m-1}B, BA) =\\
&&(\dim A \dim B)^{m+1}\\
\\
\dim  \Hom_{A-B}\left(N\otimes_B (M\otimes_A N)^{\otimes_B m}, N\right)&=&\dim\Hom_k(B(AB)^{m-1}A, AB)=\\&&(\dim A \dim B)^{m+1}
\end{array}$

and
\vskip3mm
$\begin{array}{llll}
\dim \Hom_{A-A}\left((N\otimes_B M)^{\otimes_A m}, A\right)&=&\dim\Hom_k ( (BA)^{m-1}B, A)=\\
&&(\dim A \dim B)^m\\
\\
\dim\Hom_{B-B}\left((M\otimes_A N)^{\otimes_B m}, B\right)&=&\dim\Hom_k ( (AB)^{m-1}A, B)=\\
&&(\dim A \dim B)^m.
\end{array}$

\noindent Since $\nabla'_{2m}$ is injective, we obtain:
$$\dim \coker\nabla'_{2m}= 2(\dim A \dim B)^{m+1} - 2(\dim A \dim B)^m.$$\qed
\end{proof}

\begin{coro}
Let $\Lambda$ be a $k$-algebra as in the previous result.
\begin{itemize}
  \item $\HH\HH^{2m+1}(\Lambda)\neq 0$ for all $m$.
  \item For $m>0$, $\HH\HH^{2m}(\Lambda)=0$ if and only if $\HH\HH^{2m}(A)=0 = \HH\HH^{2m}(B)$.
\end{itemize}

\end{coro}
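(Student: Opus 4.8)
The plan is to deduce both assertions directly from the explicit dimension formulas of the preceding corollary, which are available precisely because $\Lambda$ is as there; in particular the standing hypothesis excludes the case $A=B=k$. No new homological input is needed, only elementary positivity estimates. First I would settle the even case: for $m>0$ the preceding corollary gives $\HH\HH^{2m}(\Lambda)=\HH\HH^{2m}(A)\oplus\HH\HH^{2m}(B)$, and a direct sum of vector spaces vanishes if and only if each summand vanishes, which is exactly the second bullet.

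For the odd case set $p=\dim_k A$ and $q=\dim_k B$, so that $pq\geq 2$ since $(A,B)\neq(k,k)$. When $m>0$ the preceding corollary yields
\[
\dim\HH\HH^{2m+1}(\Lambda)=\dim\HH\HH^{2m+1}(A)+\dim\HH\HH^{2m+1}(B)+2(pq)^m(pq-1)\geq 2(pq)^m(pq-1)\geq 2^{m+1}>0,
\]
the first two summands being nonnegative and $pq\geq 2$. When $m=0$ I would instead use
\[
\dim\HH\HH^{1}(\Lambda)=\dim\HH\HH^{1}(A)+\dim\HH\HH^{1}(B)-\dim\HH\HH^{0}(A)-\dim\HH\HH^{0}(B)+2(pq+1),
\]
together with $\dim\HH\HH^{1}(A),\dim\HH\HH^{1}(B)\geq 0$ and $\dim\HH\HH^{0}(A)\leq\dim_k A=p$, $\dim\HH\HH^{0}(B)\leq q$, to obtain $\dim\HH\HH^{1}(\Lambda)\geq 2pq-p-q+2$. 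Regarded as a function of $p\geq 1$ this lower bound is increasing and equals $q+1\geq 2$ at $p=1$, hence is $\geq 2>0$; thus $\HH\HH^{1}(\Lambda)\neq 0$, which completes the first bullet for every $m\geq 0$.

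There is essentially no obstacle once the formulas are in hand; the only point deserving a word is why $A=B=k$ must be barred. In that degenerate case the corner bimodules are one-dimensional, the bimodule maps need not vanish, and $\Lambda$ may be $M_2(k)$, whose Hochschild cohomology is concentrated in degree zero, so $\HH\HH^{1}(\Lambda)=0$ and the first bullet would fail. Under the standing hypothesis this case is excluded, and the whole argument reduces to the trivial inequality $2pq-p-q+2>0$ for $p,q\geq 1$, together with $pq\geq 2$ and the vanishing criterion for a direct sum.
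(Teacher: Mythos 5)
Your proposal is correct and follows the same route the paper intends: the corollary is stated as an immediate consequence of the dimension formulas in the preceding corollary, and your elementary estimates (in particular $\dim\HH\HH^0(A)\leq\dim_k A$ and the inequality $2pq-p-q+2\geq 2$ for $p,q\geq 1$) fill in exactly the arithmetic the paper leaves implicit. The remark on why $A=B=k$ must be excluded is also accurate.
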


\begin{rema}
Let $A=kQ_A/I$ and $B=kQ_B/J$ be finite dimensional algebras where $Q_A$ and $Q_B$ are finite quivers, and $I$ and $J$ are admissible ideals - hence $Q_A$ and $Q_B$ are the Gabriel's quivers of $A$ and $B$.

Let $\Lambda=\left(
                                                                                                          \begin{array}{cc}
                                                                                                            A &  AB\\
                                                                                                           BA  & B \\
                                                                                                          \end{array}
                                                                                                        \right)$
                                                                                                        be a square algebra where the corner bimodules are free of rank one.

The Gabriel quiver $Q_\Lambda$ of $\Lambda$ is the disjoint union of $Q_A$ and  $Q_B$, with in addition new arrows as follows: one arrow from each vertex of $Q_A$ to each vertex of $Q_B$, and conversely. Let $K$ be the admissible ideal of $kQ_\Lambda$  generated by $I$, $J$, and all the paths containing two new arrows. Then $\Lambda = kQ_\Lambda /K$.
\end{rema}

\begin{exam}
Let $A=kQ_A$ and $B=kQ_B$ be finite dimensional path algebras, where $Q_A$ and $Q_B$ are connected quivers without oriented cycles which are not both reduced to one vertex without arrows - that is $A=B=k$ is not considered.

Let $Q$ be the quiver as in the previous remark, that is $Q$ is the disjoint union of $Q_A$ and  $Q_B$, with in addition new arrows joining all the vertices of $Q_A$ to all the vertices of $Q_B$ and \emph{vice versa}. Let $K$ be the two sided ideal of $kQ$ generated by the paths of the form $v\omega u$ where $\omega$ is a path of $Q_A$ or of $Q_B$, and $u$ and $v$ are new arrows. Let $\Lambda=kQ/K$. Then
\begin{itemize}
  \item
$\dim \HH\HH^0(\Lambda)=2,$
\item
$\dim \HH\HH^1(\Lambda)=\dim \HH\HH^1(A)+\dim \HH\HH^1(B)+ 2 \dim A \dim B,$
\item
$\dim \HH\HH^{2m}(\Lambda)=0$, for $m>0$,
\item
$\dim \HH\HH^{2m+1}(\Lambda) =  2(\dim A \dim B)^m(\dim A \dim B -1)$, for $m>0$.
\end{itemize}

Indeed, the Hochschild cohomology of a path algebra vanishes in degrees $2$ and higher.
\end{exam}

\section{\sf Square projective algebras via Peirce quivers}\label{Peirce}

The main purpose of this section is to describe square algebras $\Lambda=\left(
                                                                                                          \begin{array}{cc}
                                                                                                            A & N \\
                                                                                                            M & B \\
                                                                                                          \end{array}
                                                                                                        \right)$
which have the property that  $$\HH\HH^n(\Lambda)=\HH\HH^n(A)\oplus \HH\HH^n(B)$$ for $n$ large enough.

Recall that a system $E$ of a $k$-algebra $\Lambda$ is a finite set of complete orthogonal idempotents - not necessarily primitive.

Let $\Lambda$ be a $k$-algebra arising from a $k$-category built on a simply laced quiver with a $Q$-set $\Delta$. As already mentioned, the Peirce $Q_0$-quiver of $\Lambda_\Delta$ is  $Q$.

In this section we will consider null-square projective algebras   $\Lambda=\left(
                                                                                                          \begin{array}{cc}
                                                                                                            A & N \\
                                                                                                            M & B \\
                                                                                                          \end{array}
                                                                                                        \right)$
where the projective corner bimodules will be given through systems of $A$ and $B$. Let $E$ and $F$ be systems of the algebras $A$ and $B$, respectively. If $e\in E$ and $f\in F$, then $Bf\otimes eA$ is a projective $B-A$-bimodule. Hence for integers ${}_fm_e\geq 0$, the bimodule
$$M=\bigoplus_{e\in E, f\in F} {}_fm_e (Bf\otimes eA) $$
is also projective. Note that $M$ is free of rank one if and only if all the integers ${}_fm_e$ are  $1$.

The proof of the following is straightforward.

\begin{lemm}\label{Peirceofnullsquareproj}
Let $A$ and $B$ be algebras provided with systems $E$ and $F$, and let $Q_E$ and $Q_F$ be their corresponding Peirce quivers.

Let $M=\bigoplus_{e\in E, f\in F} {}_fm_e (Bf\otimes eA) $ and $N=\bigoplus_{e\in E, f\in F} {}_en_f (Ae\otimes fB)$ be projective bimodules and let $\Lambda=\left(
                                                                                                          \begin{array}{cc}
                                                                                                            A & N \\
                                                                                                            M & B \\
                                                                                                          \end{array}
                                                                                                        \right)$
be the corresponding null-square projective algebra, that is the algebra arising from a $k$-category with zero compositions built on the round trip quiver with set the algebras $A$ and $B$ on the vertices and the bimodules $M$ and $N$ on the arrows. We set $\AA=A\times B$ and $\MM=M\oplus N$.

The set $E\cup F$ is a system of $\Lambda$, and the Pierce $E\cup F$-quiver $Q_{E\cup F}$ of $\Lambda$ is the disjoint union of $Q_E$ and $Q_F$ - we view these quivers as located in horizontal up and down plans. In addition there is a \emph{(vertical down)} arrow from $e$ to $f$ if ${}_fm_e\neq 0$ and a \emph{(vertical up)} arrow from $f$ to $e$ if ${}_en_f\neq 0$, see Figure \ref{figure} below.
\end{lemm}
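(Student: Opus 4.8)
The plan is to obtain everything from the Peirce decomposition of $\Lambda=\left(\begin{array}{cc} A & N \\ M & B\end{array}\right)$ attached to the matrix description. First I would embed $E$ and $F$ into $\Lambda$ by sending $e\in E$ to the idempotent $\hat{e}$ having $e$ in the upper-left corner and zeros elsewhere, and $f\in F$ to the idempotent $\hat{f}$ having $f$ in the lower-right corner. Because $E$ and $F$ are complete sets of orthogonal idempotents of $A$ and $B$, the $\hat{e}$ are pairwise orthogonal idempotents, so are the $\hat{f}$, and $\hat{e}\hat{f}=\hat{f}\hat{e}=0$ since the two sit in complementary diagonal blocks and the one-sided actions carried by $M$ and $N$ are annihilated on the relevant side; moreover $\sum_{e\in E}\hat{e}+\sum_{f\in F}\hat{f}=(1_A,1_B)=1_\Lambda$. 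Hence $E\cup F$ is a system of $\Lambda$, and the Peirce $(E\cup F)$-quiver $Q_{E\cup F}$ is defined, see Definition \ref{Peircequiver}.

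Next I would compute $h\Lambda g$ for every pair of distinct $g,h\in E\cup F$, writing $\Lambda=A\oplus N\oplus M\oplus B$ in the four matrix slots. For $g=e$, $h=e'$ in $E$, block multiplication gives $\hat{e'}\Lambda\hat{e}=\left(\begin{array}{cc} e'Ae & 0 \\ 0 & 0\end{array}\right)$, which is nonzero precisely when $e'Ae\neq 0$, that is precisely when $Q_E$ has an arrow $e\to e'$; symmetrically $\hat{f'}\Lambda\hat{f}=\left(\begin{array}{cc} 0 & 0 \\ 0 & f'Bf\end{array}\right)$ for $f,f'\in F$, governed by $Q_F$. Thus the full subquivers of $Q_{E\cup F}$ on the vertex sets $E$ and $F$ are exactly $Q_E$ and $Q_F$, with no further \emph{horizontal} arrows between the two, which is the sense in which $Q_{E\cup F}$ contains the disjoint union $Q_E\sqcup Q_F$.

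For the vertical arrows I would handle the two mixed cases. For $g=e\in E$, $h=f\in F$ one gets $\hat{f}\Lambda\hat{e}=\left(\begin{array}{cc} 0 & 0 \\ fMe & 0\end{array}\right)$, so there is an arrow $e\to f$ iff $fMe\neq 0$; distributing the corner idempotents through $M=\bigoplus_{e'\in E,\,f'\in F}{}_{f'}m_{e'}(Bf'\otimes_k e'A)$ yields $fMe=\bigoplus_{e',f'}{}_{f'}m_{e'}(fBf'\otimes_k e'Ae)$, whose $(e',f')=(e,f)$ summand consists of ${}_fm_e$ copies of $fBf\otimes_k eAe$ and contains $f\otimes e\neq 0$ whenever ${}_fm_e>0$. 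So ${}_fm_e\neq 0$ produces the claimed (vertical down) arrow $e\to f$; the symmetric computation $\hat{e}\Lambda\hat{f}=\left(\begin{array}{cc} 0 & eNf \\ 0 & 0\end{array}\right)$ together with $N=\bigoplus{}_{e}n_{f}(Ae\otimes_k fB)$ produces the (vertical up) arrow $f\to e$ whenever ${}_en_f\neq 0$. Combining the four cases gives the description of $Q_{E\cup F}$ in the statement and in Figure \ref{figure}.

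I do not expect a genuine obstacle: the argument is bookkeeping with block matrices, and the only delicate point is keeping track of which one-sided module action survives left, respectively right, multiplication by a corner idempotent (so that, for instance, $\hat{e}$ kills the $M$- and $B$-slots on the left). It is worth recording that the displayed formula for $fMe$ makes the exact arrow criterion transparent, and in particular confirms that every vertical arrow listed in the statement is indeed present.
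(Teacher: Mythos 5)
Your proof is correct and is exactly the direct block-matrix verification the authors have in mind: the paper declares the proof straightforward and omits it, and your computation of $\hat{e'}\Lambda\hat{e}$, $\hat{f'}\Lambda\hat{f}$, $\hat{f}\Lambda\hat{e}$ and $\hat{e}\Lambda\hat{f}$ covers every case needed. One remark worth keeping: as your formula $fMe=\bigoplus_{e',f'}{}_{f'}m_{e'}\,(fBf'\otimes e'Ae)$ makes visible, the precise criterion for a vertical arrow $e\to f$ in the Peirce quiver is $fMe\neq 0$, which can hold even when ${}_fm_e=0$ (for instance when ${}_{f'}m_{e'}\neq 0$ with $e'Ae\neq 0$ and $fBf'\neq 0$), so the lemma's ``if'' cannot in general be upgraded to ``if and only if'' --- you prove exactly the implication that is stated, and you are right to flag the exact criterion explicitly.
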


\begin{defi}
In the situation of the previous lemma, an \emph{efficient path} of $Q_{E\cup F}$ is a path of $Q_{E\cup F}$ which does not contain two successive arrows of $Q_E$, nor of $Q_F$.

\end{defi}

\begin{theo}
In the setting of Lemma \ref{Peirceofnullsquareproj}, there exists $h$ such that $\MM^{\otimes_\AA h}=0$ if and only if $Q_{E\cup F}$ has  no efficient cycles.
\end{theo}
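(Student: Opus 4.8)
The plan is to decompose $\MM^{\otimes_\AA h}$ completely, read off exactly when it vanishes, and translate that condition into the combinatorics of $Q_{E\cup F}$.

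First I would decompose the tensor powers. Writing the corner bimodules as sums of elementary projective bimodules, $M=\bigoplus{}_fm_e\,(Bf\otimes_k eA)$ over the vertical down arrows $e\to f$ and $N=\bigoplus{}_en_f\,(Ae\otimes_k fB)$ over the vertical up arrows $f\to e$, and using that $\otimes_\AA$ commutes with direct sums, $\MM^{\otimes_\AA h}$ becomes a direct sum of $h$-fold tensor products of these elementary bimodules. Since $Bf\otimes eA$ is an $\AA$-bimodule on which only $B$ acts on the left and only $A$ on the right (the remaining actions being zero), and dually for $Ae\otimes fB$, any summand with two consecutive factors of the same kind vanishes; the surviving ones are indexed by length-$h$ words in the elementary bimodules that alternate between the two kinds, equivalently by sequences $(p_1,\dots,p_h)$ of vertical arrows of $Q_{E\cup F}$ that alternate down/up.

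Next I would compute the vanishing of such a summand. For an alternating word the tensor product telescopes, via the canonical isomorphisms $(Bf\otimes_k eA)\otimes_\AA(Ae'\otimes_k f'B)=Bf\otimes_k(eAe')\otimes_k f'B$ and $(Ae\otimes_k fB)\otimes_\AA(Bf'\otimes_k e'A)=Ae\otimes_k(fBf')\otimes_k e'A$, so the summand attached to $(p_1,\dots,p_h)$ is non-zero if and only if at each of the $h-1$ interfaces the corresponding idempotent product $eAe'$ or $fBf'$ is non-zero. By the definition of the Peirce quivers, $eAe'\neq 0$ iff $e=e'$ or there is an arrow $e'\to e$ in $Q_E$ (and similarly for $F$), and since $Q_E,Q_F$ are simply laced this connecting horizontal arrow is unique when the two idempotents differ. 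Inserting these interface arrows, I conclude: the non-zero summands of $\MM^{\otimes_\AA h}$ correspond, up to the multiplicities $_fm_e,\,_en_f$ which do not affect non-vanishing, to the efficient paths of $Q_{E\cup F}$ that begin and end with a vertical arrow and have exactly $h$ vertical arrows.

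Then I would finish combinatorially. Since $\MM^{\otimes_\AA(h+1)}=\MM^{\otimes_\AA h}\otimes_\AA\MM$, the set $\{h:\MM^{\otimes_\AA h}=0\}$ is upward closed, so some $h$ works iff all large $h$ do, iff the number of vertical arrows occurring in efficient paths that begin and end with a vertical arrow is bounded. I would encode such paths as walks in the finite directed graph $\Gamma$ whose vertices are the vertical arrows of $Q_{E\cup F}$ and in which there is an edge $p\to q$ whenever $t(p)=s(q)$ or there is an arrow $t(p)\to s(q)$ in $Q_E\cup Q_F$ (the types then matching automatically); walks in $\Gamma$ are precisely these efficient paths, with the number of vertices equal to the number of vertical arrows, so the lengths of walks in $\Gamma$ are bounded iff $\Gamma$ is acyclic. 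Finally I would check that $\Gamma$ has a directed cycle iff $Q_{E\cup F}$ has an efficient cycle: a directed cycle of $\Gamma$, with the (at most one) connecting horizontal arrow restored at each step, is an efficient cyclic path; conversely an efficient cycle must contain a vertical arrow — a path without vertical arrows stays inside $Q_E$ or inside $Q_F$, and a cycle there, being of length $\geq 2$ in a loopless quiver, would have two successive arrows of one part — and reading off its vertical arrows in cyclic order, with at most one horizontal arrow between two cyclically consecutive ones, produces a directed cycle of $\Gamma$. Chaining the equivalences gives the statement.

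The hard part is this last equivalence between directed cycles of $\Gamma$ and efficient cycles of $Q_{E\cup F}$: one must use that efficiency is a purely local condition on pairs of consecutive arrows, so that it transfers correctly between paths of $Q_{E\cup F}$ and walks of $\Gamma$, take care with the cyclic wrap-around, and exploit that $Q_E,Q_F$ are loopless (so an efficient cycle genuinely crosses between the two parts, which is what lets every relevant path be normalised to begin and end with a vertical arrow). The first two steps are routine telescoping once the alternation is noticed.
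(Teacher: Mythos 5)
Your proposal is correct and follows essentially the same route as the paper: decompose $\MM^{\otimes_\AA h}$ into elementary summands indexed by alternating words of vertical arrows, observe that a summand survives exactly when the interface products $eAe'$ or $fBf'$ are non-zero, i.e.\ exactly when the word completes to an efficient path with $h$ vertical arrows, and then convert the existence of arbitrarily long such paths into the existence of an efficient cycle by a pigeonhole/pumping argument (your auxiliary graph $\Gamma$ is just a cleaner packaging of the paper's terse ``the proof follows by the same type of considerations''). The wrap-around point you flag is genuinely the only delicate spot: with the definition of efficiency read literally a cycle could begin and end with horizontal arrows of the same half, in which case it yields neither a cycle of $\Gamma$ nor non-vanishing tensor powers, so both your step ``at most one horizontal arrow between cyclically consecutive vertical arrows'' and the paper's own pumping step ``an efficient cycle with $l$ vertical arrows gives efficient paths with $rl$ vertical arrows'' require efficiency of a cycle to be understood cyclically (including the pair formed by the last and first arrows), and with that convention your argument closes.
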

\begin{proof} We  first assert that for $h\geq 2$, we have   $\MM^{\otimes_\AA h}\neq 0$ if and only if there exists an efficient path which has $h$ vertical arrows.

The following decomposition holds:
\vskip1,5mm
$\begin{array}{ll}
N\otimes_B M= &\bigoplus_{\substack{e,e'\in E\\ f,f'\in F}} {}_{e'}n_{f'}\ {}_fm_e \left(Ae'\otimes f'B\otimes_BBf\otimes eA\right)=\\
\\
&\bigoplus_{\substack{e,e'\in E\\ f,f'\in F}} {}_{e'}n_{f'}\ {}_fm_e \left(Ae'\otimes f'Bf\otimes eA\right).
\end{array}$
\vskip1,5mm
Note that if the direct summand $\left(Ae'\otimes f'Bf\otimes eA\right)$ is not zero, then there is an efficient path from $e$ to $e'$ which contains two vertical arrows, namely  from $e$ to $f$ and from $f'$ to $e'$. If $f\neq f'$, then there is an arrow in $Q_B$ corresponding to $f'Bf\neq 0$, see Figure \ref{figure}. If $f=f'$, then the vertical arrows are concatenated.

Conversely, if there is an efficient path starting at a vertex $e_-\in E$ and  ending at a vertex $e'_+$ of $E$  which has two vertical arrows, then there is a direct summand of  $N\otimes_B M$ which is non zero.

\renewcommand{\figurename}{Fig.}

\begin{figure}[h]
\begin{center}
\begin{tikzpicture}[
        x={(-0.35cm,-0.35cm)},
        y={(0.7cm,0cm)},
        z={(0cm,0.9cm)},
        font=\tiny,
        ]
  \fill[black!7] (-4,0,0) -- (1,0,0) -- (1,6,0) -- (-4,6,0) -- cycle;
  \fill[black!7] (-4,0,2) -- (1,0,2) -- (1,6,2) -- (-4,6,2) -- cycle;
  \draw[thick] (-4,0,0) -- (1,0,0) -- (1,6,0);
  \draw[thick] (-4,0,2) -- (1,0,2) -- (1,6,2);
  \coordinate (f') at (-1,3,0);
  \coordinate (f) at (-2,4,0);
  \coordinate (e'_+) at (-2,2,2);
  \coordinate (e_-) at (-3,3,2);
  \coordinate (e') at (-1,3,2);
  \coordinate (e) at (-2,4,2);
  \foreach \n in {f', f, e, e'}
    {
    \fill (\n) circle(1.5pt);
    \node[below right] at (\n) {$\n$};
    }
  \foreach \n in {e'_+, e_-}
    {
    \fill (\n) circle(1.5pt);
    \node[above left] at (\n) {$\n$};
    }
  \begin{scope}[
        shorten <=3pt,
        shorten >=3pt,
        thick
        ]
  \foreach \a / \b in {f/f', e'/e'_+, e_-/e}
    \draw[->] (\a) -- (\b);
  \foreach \a / \b in {f'/e'}
    {
    \draw[shorten >=0pt] (\a) -- (\a |- 1,0,2);
    \draw[->, shorten <=0pt, densely dashed] (\a |- 1,0,2) -- (\b);
    }
  \foreach \a / \b in {e/f}
    {
    \draw[shorten >=0pt, densely dashed] (\a) -- (\a |- 1,0,2);
    \draw[->, shorten <=0pt] (\a |- 1,0,2) -- (\b);
    }
  \node at (0.5,0.25,0) {$B$};
  \node at (0.5,0.25,2) {$A$};
  \end{scope}
\end{tikzpicture}
\end{center}
\caption{}
\label{figure}
\end{figure}

Hence, $N\otimes_B M\neq 0$ if and only if there exists an efficient path starting and ending at vertices of $E$, and containing two vertical arrows. The analogous statement holds for $M\otimes_A N.$

Since $\MM\otimes_\AA \MM = (N\otimes_B M) \oplus (M\otimes_A N)$, the assertion is proved for $h=2$. For arbitrary $h$, the proof follows by the same type of considerations.

If there are no efficient cycles, then the length of the efficient paths is bounded, since the number of vertical arrows is finite. Hence there exists $h$ such that $\MM^{\otimes_{\AA}h}=0$. Conversely, if $\MM^{\otimes_{\AA}h}=0$ for some $h$, then $\MM^{\otimes_{\AA}n}=0$ for all $n\geq h$. However, if there is an efficient cycle with $l$ vertical arrows, then there are efficient paths with $rl$ vertical arrows for any positive integer $r$, and $\MM^{\otimes_\AA rl}\neq 0$ for all positive integers $r$. \qed
\end{proof}
\begin{theo}\label{noefficientcycles}
Let $A$ and $B$ be algebras provided with systems $E$ and $F$ respectively. Let $M=\bigoplus_{e\in E, f\in F} {}_fm_e (Bf\otimes eA) $ and $N=\bigoplus_{e\in E, f\in F} {}_en_f (Ae\otimes fB)$ be projective bimodules and let $\Lambda=\left(
                                                                                                          \begin{array}{cc}
                                                                                                            A & N \\
                                                                                                            M & B \\
                                                                                                          \end{array}
                                                                                                        \right)$
be the corresponding null-square projective algebra. We set $\AA=A\times B$ and $\MM=M\oplus N$. Suppose there are no efficient cycles in the $E\cup F$-Peirce quiver of $\Lambda$. There exists a positive integer $h$ such that for $n\geq h$
$$\HH\HH^n(\Lambda) = \HH\HH^n(A)\oplus\HH\HH^n(B).$$

\end{theo}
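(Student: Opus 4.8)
The statement is the combination of the two results that immediately precede it, so the proof should be short. First I would invoke the theorem just established: the hypothesis that the $E\cup F$-Peirce quiver of $\Lambda$ contains no efficient cycles yields a positive integer $h$ with $\MM^{\otimes_\AA h}=0$. Concretely, the absence of efficient cycles bounds the total number of vertical arrows that can appear in an efficient path; since the decomposition worked out in the proof of that theorem identifies the non-zero direct summands of $\MM^{\otimes_\AA n}$ with efficient paths (between vertices of $E$, resp.\ of $F$) having $n$ vertical arrows, once no such path exists the tensor power is zero, and then $\MM^{\otimes_\AA n}=0$ for every $n\ge h$.

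With such an $h$ fixed, I would apply the second item of Corollary \ref{prodofcoho}: the vanishing $\MM^{\otimes_\AA h}=0$ forces $\HH\HH^n(\Lambda)=\HH\HH^n(A)\oplus\HH\HH^n(B)$ for all $n\ge h$. Tracing this back, it rests on $\AA=A\times B$, whence $\HH\HH^n(\AA)=\HH\HH^n(A)\oplus\HH\HH^n(B)$, together with the five-term exact sequences of Theorem \ref{HHnullssquare}: for $n\ge h$ all the terms $\Hom_{\AA-\AA}(\MM^{\otimes_\AA n},\MM)$ and $\Hom_{\AA-\AA}(\MM^{\otimes_\AA n},\AA)$ that flank $\HH\HH^n(\Lambda)$ vanish, so $\nabla'_n$ is trivially an isomorphism of zero spaces and each five-term sequence collapses to an isomorphism $\HH\HH^n(\Lambda)\cong\HH\HH^n(\AA)$. (Alternatively one can run the argument directly from the long exact sequence of Corollary \ref{pro}, truncated in degrees $\ge h$, where all the $\Hom_{\AA-\AA}((\MM^{\otimes_\AA n})_D,\MM)$ and $\Hom_{\AA-\AA}((\MM^{\otimes_\AA n})_C,\AA)$ terms disappear.)

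Since every ingredient is already in place, there is no genuine obstacle; the only point needing a moment's care is the bookkeeping on the parity of $n$ in Theorem \ref{HHnullssquare}, where even and odd degrees are phrased slightly differently. In both parities, however, the relevant $\Hom$-spaces over $\AA$ vanish for $n\ge h$, so the resulting isomorphism $\HH\HH^n(\Lambda)\cong\HH\HH^n(A)\oplus\HH\HH^n(B)$ holds uniformly, which is exactly the claim.
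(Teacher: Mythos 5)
Your proposal is correct and follows exactly the paper's argument: the preceding theorem converts the absence of efficient cycles into $\MM^{\otimes_\AA h}=0$, and the second item of Corollary \ref{prodofcoho} then gives $\HH\HH^n(\Lambda)=\HH\HH^n(A)\oplus\HH\HH^n(B)$ for $n\geq h$. The extra tracing through the five-term sequences is sound but not needed beyond what the corollary already packages.
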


\begin{proof}
According to the previous result, there exists $h$ such that $\MM^{\otimes_\AA h}=0$, and in this situation Corollary \ref{prodofcoho} provides the result.\qed
\end{proof}

\begin{exam}
Let $A=kQ_A$ and $B=kQ_B$ be path algebras, where $Q_A$ and $Q_B$ are finite quivers, possibly with oriented cycles. We view these quivers as situated  in horizontal plans, up and down. A new quiver $Q$ is obtained by adding  chosen sets of vertical up and down arrows which join vertices of $Q_A$ and $Q_B$.  Let $K$ be the two sided ideal of $kQ$ generated by the paths containing two vertical arrows, and let $\Lambda=kQ/K$.

Suppose  there are no efficient cycles with respect to the $(Q_A)_0\cup (Q_B)_0$-Peirce quiver of $\Lambda$. Equivalently, suppose there are no cycles in $Q$ of the form  $\delta v \dots  v \delta v \delta v$ or $v \dots  v \delta v \delta v$ which have at least one vertical arrow, and where the $v$'s belong to the set of vertical arrows and the $\delta$'s to the set of paths of $Q_A$ or of $Q_B$. Therefore $\HH\HH^n(\Lambda) = 0$ for $n$ large enough.

Indeed, by Theorem \ref{noefficientcycles} there exists a positive integer $h$ such that $\HH\HH^n(\Lambda) = \HH\HH^n(A)\oplus\HH\HH^n(B)$ if $n\geq h$. Moreover, Hochschild cohomology of a path algebra vanishes in degrees larger or equal to $2$, see e.g. \cite[p. 98]{CIBILS1991}.
\end{exam}

\section{Examples}\label{examples}

In this section we will exhibit some examples of algebras whose Hochschild cohomology can be described using the results obtained in this article. The first family of examples concerns toupie algebras. The second family aims to extend the answers already found in \cite{BUCHWGREENMADSENSOLBERG}   to Happel's remark in \cite{HAPPEL}.

\subsection{ Toupie algebras}

This family of triangular algebras has the following description by generators and relations.
The quiver $Q$ is as follows:
\medskip $$\xymatrix@R=6pt@C=6pt{  & & \ar[lld] \ar[ld]\bullet\save[]+<0pt,8pt>*{0}\restore  \ar[dr]\restore  \ar@/^4pc/[ddddd] \ar@/^6pc/[ddddd]&  \\
 \!\! _{_{(1,1)}} \bullet & \!\!\!\!\!\!\!\!_{_{(2,1)}}\bullet  & \cdots  & \bullet & \\ \vdots & \vdots &\cdots &\vdots& \,\,\,\,\,\,\,\,\,\,\,\cdots \\ \ar[d] & \ar[d] & & \ar[d]\\\!\!\!\!\!_{_{(1,p_1)}}\bullet\ar[drr] &  \!\!\!\!\!_{{(2,p_2)}}\bullet \ar[dr] & \cdots & \bullet\ar[ld] & \\& & \bullet\save[]+<0pt,-8pt>*{\omega}\restore & & }$$      \vspace{.1 cm}\\

There are two kinds of relations: monomial ones, and linear combinations of branches. The dimensions of the Hochschild cohomology spaces of toupie algebras - which are zero for $n>>0$ - have been computed in \cite{GaLa}, while explicit bases of these spaces have been described in \cite{Art}.

In case the quiver has a branch $\alpha$ containing at least three arrows and monomial relations involving neither the first arrow of the branch nor the last one, or, it has a branch with a monomial relation from the source vertex  to the sink, we shall see now that the algebra fits into our context. For this, we fix some notation.
Let $R$ denote a minimal set generating the ideal of relations of our algebra. Set $Q'=Q\setminus \alpha$, and $R'$ a minimal set generating the ideal of relations of
$\langle R \rangle \cap kQ'$. The quiver $Q''$ corresponds to the branch $\alpha$ without the extremal arrows $\alpha_0$ and $\alpha_n$.
Its arrows, counting from the source to the sink, are $\alpha_1, \dots, \alpha_{n-1}$.
Let $R''$ be a minimal set generating the ideal of relations of $\langle R \rangle\cap kQ''$. Fix $A=kQ'/\langle R' \rangle$ and $B=kQ''/\langle R'' \rangle$ and vertical arrows $\alpha_0$ from $A$ to $B$ and $\alpha_n$ from $B$ to $A$, so that the quiver obtained is the original one.
A study of the results of the previous section show  that for $n\geq 2$, the Hochschild cohomology of the toupie algebra is the direct sum of the Hochschild cohomology spaces of
$A$ and $B$.

\subsection{Happel's remark}

\begin{defi}
A \emph{DH-algebra} is a finite dimensional $k$-algebra of infinite global dimension with Hochschild  cohomology vector spaces zero in large enough degrees.
\end{defi}

D. Happel remarked in \cite{HAPPEL} that no DH-algebra was known.  Let $Q_A$ be the quiver with one vertex $s$ and two loops $a$ and $b$. It is shown in \cite{BUCHWGREENMADSENSOLBERG} that whenever  $q\in k$ is not a root of unity and $q\neq 0$, then $kQ_A/\langle a^2, b^2, ba-qab\rangle$ is a DH-algebra.

Let $Q_B = {}_x\cdot \stackrel{h}{\rightarrow} \cdot_y$. We add to the disjoint union of $Q_A$ and $Q_B$ two arrows: an ``up" one $u$ from $x$ to $s$, and a ``down" one $v$ from $s$ to $y$, obtaining a quiver $Q$.
\[
\begin{tikzcd}[arrow style=tikz,>=stealth,row sep=4em]
 & s \arrow[out=110,in=170,loop,swap,"a"]
  \arrow[out=80,in=10,loop, "b"]
  \arrow[dr,shift right=.4ex,swap,"v"] &
 \\
x \arrow[ur,shift right=.4ex,swap,"u"] \arrow[rr,"h"] & & y
 &
\end{tikzcd}
\]
From the results in the previous section, it follows that for $q$ as above, the algebra
$$kQ/\langle a^2, b^2, ba-qab, vu, vau, vbu, vbau\rangle$$ is also a DH-algebra.

In order to construct other related DH-algebras, consider first $Q_A'$  the quiver obtained from $Q_A$ by adding an arrow $c$  from $s$ to a new vertex $t$. The algebra $kQ_A'/\langle a^2, b^2, ba-qab\rangle$ is still a DH-algebra. Let now $C$ be a finite quiver, $J$ an admissible two sided ideal of the path algebra $kC$, such that the Hochschild cohomology vector spaces of $kC/J$ are zero for large enough degrees. Suppose there are two vertices $x$ and $y$  of $C$ such that $x(kC/J)y=0$. Let $Q$ be the disjoint union of $Q_A'$ and $C$, together with an ``up" arrow from $x$ to $t$ and a ``down" arrow from $s$ to $y$. The results of the previous section show  that for $q\in k$ as above, $kQ/\langle a^2, b^2, ba-qab, J\rangle$ is a DH-algebra.

\[
\begin{tikzcd}[arrow style=tikz,>=stealth,row sep=4em]
t &\arrow[l,"c"] s \arrow[out=110,in=150,loop,swap,"a"]
  \arrow[out=50,in=10,loop, "b"]
  \arrow[d,swap,"v"]
 \\
x \arrow[u,swap,"u"]   & \arrow[l,dotted,"0"]y
\end{tikzcd}
\]

\footnotesize
\noindent C.C.:\\
Institut Montpelli\'{e}rain Alexander Grothendieck, CNRS, Univ. Montpellier, France.\\
{\tt Claude.Cibils@umontpellier.fr}

\medskip
\noindent M.L.:\\
Instituto de Matem\'atica y Estad\'\i stica  ``Rafael Laguardia'', Facultad de Ingenier\'\i a, Universidad de la Rep\'ublica, Uruguay.\\
{\tt marclan@fing.edu.uy}

\medskip
\noindent E.M.:\\
Departamento de Matem\'atica, IME-USP, Universidade de S\~ao Paulo, Brazil.\\
{\tt enmarcos@ime.usp.br}

\medskip
\noindent A.S.:
\\IMAS-CONICET y Departamento de Matem\'atica,
 Facultad de Ciencias Exactas y Naturales,\\
 Universidad de Buenos Aires, Argentina. \\{\tt asolotar@dm.uba.ar}

\end{document}